\theoremstyle{plain}
\newtheorem{theorem}{Theorem}[section]
\newtheorem{lemma}[theorem]{Lemma}
\newtheorem{coro}[theorem]{Corollary}
\newtheorem{prop}[theorem]{Proposition}
\newtheorem{defi}[theorem]{Definition}
\newtheorem*{ques*}{Question}	
\newtheorem*{conj*}{Conjecture}
\theoremstyle{remark}
\newtheorem{remark}[theorem]{Remark}
\numberwithin{equation}{section}
\newcommand{\QQ}{\mathbb{Q}}
\newcommand{\CC}{\mathbb{C}}
\newcommand{\PP}{\mathbb{P}}
\newcommand{\ZZ}{\mathbb{Z}}
\newcommand{\CE}{\mathcal{E}}
\newcommand{\CH}{\mathcal{H}}
\newcommand{\CK}{\mathcal{K}}
\newcommand{\CO}{\mathcal{O}}
\newcommand{\CV}{\mathcal{V}}
\newcommand{\Pic}{{\mathrm{Pic}}}
\newcommand{\baselocus}{{\mathrm{Bs}}}
\newcommand{\movable}{{\mathrm{Mov}}}
\newcommand{\coeff}{{\mathrm{coeff}}}
\newcommand{\supp}{{\mathrm{Supp}}}
\newcommand{\Sym}{{\mathrm{Sym}}}
\newcommand{\vol}{{\mathrm{vol}}}
\newcommand{\rc}{{\mathrm{can}}}
\newcommand{\rounddown}[1]{{\lfloor #1 \rfloor}}
\begin{document}

\title[Noether-Severi inequality and equality]{Noether-Severi inequality and equality for irregular threefolds of general type}

\dedicatory{Dedicated to Professor Zhijie Chen on the occasion of his 80th birthday}

\author{Yong Hu}
\author{Tong Zhang}
\date{\today}

\address[Y.H.]{School of Mathematical Sciences, Shanghai Jiao Tong University, 800 Dongchuan Road, Shanghai 200240, People's Republic of China}
\email{yonghu@sjtu.edu.cn}

\address[T.Z.]{School of Mathematical Sciences, Shanghai Key Laboratory of PMMP, East China Normal University, 500 Dongchuan Road, Shanghai 200241, People's Republic of China}
\email{tzhang@math.ecnu.edu.cn, mathtzhang@gmail.com}

\begin{abstract}
We prove the optimal Noether-Severi inequality that $\vol(X) \ge \frac{4}{3} \chi(\omega_{X})$ for all smooth and irregular $3$-folds $X$ of general type over $\CC$. 
For those $3$-folds $X$ attaining the equality, we completely describe their canonical models and show that the topological fundamental group $\pi_1(X) \simeq \ZZ^2$. As a corollary, we obtain for the same $X$ another optimal inequality that $\vol(X) \ge \frac{4}{3}h^0_a(X, K_X)$ where $h^0_a(X, K_X)$ stands for the continuous rank of $K_X$, and we show that $X$ attains this equality if and only if $\vol(X) = \frac{4}{3}\chi(\omega_{X})$.
\end{abstract}

\maketitle





\section{Introduction}

Throughout this paper, we work over the complex numbers $\CC$, and all varieties are projective.

\subsection{Motivation}
A major problem in algebraic geometry is to classify algebraic varieties. Concerning this problem for varieties of general type, one well-known strategy is  the geographical approach. That is, to find explicit relations among birational invariants of varieties first, and then to establish the fine classification based on these relations. 

Such a strategy has been proved to be a great success for algebraic surfaces. For smooth surfaces $S$ of general type, Noether \cite{Noether} proved that
\begin{equation} \label{eq: Noether2}
    \vol(S) \ge 2p_g(S) - 4,
\end{equation}
which is now known as the Noether inequality. The classification of surfaces attaining the equality was sketched in Enriques' book \cite{Enriques} and accomplished in detail afterwards by Horikawa \cite{Horikawa1}. However, the three dimensional analogue of the problem seems to be more challenging. By a recent result of J. Chen, M. Chen and C. Jiang \cite{Chen_Chen_Jiang}, the Noether inequality
\begin{equation} \label{eq: Noether3}
    \vol(X) \ge \frac{4}{3} p_g(X) - \frac{10}{3}
\end{equation}
holds for ``almost all" smooth $3$-folds $X$ of general type, but it is not known by far if \eqref{eq: Noether3} holds in general, and the classification of $3$-folds attaining the equality is still  open \cite[Question 1.4, 1.5]{Chen_Chen_Jiang}.

For an irregular variety $X$, the inequality between $\vol(X)$ and $\chi(\omega_{X})$ attracts more attention. A notable feature is that the ratio $\frac{\vol(X)}{\chi(\omega_{X})}$ is invariant under \'etale covers, thus it carries many geometric properties of $X$ itself. This type of inequalities plays the role of the Noether inequality as in the regular case, and is often referred as Severi type inequalities in the literature \cite{Severi,Pardini,Zhang,Barja,Lu_Zuo,Jiang}. Over the last decades, a number of fundamental Severi type inequalities have been established. For example, if $X$ is of maximal Albanese dimension, then the optimal inequality 
\begin{equation} \label{eq: Severi}
	\vol(X) \ge 2(\dim X)! \chi(\omega_{X})
\end{equation}
holds. This inequality was originally stated for surfaces by Severi \cite{Severi} and
finally proved by Pardini \cite{Pardini}. Later, it was generalized to arbitrary dimension by Barja \cite{Barja} and independently by the second named author \cite{Zhang}. Under some extra assumptions on $X$, this inequality has been improved by Lu and Zuo \cite{Lu_Zuo} for surfaces, and later by Barja, Pardini and Stoppino \cite{Barja_Pardini_Stoppino2} in higher dimensions. Moreover, surfaces of maximal Albanese dimension attaining the equality in \eqref{eq: Severi} have been explicitly characterized by Barja, Pardini and Stoppino \cite{Barja_Pardini_Stoppino_Severi_Line}, and independently by Lu and Zuo \cite{Lu_Zuo}.

For general irregular varieties, the optimal Severi type inequality is known only for surfaces. Let $S$ be a smooth and irregular surface of general type. Bombieri \cite{Bombieri} showed that the inequality
\begin{equation} \label{eq: Bombieri}
    \vol(S) \ge 2\chi(\omega_S)
\end{equation}
holds. In \cite{Horikawa5}, Horikawa completely described the structure of all irregular surfaces $S$ of general type attaining the equality. More concretely, the Albanese map of $S$ induces a fibration by curves of genus two, the canonical model of $S$ is a flat double cover over an elliptic ruled surface via its relative canonical map with respect to the Albanese fibration, and $\pi_1(S) \simeq \ZZ^2$.

By virtue of the importance of the geographical classification and also parallel to the Noether inequality problem, the following two questions arise naturally:
\begin{itemize}
    \item [(Q1)] What is the analogue of \eqref{eq: Bombieri} for general irregular varieties of general type of dimension $n \ge 3$ (See also \cite[\S1, Question (1)]{Jiang})?

    \item [(Q2)] Once we obtain the optimal $n$-dimensional inequality in (Q1), can we describe the geometry of varieties for which the equality hold?
\end{itemize}  

\subsection{Main results}
The first main result of this paper is to establish an optimal three dimensional version of \eqref{eq: Bombieri}, thus answers Question (Q1) in dimension three.

\begin{theorem}[Theorem \ref{thm: Noether-Severi}, \ref{thm: Cartier} and \ref{thm: fudamental group}] \label{thm: main1}
Let $X$ be a smooth and irregular $3$-fold of general type. Then we have the following optimal inequality
\begin{equation} \label{eq: main1 inequality}
	\vol(X) \ge \frac{4}{3} \chi(\omega_{X}).
\end{equation}
If the equality holds, then 
\begin{itemize}
	\item [(1)] $q(X)=1$, $h^2(X, \CO_X)=0$, and the general Albanese fiber of $X$ is a smooth surface $F$ with $\vol(F) = 1$ and $p_g(F)=2$;
	\item [(2)] all minimal models of $X$ are Gorenstein;
	\item [(3)] the topological fundamental group $\pi_1(X) \simeq \ZZ^2$.
\end{itemize}
\end{theorem}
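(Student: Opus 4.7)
The plan is to split on the Albanese dimension of $X$. Let $a\colon X \to A:=\mathrm{Alb}(X)$ be the Albanese map and write $q=q(X)\ge 1$. When $q=3$ (maximal Albanese dimension), the Severi inequality \eqref{eq: Severi} of Pardini/Barja/Zhang yields $\vol(X)\ge 12\chi(\omega_X)$, far stronger than \eqref{eq: main1 inequality} and precluding equality for irregular $X$ of general type. When $q=2$, the Albanese image is a surface, and combining a fiberwise argument over a general $1$-dimensional fiber with Bombieri's inequality \eqref{eq: Bombieri} for the image surface (together with a Severi-type inequality in the partial Albanese dimension $2$ case) should produce a bound strictly better than $4/3$, again ruling out equality. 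So the core of the proof is the case $q=1$.

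For $q=1$, after passing to a good minimal model $X'$ the Albanese map becomes an elliptic fibration $f\colon X'\to E$ whose general fiber $F$ is a smooth minimal surface of general type with $q(F)=0$. The key technical ingredient is a slope-type inequality for $f$ of the form
\[
K_{X'/E}^{\,3}\ \ge\ \lambda(F)\cdot\deg f_*\omega_{X'/E},
\]
where $\lambda(F)$ depends only on birational invariants of $F$, together with Noether's surface inequality $K_F^2\ge 2p_g(F)-4$ applied to $F$. Since $K_E=0$, one has $K_{X'/E}^3=\vol(X)$, and an elliptic-base calculation using Koll\'ar's decomposition of $Rf_*\omega_{X'}$ and nefness of its summands identifies $\chi(\omega_X)$ with $\deg f_*\omega_{X'/E}$ up to nonnegative error terms coming from $\deg R^1f_*\omega_{X'}$. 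Combining these yields \eqref{eq: main1 inequality}, and tracking simultaneous equality forces $K_F^2=1$ and $p_g(F)=2$ (so $F$ is a Horikawa surface on the Noether line), together with $h^2(X,\CO_X)=0$ from the vanishing of the $R^1$-error; this gives part~(1).

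For part~(2), I expect the equality to force the relative canonical map of the canonical model $X_c$ over $E$ to be a flat double cover onto a smooth $\PP^2$-bundle $V\to E$, in direct analogy with Horikawa's classical double-cover description of the Noether-line surfaces, now carried out in relative dimension two. Flatness together with smoothness of $V$ then implies $X_c$ is Gorenstein, hence so are all minimal models by flop invariance. For part~(3), this same description makes the fundamental group accessible: a general Horikawa fiber is simply connected, so a Nori-type surjectivity argument gives $\pi_1(X_c)\simeq\pi_1(V)\simeq\pi_1(E)\simeq\ZZ^2$, and invariance of $\pi_1$ under crepant resolutions of canonical threefolds transports this back to $X$. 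The main obstacle throughout is proving the slope inequality with the sharp constant $4/3$ and tight equality conditions, in particular controlling the $R^1f_*\omega_{X'}$ contribution on $E$ and excluding pathological singular fibers of $f$ that would otherwise break the equality analysis.
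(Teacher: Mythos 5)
Your proposal correctly identifies the shape of the answer in the crucial case (a fibration over an elliptic curve whose general fiber is a $(1,2)$-surface, i.e.\ a Horikawa surface with $K_F^2=1$, $p_g(F)=2$), but it has several genuine gaps. First, the case division by $q(X)$ is flawed: $q(X)=2$ does not imply the Albanese image is a surface (it can be a curve of genus $2$), and more generally $q\ge 2$ with Albanese dimension one is exactly where a $(1,2)$-surface fiber can still occur; your argument never addresses it. The correct dichotomy is whether the general Albanese fiber is a $(1,2)$-surface (if not, one already has the stronger bound $K_X^3\ge 2\chi(\omega_X)$ by earlier work of the authors), and for $(1,2)$-fibers over a base of genus $q\ge 1$ one needs the refined bound $K_X^3\ge \tfrac43\chi(\omega_X)+\tfrac23(q-1)$. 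Second, and most seriously, the ``slope-type inequality with sharp constant $4/3$'' that you name as the key ingredient \emph{is} the theorem: it is not available off the shelf, and your proposal offers no proof of it. The paper obtains it by analyzing the relative canonical map $X\dashrightarrow \PP_B(f_*\omega_X)$, bounding the degree of the image surface in projective space, exploiting the base-locus section $\Gamma$ (the horizontal base locus of $|K_F|$ in each fiber) through adjunction on a general member of the movable part, and then removing a residual error term of $-\tfrac43$ by an \'etale base-change limit. None of these steps appears in your outline, and you acknowledge this is ``the main obstacle.''

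For part (2), your logic is reversed and the proposed structure is incorrect. Since $p_g(F)=2$, the relative canonical image is the $\PP^1$-bundle $\PP_B(f_*\omega_X)$, not a $\PP^2$-bundle; the actual description (which is the content of the classification theorem, not of the Gorenstein statement) is a flat double cover of a $\PP^1$-bundle over that $\PP^1$-bundle, and it only exists after blowing up the base-locus section $\Gamma$ and after already knowing that $X$ is Gorenstein and that $|K_X+a^*T|$ has base locus exactly $\Gamma$. The paper proves Gorenstein-ness independently and first, via Reid's plurigenus formula $h^0(2K_X)=\tfrac12 K_X^3+3\chi(\omega_X)+l_2(X)$: one computes $h^0(2K_X+2a^*T)$ a second way by restricting to a general member of the movable part of $|K_X+a^*T|$ and derives a contradiction from $l_2(X)>0$. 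Deriving Gorenstein-ness from the double-cover picture, as you propose, is circular. Part (3) is the right idea in spirit (the fibration has a section, hence no multiple fibers; the fiber is simply connected by Horikawa; Xiao's exact sequence then gives $\pi_1(X)\simeq\pi_1(B)\simeq\ZZ^2$), and this does not actually require the double-cover structure you invoke --- only the existence of the section $\Gamma$, which comes from the base point of $|K_F|$.
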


As the title suggests, we call \eqref{eq: main1 inequality} the \emph{Noether-Severi inequality}, because it is an analogue of the Noether inequality \eqref{eq: Noether3} and also of Severi type. Note that by \cite[Remark 3.6]{Hu_Zhang}, for any integer $e > 0$, there exists a smooth and irregular $3$-fold $X$ of general type with $\vol(X) = 4e$ and $\chi(\omega_X) = 3e$.

Previously, \eqref{eq: main1 inequality} was proved by the first named author \cite{Hu} under the extra assumption that $X$ has a Gorenstein minimal model. Unfortunately, the method therein does not work in the general setting. Here in Theorem \ref{thm: main1}, we not only establish \eqref{eq: main1 inequality} in general, but also provide basic properties of the equality case in \eqref{eq: main1 inequality}. An upshot in the characterization is that, \emph{all minimal models of $X$ are Gorenstein}. This is a bit unexpected, at least to us. It suggests that the relation among birational invariants of a $3$-fold may put constraints on singularities on its minimal models, which is not seen in the surface case. It is also a crucial ingredient for us to obtain the explicit structure of $X$ attaining the equality in \eqref{eq: main1 inequality}.

\begin{defi} \label{def: NS line}
    A minimal and irregular $3$-fold $V$ of general type is \emph{on the Noether-Severi line}, if $K_V^3 = \frac{4}{3} \chi(\omega_V)$.
\end{defi}


As the second main result of this paper, we give a complete description of canonical models of all irregular $3$-folds of general type on the Noether-Severi line, thus answer Question (Q2) in dimension three.

\begin{theorem} [Theorem \ref{thm: classification}] \label{thm: main2}
Let $X$ be the canonical model of a minimal and irregular $3$-fold of general type on the Noether-Severi line, with its Albanese fibration $a: X \to B$, where $B$ is a smooth curve of genus one (see Theorem \ref{thm: main1} (1)). Let $X'$ be the blow-up of $X$ along the base-locus section $\Gamma$ of $a$ (see Definition \ref{def: section}). Then the Albanese fibration $a': X' \to B$ of $X'$ is factorized as
$$
a': X' \stackrel{f} \longrightarrow Y \stackrel{q} \longrightarrow S \stackrel{p} \longrightarrow B
$$
with the following properties:
\begin{itemize}
	\item [(1)] $S = \PP_B(a_* \omega_{X}) $ is a $\PP^1$-bundle over $B$ with $p$ the projection.
	
	\item [(2)] $Y = \PP_S (\CO_S \oplus (\CO_S(-2) \otimes \CK_1^{2}))$ is a $\PP^1$-bundle over $S$ with $q$ the projection. Here $\gamma: B \to X$ denotes the section $\Gamma$, and $\CK_1 = p^*(\gamma^*\omega_X)$.
	
	\item [(3)] $f: X' \to Y$ is a flat double cover with the branch locus 
	$$
	D = D_1 + D_2,
	$$ 
	where $D_1 \in |\CO_Y(1)|$, $D_2 \in |\CO_Y(5) \otimes q^* (\CO_S(10) \otimes \CK_1^{-4} \otimes \CK_2^{-2})|$ and $D_1 \cap D_2 = \emptyset$.	Here $\CK_2 = p^*(\det a_* \omega_X)$.	
\end{itemize}

In one word, $X$ is a divisorial contraction of a double cover over a two-tower $\PP^1$-bundle over a smooth curve of genus one.
\end{theorem}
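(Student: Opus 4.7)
The plan is to globalize over the Albanese base $B$ the well-understood structure of a Horikawa surface with $p_g=2$ and $K^2=1$. By Theorem \ref{thm: main1}(1) every general Albanese fibre $F$ is such a surface; $|K_F|$ has a single base point, and after blowing up that point one gets a pencil of genus two curves whose relative canonical map is a two-to-one cover onto $\PP^1$. The tower $X' \to Y \to S \to B$ in the statement will simply record, in families over $B$, these two successive canonical maps together with the blow-up of the Horikawa base point.

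I would first build $S$. Since $B$ is a smooth curve and the general fibre satisfies $p_g(F)=2$, the sheaf $a_*\omega_X$ is locally free of rank two. Set $S = \PP_B(a_*\omega_X)$ with projection $p$ and tautological $\CO_S(1)$. The relative canonical map $X \dashrightarrow S$ over $B$ has for base locus the section $\Gamma$ of $a$ assembled from the single base point of each $|K_F|$. Blowing up $\Gamma$ produces $X'$ and a morphism $g = q\circ f : X' \to S$ whose generic fibre is a smooth genus two curve. I would then take $Y = \PP_S(g_*\omega_{X'/S})$, obtaining the factorization $X' \xrightarrow{f} Y \xrightarrow{q} S$ as the relative canonical map of $g$. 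The identification of $Y$ with $\PP_S(\CO_S \oplus (\CO_S(-2) \otimes \CK_1^{2}))$ as in (2) comes from computing $g_*\omega_{X'/S}$ by relative duality on the genus two fibration $g$, combined with the blow-up formula for $X' \to X$ along $\Gamma$: the twist $\CO_S(-2)$ records the canonical degree along fibres of $p$, and the factor $\CK_1^{2}$ records the discrepancy contribution of the exceptional divisor $E$ of $X'\to X$.

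To finish, I would identify $f$ as a flat double cover and pin down its branch divisor $D$. Flatness follows from $f$ being finite of degree two between Cohen--Macaulay varieties of the same dimension, the Cohen--Macaulayness of $X'$ being inherited from the Gorenstein minimal model given by Theorem \ref{thm: main1}(2). Writing $f_*\CO_{X'} = \CO_Y \oplus \CL^{-1}$ with $D \in |\CL^{\otimes 2}|$, I would locate $D_1$ as the image $f(E)$: since $E$ is a fixed component of the relative canonical map of $g$, its image is a section of $q$, and intersection with a generic fibre of $q$ pins $f(E)$ in $|\CO_Y(1)|$. The class of $\CL$, and thus of $D_2 = D - D_1$, is then forced by combining $K_{X'} = f^*(K_Y + \CL)$ with the Noether--Severi equality $K_X^3 = \frac{4}{3}\chi(\omega_X)$; a routine bookkeeping in $\Pic(Y)$ yields the displayed class for $D_2$. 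The main obstacle I anticipate will be the disjointness $D_1 \cap D_2 = \emptyset$: any intersection would produce a double cover singularity on $X'$ that, contracted back to a minimal model, would become a non-Gorenstein canonical threefold singularity, contradicting Theorem \ref{thm: main1}(2). Ruling this out requires a local analytic case check at every possible type of intersection, using the classification of Gorenstein canonical threefold singularities, and this is precisely where the Gorenstein conclusion of Theorem \ref{thm: main1}(2) becomes truly indispensable.
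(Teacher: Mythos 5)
Your overall skeleton --- $S=\PP_B(a_*\omega_X)$, blow up $\Gamma$, take the relative canonical model of the resulting genus-two fibration over $S$ to get $Y$, and read off the branch divisor from $K_{X'}=f^*(K_Y+L)$ --- matches the paper. But two of the steps you treat as routine are exactly where the real work lies, and one of them is argued incorrectly.

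First, the disjointness $D_1\cap D_2=\emptyset$. You propose to rule out an intersection by saying it would create a double-cover singularity on $X'$ that contradicts the Gorenstein conclusion of Theorem \ref{thm: main1}(2). This does not work: a transverse intersection of $D_1$ and $D_2$ along a curve produces, on the double cover, a curve of $A_1$-points (locally $z^2=xy$), which is a Gorenstein \emph{canonical} singularity and is entirely compatible with $X$ being a canonical model with Gorenstein minimal models. No singularity-theoretic contradiction is available here. The paper's actual argument (Lemma \ref{lem: D'}) is numerical: using the equality case of Proposition \ref{prop: Noether (1,2)-surface}(1) one computes that $D_2|_{D_1}$ lies in a linear equivalence class of degree zero on the elliptic ruled surface $D_1\simeq S$ (Remark \ref{rmk: degree 0}); since $D_2|_{D_1}$ is effective and its class is trivial, it must vanish, so $D_1\cap D_2=\emptyset$. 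This disjointness is an output of the Noether--Severi numerology, not of the Gorenstein property.

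Second, the splitting in (2). You assert that $\PP_S(g_*\omega_{X'/S})\simeq\PP_S(\CO_S\oplus(\CO_S(-2)\otimes\CK_1^2))$ follows from ``relative duality plus the blow-up formula.'' What those tools give you is only the extension
$$
0 \to \CO_{S} \to \CE \to \CO_{S}(j^*E_Y) \to 0,
$$
and such an extension has no reason to split a priori. The paper's Lemma \ref{lem: split} proves the splitting by passing to $\Sym^5\CE$ and showing that non-splitting would force $E_Y$ to be a component of $D_2$ --- an argument that itself depends on the branch-divisor computation and on Lemma \ref{lem: D'}. So the splitting and the disjointness are intertwined, and neither is ``routine bookkeeping.'' Relatedly, the precise twist $\CO_S(-2)\otimes\CK_1^2$ requires knowing $(K_X\cdot\Gamma)=\frac{1}{3}\chi(\omega_X)$ (equality case of Proposition \ref{prop: Noether (1,2)-surface}(1)) and the identification $A_2=A_0$ (Lemma \ref{lem: A=T}), neither of which you address. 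A smaller but real omission: before $f$ can even be defined as a morphism you must show every fiber of $\psi$ is one-dimensional and integral (Lemma \ref{lem: flatness}, using ampleness of $K_X$ and the fact that $E_0$ is a section of $\psi$), so that $|\omega_C|$ is base point free on each fiber.
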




In contrast with the rich understanding of the surface classification, to the best of our knowledge, classifications of irregular $3$-folds $X$ of general type with prescribed birational invariants are \emph{quite rare}. The only known results are about those of maximal Albanese dimension with small $\chi(\omega_X)$, in which the generic vanishing theory is essential:
\begin{itemize}
    \item J. Chen, Debarre and Z. Jiang \cite{Chen_Debarre_Jiang} have obtained, up to \'etale covers, an explicit description of $3$-folds $X$ of maximal Albanese dimension and of general type with $\chi(\omega_X) = 0$. 

    \item Based on the work of Barja, Pardini and Stoppino \cite{Barja_Pardini_Stoppino}, Z. Jiang \cite{Jiang} completely described $3$-folds $X$ of maximal Albanese dimension with $\chi(\omega_X) = 1$ and (the smallest volume) $\vol(X) = 12$.
\end{itemize}
Theorem \ref{thm: main2} is the first explicit description so far of an unbounded family of irregular $3$-folds $X$ of general type with $\chi(\omega_X) > 0$, and it is based on a very detailed study of the relative canonical map of $X$ with respect to its Albanese fibration.


In a forthcoming paper \cite{Hu_Zhang_Noether}, we will adopt the idea in this paper to study $3$-folds on the Noether line, i.e., those attaining the equality in \eqref{eq: Noether3}.

\subsection{A corollary} Let $X$ be a smooth and irregular variety. Let 
$$
a: X \to A
$$
be the Albanese map of $X$, with $A$ the Albanese variety of $X$. For a divisor $L$ on $X$, its continuous rank $h^0_a(X, L)$ is defined as
$$
h^0_a(X, L): = \min \left\{h^0(X, \CO_X(L) \otimes a^* \alpha) | \alpha \in \Pic^0(A) \right\}.
$$
It has been discovered in the work of Barja, Pardini, Stoppino and Z. Jiang \cite{Barja,Barja_Pardini_Stoppino,Barja_Pardini_Stoppino2,Jiang} that when $h_a^0(X, L)>0$, the slope $\lambda (L):=\frac{\vol(L)}{h^0_a(X, L)}$ is closely related to the geometry of $X$, especially when $L = K_X$. From this perspective, finding the optimal lower or upper bound of $\lambda(K_X)$ becomes an important problem.

When $X$ is of maximal Albanese dimension, by the Severi inequality \eqref{eq: Severi}, we have the optimal lower bound $\lambda(K_X) \ge 2 (\dim X)!$. For general irregular varieties, the corresponding optimal lower bound is known only in dimension two. More precisely, let $S$ be a smooth and irregular surface of general type. Then we have the following optimal lower bound $\lambda(K_S) \ge 2$, i.e., 
\begin{equation} \label{eq: Severi type2}
    \vol(S) \ge 2h^0_a(S, K_S),
\end{equation}
which is an easy consequence of the Noether inequality \eqref{eq: Noether2}. Note that $h^0_a(S, K_S) \ge \chi(\omega_S)$ and that $\chi(\omega_S) = p_g(S) \ge h^0_a(S, K_S)$ when $q(S) = 1$. It follows that $\vol(S) = 2h^0_a(S, K_S)$ if and only if $\vol(S) = 2 \chi(\omega_S)$. Thus by \cite{Horikawa5}, surfaces attaining the equality in \eqref{eq: Severi type2} can be fully characterized. 

Based on Theorem \ref{thm: main1} and \ref{thm: main2}, we establish the optimal lower bound of $\lambda(K_X)$ in dimension three and completely describe the equality case.
\begin{coro}[Corollary \ref{coro: Severi type}] \label{coro: main3} 
Let $X$ be a minimal and irregular $3$-fold of general type. Then we have the following optimal inequality 
\begin{equation} \label{eq: Severi type}
	\vol(X) \ge \frac{4}{3} h^0_a(X, K_X).
\end{equation}
Moreover, the equality holds if and only if $X$ is on the Noether-Severi line.
\end{coro}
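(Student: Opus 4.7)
The plan is to deduce Corollary \ref{coro: main3} from Theorems \ref{thm: main1} and \ref{thm: main2} by comparing $h^0_a(X, K_X)$ with $\chi(\omega_X)$ via Hacon's generic vanishing. Let $a: X \to A$ be the Albanese map. By Hacon's theorem each $R^i a_*\omega_X$ is a GV-sheaf on $A$, so for generic $\alpha \in \Pic^0(A)$ all higher cohomologies $H^j(A, R^i a_*\omega_X \otimes \alpha)$ vanish for $j > 0$. Upper semi-continuity combined with Koll\'ar's decomposition $Ra_*\omega_X \cong \bigoplus_i R^i a_*\omega_X[-i]$ yields
\begin{equation*}
  h^0_a(X, K_X) = \chi(A, a_*\omega_X), \quad h^0_a(X, K_X) - \chi(\omega_X) = \sum_{i \ge 1}(-1)^{i+1}\chi(A, R^i a_*\omega_X),
\end{equation*}
with each summand on the right nonnegative by GV. In the case $q(X) = 1$ the Albanese target is an elliptic curve $B$ and relative Serre duality gives $R^2 a_*\omega_X \cong \CO_B$, so this collapses to $h^0_a(X, K_X) = \chi(\omega_X) + \chi(B, R^1 a_*\omega_X) \ge \chi(\omega_X)$.

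For the inequality $\vol(X) \ge \tfrac{4}{3}h^0_a(X, K_X)$: if $X$ has maximal Albanese dimension then $R^i a_*\omega_X = 0$ for $i \ge 1$, so $h^0_a = \chi(\omega_X)$ and the Severi inequality \eqref{eq: Severi} gives the much stronger $\vol(X) \ge 12\chi(\omega_X)$. The essential case is $q(X) = 1$; here I would revisit the slope-type computations underlying Theorem \ref{thm: main1} and observe that they directly bound $\vol(X)$ from below by $\tfrac{4}{3}\deg(a_*\omega_X) = \tfrac{4}{3}h^0_a(X, K_X)$, with Theorem \ref{thm: main1} itself being the weaker shadow obtained via $h^0_a(X,K_X) \ge \chi(\omega_X)$. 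Intermediate Albanese dimension $q(X) \ge 2$ can be absorbed by sharper Severi-type bounds.

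For the equality characterization, the direct implication uses Theorems \ref{thm: main1} and \ref{thm: main2}: if $X$ is on the Noether-Severi line then Theorem \ref{thm: main1} gives $q(X) = 1$, $h^2(\CO_X) = 0$, and a general Albanese fiber $F$ with $p_g(F) = 2$ and $\vol(F) = 1$; the very explicit structure of $F$ from Theorem \ref{thm: main2} (a double cover of a Hirzebruch-type rational surface) forces $q(F) = 0$, so $R^1 a_*\omega_X = 0$ and $h^0_a(X, K_X) = \chi(\omega_X)$, yielding $\vol(X) = \tfrac{4}{3}h^0_a(X, K_X)$. Conversely, if $\vol(X) = \tfrac{4}{3}h^0_a(X, K_X)$, the sharpness of the refined slope estimate above forces $\chi(B, R^1 a_*\omega_X) = 0$ and $\vol(X) = \tfrac{4}{3}\chi(\omega_X)$, placing $X$ on the Noether-Severi line. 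The main obstacle is justifying that the slope-type estimate underlying Theorem \ref{thm: main1} really bounds $\vol(X)$ below by $\tfrac{4}{3}\deg(a_*\omega_X) = \tfrac{4}{3}h^0_a(X, K_X)$, rather than only by the weaker $\tfrac{4}{3}\chi(\omega_X)$---equivalently, that the nonnegative slack term $\tfrac{4}{3}\chi(B, R^1 a_*\omega_X)$ can be absorbed into a refined version of that estimate.
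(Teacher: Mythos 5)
Your proposal does not close the gap you yourself flag as ``the main obstacle,'' and that obstacle is in fact a misdirection. The refined estimate you posit --- that the slope computation behind Theorem \ref{thm: main1} bounds $\vol(X)$ below by $\frac{4}{3}\deg(a_*\omega_X)$, with Theorem \ref{thm: main1} only a ``weaker shadow'' --- is neither proved in the paper nor needed. The key fact you miss is that in the only situation where the constant $\frac{4}{3}$ is operative, namely when the general Albanese fiber $F$ is a $(1,2)$-surface, one automatically has $q(F)=0$, hence $R^1f_*\omega_X=0$ by Koll\'ar (here $f:X\to B$ is the Albanese fibration), and therefore $h^0_a(X,K_X)=\chi(f_*\omega_X)=\chi(\omega_X)-\chi(\omega_B)=\chi(\omega_X)-(g(B)-1)\le\chi(\omega_X)$. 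So the desired inequality is \emph{weaker} than Theorem \ref{thm: main1} in this case and follows from it at once. Your comparison $h^0_a(X,K_X)\ge\chi(\omega_X)$ points the wrong way here: the term $\chi(R^1)$ you lean on vanishes, while the term $-\chi(R^2f_*\omega_X)=-(g(B)-1)$, which you treat as zero, is the one that matters when $g(B)\ge 2$ (your claim that every summand $(-1)^{i+1}\chi(R^if_*\omega_X)$ is nonnegative fails for $i=2$). Relatedly, ``intermediate Albanese dimension $q(X)\ge 2$'' conflates irregularity with Albanese dimension: a $3$-fold with $q(X)\ge 2$ can still have Albanese dimension one.

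The genuinely nontrivial cases are exactly the ones you wave at. For Albanese dimension $\ge 2$ the paper quotes Barja's unconditional $K_X^3\ge 4h^0_a(X,K_X)$, not a Severi-type refinement. For Albanese dimension one with $F$ \emph{not} a $(1,2)$-surface, $R^1f_*\omega_X$ need not vanish and $h^0_a(X,K_X)=\chi(f_*\omega_X)$ can exceed $\chi(\omega_X)$; there the paper invokes the relative inequality $K_{X/B}^3\ge 2\deg f_*\omega_{X/B}$ from \cite{Hu_Zhang} together with $3K_F^2\ge p_g(F)$ to obtain the strictly stronger $K_X^3\ge 2h^0_a(X,K_X)$. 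With those two strict improvements, equality in \eqref{eq: Severi type} forces the $(1,2)$-fiber case, where $\frac{4}{3}\chi(\omega_X)\ge\frac{4}{3}h^0_a(X,K_X)$ collapses to equality exactly when $g(B)=1$, giving $K_X^3=\frac{4}{3}\chi(\omega_X)$; conversely, Theorem \ref{thm: main1} alone (no need for Theorem \ref{thm: main2}) gives $q(X)=1$ and a $(1,2)$-fiber, hence $h^0_a(X,K_X)=\chi(\omega_X)$ and equality. As written, your converse direction rests entirely on the unestablished refined estimate, so the argument has a genuine hole.
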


Prior to \eqref{eq: Severi type}, the best unconditional result was proved by Barja \cite{Barja} stating that $\vol(X) \ge h^0_a(X, K_X)$ for irregular $3$-folds $X$ of general type. Thus \eqref{eq: Severi type} improves Barja's result. Note that Corollary \ref{coro: main3} offers more. Notably, it shows that Theorem \ref{thm: main2} also gives a complete description of canonical models of those $X$ attaining the equality in \eqref{eq: Severi type}.

\subsection{Outline} The paper is organized as follows. In \S  \ref{section: geometry}, we study the relative canonical map of minimal $3$-folds fibered by $(1, 2)$-surfaces. The key results in this part are Proposition \ref{prop: Noether (1,2)-surface} and \ref{prop: equality} which not only establish some basic numerical inequalities for these $3$-folds but also describe the equality case. In \S \ref{section: Noether-Severi inequality}, based on Proposition \ref{prop: Noether (1,2)-surface}, we prove a part of Theorem \ref{thm: main1}, from which it is sufficient to deduce Corollary \ref{coro: main3}. In \S \ref{section: property}, based on Proposition \ref{prop: equality}, we complete the proof of Theorem \ref{thm: main1}. Moreover, we obtain some very explicit properties about the relative canonical map of minimal $3$-folds on the Noether-Severi line (see Theorem \ref{thm: |K_X|}). Finally, using these properties, we prove Theorem \ref{thm: main2} in \S \ref{section: classification}.

\subsection{Notations and Preliminary} \label{subsection: notation}
In this paper, we adopt the following notation and definitions.

\subsubsection*{Varieties and divisors} Let $V$ be a normal variety of dimension $d$. The \emph{geometric genus} $p_g(V)$ of $V$ is defined as
$$
p_g(V):=h^0(V, K_V).
$$
For a divisor $L$ on $V$, the \emph{volume} $\vol(L)$ of $L$ is defined as
$$
\vol(L) := \limsup\limits_{n \to \infty} \frac{h^0(V, nL)}{n^d/d!},
$$
The volume $\vol(K_V)$ is called the \emph{canonical volume} of $V$, and is denoted by $\vol(V)$. We say that $V$ is \emph{minimal}, if $V$ has at worst $\QQ$-factorial terminal singularity and $K_V$ is nef. If $V$ has at worst canonical singularities and $\vol(V) > 0$, we say that $V$ is of general type. If $V$ is of general type and $K_V$ is ample, we say that $V$ is a canonical model. Note that if $V$ is minimal or a canonical model, then $\vol(V) = K_V^d$.

For a linear system $\Lambda$, we denote by $\movable \Lambda$ and $\baselocus \Lambda$ the movable part and the base locus of $\Lambda$, respectively.

By an \emph{$(a, b)$-surface}, we mean a normal surface $S$ having at worst canonical singularities with $\vol(S) = a$ and $p_g(S) = b$. 

\subsubsection*{Irregular varieties} Let $V$ be a normal variety with at worst rational singularities. We say that $V$ is \emph{irregular}, if $q(V) := h^1(V, \CO_V) > 0$. Note that $V$ has a well-defined \emph{Albanese map}
$$
a: V \to \mathrm{Alb}(V),
$$
where $A:=\mathrm{Alb}(V)$ is an abelian variety referred as the Albanese variety of $V$. Let $V \stackrel{f}\rightarrow W \stackrel{g}\rightarrow A$ be the Stein factorization of $a$. Then $f$ is called the \emph{Albanese fibration} of $V$, and a general fiber of $f$ is called an \emph{Albanese fiber}. If $\phi: V' \to V$ is a resolution of singularities of $V$ and $a': V' \to \mathrm{Alb}(V')$ is the Albanese map of $V'$, then $\mathrm{Alb}(V') \cong \mathrm{Alb}(V)$ and $a' = a \circ \phi$. We refer the reader to \cite[\S 2.4]{Beltrametti_Sommese} for details.

\subsection{Birational modification with respect to linear systems} \label{section: modification}
Let $X$ be a minimal $3$-fold of general type. Let 
$$
\alpha: X_0 \to X
$$ 
be a resolution of singularities of $X$, where $X_0$ is smooth. We may further assume that $\alpha$ is an isomorphism over the smooth locus of $X$.
Let $L$ be a Weil divisor on $X$ with $h^0(X, L) \ge 2$. Since $L$ is $\QQ$-Cartier, we have $h^0(X_0, \rounddown{\alpha^*L}) = h^0(X, L) \ge 2$. Let $|M_0|=\movable|\rounddown{\alpha^*L}|$ be the movable part of $|\rounddown{\alpha^*L}|$. By Hironaka's big theorem, we may resolve the base locus of $|M_0|$ by taking successive blow-ups as follows:
$$
\beta \colon X'=X_{n+1}\stackrel{\pi_n}\rightarrow X_{n}\rightarrow \cdots \rightarrow X_{i+1} \stackrel{\pi_{i}}\rightarrow X_{i}\rightarrow \cdots \rightarrow X_1\stackrel{\pi_0}\rightarrow X_0.
$$
Here each $\pi_i$ is a blow-up along a nonsingular center $W_i$ contained in the base locus of $\mathrm{Mov}|(\pi_0\circ \pi_1 \circ \cdots \circ \pi_{i-1})^*M_0|$. As a result, the linear system $|M|=\mathrm{Mov}|\beta^* M_0|$ is base point free. Set 
$$
\pi: = \alpha \circ \beta: X'\to X.
$$
Such a birational modification will be used throughout this paper.

\subsection*{Acknowledgement}

Y.H. would like to thank Professors Meng Chen and Yongnam Lee for their hospitality and support. Both authors are grateful to Professors Jin-Xing Cai, Jungkai Alfred Chen and Jun Lu for stimulating questions and fruitful discussions. Both authors also would like to thank the anonymous referee sincerely for valuable comments which helped to improve the presentation of the paper dramatically.

Y.H. is supported by a KIAS Individual Grant (MP 062501)
at Korea Institute for Advanced Study and the Shanghai Pujiang Program Grant No. 21PJ1405200. T.Z. is supported by the Science and Technology Commission of Shanghai Municipality (STCSM) Grant No. 18dz2271000 and the National Natural Science Foundation of China (NSFC) General Grant No. 12071139.

\section{Geometry of $3$-folds fibered by $(1, 2)$-surfaces} \label{section: geometry}

Throughout this section, we always assume that $X$ is a minimal $3$-fold of general type with $p_g(X) \ge 4$ and with a fibration 
$$
f: X \to B
$$ 
over a smooth curve $B$ such that the general fiber $F$ of $f$ is a $(1, 2)$-surface.

\subsection{Setting} \label{subsection: setting (1,2)-surface}
By the assumption, $F$ is a minimal $(1, 2)$-surface. Thus $|K_F|$ has a unique base point. Therefore, the horizontal part of the base locus of the relative canonical map of $X$ with respect to $f$ is a natural section $\Gamma$ whose intersection $\Gamma \cap F$ with $F$ is just the base point of $|K_F|$. Since $\Gamma$ is a section of $f$, we have $\Gamma \cong B$. Thus $\Gamma$ is smooth.

Let $T$ be an effective divisor on $B$ with $t = \deg T \ge 0$. Taking a birational modification $\pi: X' \to X$ as in \S \ref{section: modification} with respect to the linear system $|K_X+f^*T|$, we may write
\begin{equation} \label{eq: modification}
\pi^*(K_X+f^*T) = M + Z,
\end{equation}
where $|M| = \movable|\rounddown{\pi^*(K_X+f^*T)} |$  is base point free and $Z$ is an effective $\QQ$-divisor. Note that $X$ has at worst terminal singularities. Thus 
\begin{equation} \label{eq: exceptional divisor}
K_{X'} = \pi^*K_X + E_{\pi},
\end{equation}
where $E_{\pi}$ is an effective $\pi$-exceptional $\QQ$-divisor.

Let $\phi_{K_X + f^*T}: X \dashrightarrow \PP^{h^0(X, K_X+f^*T)-1}$ be the rational map of $X$ induced by $|K_X + f^*T|$, with the image $\Sigma$. Then we have the following commutative diagram
$$
\xymatrix{
& & X' \ar[d]_{\pi} \ar[lld]_{f'} \ar[rr]^{\psi} \ar[drr]^{\phi_{M}} & &  \Sigma' \ar[d]^{\tau}  \\
B & & X  \ar@{-->}[rr]_{\phi_{K_X + f^*T}} \ar[ll]^f  & & \Sigma         
}
$$
where $\phi_M: X' \to \Sigma$ is the morphism induced by $|M|$, $X' \stackrel {\psi}\rightarrow \Sigma' \stackrel{\tau} \rightarrow \Sigma$ is the Stein factorization of $\phi_M$, and $f' = f \circ \pi$ is the fibration of $X'$. Denote by $F'$ a general fiber of $f'$. Since $p_g (F)=2$, we have $\dim \phi_M (F') \le 1$. Thus $\dim \Sigma \le 2$. 

\begin{remark} \label{rmk: T}
We would like to point out that the varieties as well as the maps in above diagram may vary when the divisor $T$ varies. However, for simplicity, we will use the same set of notation as above as a universal one. For the convenience of the reader, each time we will specify clearly which $T$ we use in the corresponding setting. In particular, by $T$ being sufficiently ample, we mean that $h^0(X, K_X+f^*T)\ge g(B)+2$ and $\phi_{K_X + f^*T}$ is the relative canonical map $X \dashrightarrow \PP_B(f_* \omega_{X/B})$ of $X$ with respect to $f$.
\end{remark}

\subsection{The case when $\dim \Sigma = 2$} \label{subsection: Noether (1,2)-surface}
In this subsection, we always assume that $\dim \Sigma =2$, and we have no a priori restriction on $T$. In fact, when $T$ is sufficiently ample, we always have $\dim \Sigma = 2$. 

Let $C$ be a general fiber of $\psi$. Then we have
$$
M^2 \equiv dC,
$$
where $d := (\deg \tau) \cdot (\deg \Sigma)$. Since $\Sigma \subset \PP^{h^0(X, K_X+f^*T) - 1}$ is a non-degenerate surface, we have $\deg \Sigma \ge h^0(X, K_X + f^*T) - 2$.

\begin{lemma} \label{lem: g2}
We have $|M||_{F'} = \movable |(\pi|_{F'})^*K_F|$, where $\pi|_{F'}: F' \to F$ coincides with the blow-up of the unique base point of $|K_F|$. In particular, $g(C)=2$ and $f'$ factors through $\psi$ birationally (i.e., there is a rational map $p_{\Sigma'}: \Sigma'\dashrightarrow B$ such that $f'=p_{\Sigma'}\circ\psi$).
\end{lemma}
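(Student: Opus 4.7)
The plan is to exploit the restriction of $|M|$ to a general fiber $F'$ of $f'$. Since $X$ has terminal singularities, its singular locus is a finite set of points, and a general fiber $F$ avoids it; together with $f^{*}T|_{F}=0$ this gives $\pi^{*}(K_{X}+f^{*}T)|_{F'} = (\pi|_{F'})^{*}K_{F}$. Restricting the decomposition \eqref{eq: modification} to a general $F'$ yields
$$
M|_{F'} + Z|_{F'} \sim (\pi|_{F'})^{*}K_{F}
$$
with $Z|_{F'}$ effective, so $|M||_{F'}$ is a base-point-free sublinear system of $\movable|(\pi|_{F'})^{*}K_{F}|$.

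Next I would compare dimensions. Since $F$ is a $(1,2)$-surface, $|K_{F}|$ is a pencil with a single base point $p$, so $\movable|(\pi|_{F'})^{*}K_{F}|$ is at most a pencil on $F'$. On the other hand, the hypothesis $\dim\Sigma=2$ forces $\phi_{M}(F')$ to be a curve: it cannot be a point, else $\Sigma$ would be swept out by the $1$-parameter family $\{F'\}_{b\in B}$ of points and $\dim\Sigma\le 1$; it cannot be a surface, since $|M||_{F'}$ is contained in a pencil. Hence $\dim|M||_{F'}\ge 1$, and the two sublinear systems coincide. The base-point-freeness of $|M||_{F'}$ then forces $\pi|_{F'}$ to resolve $p$; since blowing up $p$ alone already renders $\movable|(\pi|_{F'})^{*}K_{F}|$ base-point-free and the centers of the blow-ups constituting $\pi$ lie in the evolving base locus (see \S\ref{section: modification}), $\pi|_{F'}$ coincides with $\mathrm{Bl}_{p}F\to F$.

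For the genus computation, adjunction gives $2g(D)-2 = 2K_{F}^{2} = 2$ for a general $D\in|K_{F}|$, so $g(D)=2$. Horikawa's analysis of $(1,2)$-surfaces shows $p$ is a simple base point, so the strict transform $\widetilde D$ of $D$ on $F'$ is isomorphic to $D$ and is a general element of the pencil $|M||_{F'}$, in particular a general fiber of $\psi|_{F'}$.

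Finally, for the birational factorization $f'=p_{\Sigma'}\circ\psi$, I would note that the images $\psi(F')\subset\Sigma'$ must vary with $b\in B$ (otherwise $\phi_{M}(X')$ would lie in a single curve, contradicting $\dim\Sigma=2$). This $1$-parameter family of irreducible curves produces a rational map $p_{\Sigma'}\colon\Sigma'\dashrightarrow B$ with $f'=p_{\Sigma'}\circ\psi$. A general fiber of $\psi$ then lies in a single $F'$ and therefore coincides with a general fiber of $\psi|_{F'}$, yielding $g(C)=2$. The only delicate point I anticipate is the clean identification of $\pi|_{F'}$ with a single blow-up; this is handled by the minimality built into the base-locus resolution procedure in \S\ref{section: modification}.
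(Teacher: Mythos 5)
Your proposal is correct and follows essentially the same route as the paper: both hinge on the facts that $\phi_M(F')$ is a curve, that $h^0(F',(\pi|_{F'})^*K_F)=h^0(F,K_F)=2$ forces $|M||_{F'}$ to be the full movable part, that the modification $\pi$ restricts on a general fiber to the single blow-up of the base point of $|K_F|$, and that the resulting genus-$2$ pencil members are the fibers of $\psi$, giving $(F'\cdot C)=0$ and the birational factorization. The only difference is cosmetic ordering — the paper first deduces surjectivity of $H^0(X,K_X+f^*T)\to H^0(F,K_F)$ and then identifies $\pi|_{F'}$, while you run the dimension count first — which is logically equivalent.
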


\begin{proof}
Since $p_g(F)=2$ and $\phi_{M}(F')$ is a curve, the restriction map
$$
H^0(X, K_X+f^*T)\to H^0(F, K_F)
$$
is surjective. Thus the horizontal part of $\baselocus|K_X+f^*T|$ with respect to $f$ is just the section $\Gamma$. From the construction of $\pi$, we see that $\pi|_{F'}: F' \to F$ is just the blow-up of the unique base point of $|K_F|$. It follows that $|M||_{F'} = \movable |(\pi|_{F'})^*K_F|$. Note that $\movable |(\pi|_{F'})^*K_F|$ is a rational pencil of curves of genus two. We deduce that $g(C)=2$. Moreover, $(F' \cdot C)=0$. Thus $f'$ factors birationally through $\psi$.
\end{proof}

The following estimate will be used when we treat the canonical map or the relative canonical map of $X$.

\begin{lemma} \label{lem: d}
The following statements hold:
\begin{itemize}
	\item [(1)] If $T=0$, then we have
	$$
	d \ge \min \{2p_g(X)-4,\ p_g(X) + g(B) -2 \} \ge p_g(X) - 2.
	$$
	\item [(2)] If $g(B)>0$ and $T$ is sufficiently ample as in Remark \ref{rmk: T}, then we have 
	$$
	\deg \Sigma \ge \max \{h^0(X, K_X + f^*T), h^0(X, K_X+f^*T) + g(B) - 2\}.
	$$
\end{itemize}

\end{lemma}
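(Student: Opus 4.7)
The plan is to prove the two parts separately, using that $\Sigma$ is non-degenerate together with (for part (1)) a Clifford-type analysis of a general hyperplane section of $\Sigma$ and (for part (2)) the explicit $\PP^1$-bundle structure of $\Sigma$.

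For part (1), I first use that $\Sigma \subset \PP^{p_g-1}$ is a non-degenerate surface, so $\deg \Sigma \ge p_g - 2$. I then split on $\deg \tau$. If $\deg \tau \ge 2$, then $d = (\deg \tau)(\deg \Sigma) \ge 2(p_g - 2) = 2p_g - 4$, which dominates the stated minimum. If $\deg \tau = 1$, then $d = \deg \Sigma$, and by Lemma \ref{lem: g2} $\Sigma$ is birationally ruled over $B$. A Bertini-general hyperplane section $C_H \subset \Sigma$ is then an irreducible non-degenerate curve in $\PP^{p_g-2}$ of degree $\deg \Sigma$, whose normalization $\tilde C$ admits a surjective morphism to $B$; hence $g(\tilde C) \ge g(B)$. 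The embedding $\tilde C \to \PP^{p_g-2}$ comes from a linear subseries of projective dimension $p_g - 2$ in some complete system $|L|$ on $\tilde C$ with $\deg L = \deg \Sigma$, and the standard dichotomy finishes the case: Riemann--Roch in the non-special case yields $\deg \Sigma \ge p_g + g(\tilde C) - 2 \ge p_g + g(B) - 2$, while Clifford's theorem in the special case yields $\deg \Sigma \ge 2(p_g - 2) = 2p_g - 4$. The chain $\min\{2p_g-4,\, p_g+g(B)-2\} \ge p_g - 2$ then follows from $p_g \ge 4$ and $g(B) \ge 0$.

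For part (2), the assumption that $T$ is sufficiently ample forces $\phi_{K_X + f^*T}$ to factor as the relative canonical map $X \dashrightarrow S := \PP_B(f_*\omega_{X/B})$ composed with the embedding of $S$ into $\PP^{h^0 - 1}$ induced by $|\xi + p^*(K_B + T)|$, where $\xi = c_1(\CO_S(1))$ and $p \colon S \to B$ is the structure map. Since the canonical image of a $(1,2)$-surface coincides with the $\PP^1$-fiber of $S \to B$, the map $X \dashrightarrow S$ is dominant and therefore $\Sigma = S$ as embedded subvarieties. A direct intersection computation on the $\PP^1$-bundle $S$ gives
\[
\deg \Sigma = \bigl(\xi + p^*(K_B + T)\bigr)^2 = \deg f_*\omega_{X/B} + 2\bigl(2g(B) - 2 + \deg T\bigr),
\]
while Riemann--Roch for the rank-two bundle $f_*\omega_{X/B} \otimes \CO_B(K_B + T)$ on $B$, combined with the vanishing of its $H^1$ for $T$ sufficiently ample, produces
\[
h^0(X, K_X + f^*T) = \deg f_*\omega_{X/B} + 2\bigl(2g(B) - 2 + \deg T\bigr) + 2(1 - g(B)).
\]
Subtracting yields the clean identity $\deg \Sigma = h^0(X, K_X + f^*T) + 2g(B) - 2$, which under $g(B) \ge 1$ is at least both $h^0(X, K_X + f^*T)$ and $h^0(X, K_X + f^*T) + g(B) - 2$, proving the claimed maximum bound.

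The main obstacle is that the hyperplane-section plus Clifford argument of part (1) is too weak for part (2) precisely when $g(B) = 1$: it only produces $\deg \Sigma \ge h^0(X, K_X + f^*T) - 1$, one unit short of the sharp bound $\deg \Sigma \ge h^0(X, K_X + f^*T)$. Bridging this gap is the reason part (2) must exploit the explicit $\PP^1$-bundle realization of $\Sigma$ supplied by the ``sufficiently ample'' hypothesis through the relative canonical description, rather than a soft non-degeneracy argument.
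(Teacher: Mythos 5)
Your proof is correct, but both halves take genuinely different routes from the paper's. For part (1) the paper never looks at the image surface $\Sigma$: it takes a general member $S\in|M|$, uses Lemma \ref{lem: g2} to identify $\psi|_S$ with $f'|_S\colon S\to B$, writes $M|_S=(\psi|_S)^*D$ for an effective divisor $D$ on $B$ of degree $d$ with $h^0(B,D)\ge p_g(X)-1$, and runs the Clifford/Riemann--Roch dichotomy for $D$ on the curve $B$ itself. That avoids your case split on $\deg\tau$, the estimate $\deg\Sigma\ge p_g(X)-2$, and the passage to the normalization of a hyperplane section (where you only control $g(\tilde C)\ge g(B)$ rather than the exact genus). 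For part (2) the paper proves the two bounds in the maximum separately: $\deg\Sigma\ge h^0(X,K_X+f^*T)$ is quoted from Nagata's theorem on degrees of non-degenerate irregular surfaces, while $\deg\Sigma\ge h^0(X,K_X+f^*T)+g(B)-2$ comes from the same Clifford/Riemann--Roch dichotomy applied to a general hyperplane section $H$ of $\Sigma$ (essentially your part (1) argument, with the hypothesis $h^0\ge g(B)+2$ used to absorb the Clifford branch). Your replacement -- computing $\deg\Sigma=(\xi+p^*(K_B+T))^2$ on the $\PP^1$-bundle and comparing with Riemann--Roch for $f_*\omega_{X/B}\otimes\CO_B(K_B+T)$ to obtain the identity $\deg\Sigma=h^0(X,K_X+f^*T)+2g(B)-2$ -- is sharper than the inequality the paper records and dispenses with Nagata's theorem; it relies on the complete system mapping $\PP_B(f_*\omega_{X/B})$ birationally onto $\Sigma$ and on $h^1(B,f_*\omega_X\otimes\CO_B(T))=0$, both of which the paper also extracts from $T$ being sufficiently ample, so the dependence is legitimate. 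Your closing observation about the one-unit gap at $g(B)=1$ correctly explains why a soft non-degeneracy argument cannot replace either Nagata's theorem or your direct computation.
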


\begin{proof}
For (1), the second inequality is obvious. Thus we only need to prove the first one. Take a general member $S \in |M|$. By Bertini's theorem, $S$ is smooth. Moreover, by Lemma \ref{lem: g2}, $S \cap F' = C$. Now a general fiber of the induced morphism $f'|_S: S \to B$ is $S \cap F' = C$, and it is just a general fiber of $\psi|_S$. We conclude that $f'|_S$ and $\psi|_S$ are identical to each other.

Since $M|_S \equiv dC$, we may write $M|_S = (\psi|_S)^*D$, where $D$ is an effective divisor on $B$ of degree $d$. Note that $h^0(B, D) = h^0(S, M|_S) \ge p_g(X) - 1$. If $h^1(B, D)>0$, by Clifford's inequality, 
$$
\deg D \ge 2h^0(B, D)-2 \ge 2p_g(X)-4.
$$
If $h^1(B, D)=0$, by the Riemann-Roch theorem, we have
$$
\deg D = h^0(B, D)+ g(B)-1 \ge p_g(X)+g(B)-2.
$$
This proves (1).

For (2), by the assumption on $T$, the map $\phi_{K_X+f^*T}$ is the relative canonical map $X \dashrightarrow \PP_B(f_* \omega_{X/B})$ of $X$ with respect to $f$. In particular, $\Sigma = \mathbb{P}_B(f_*\omega_{X/B}) \subset \mathbb{P}^{h^0(X, K_X+f^*T)-1}$ is a smooth $\PP^1$-bundle over $B$. Since $g(B) > 0$, $\Sigma$ is irregular. Thus by \cite[\S 10 and Theorem 8]{Nagata}, we deduce that 
$$
\deg \Sigma \ge h^0(X, K_X + f^*T).
$$

For the rest part of (2), let $H$ be the restriction on $\Sigma$ of a general hyperplane of $\mathbb{P}^{h^0(X, K_X+f^*T) - 1}$. By Bertini's theorem, $H$ is smooth. Moreover, $\deg \Sigma = \deg \CO_H(H)$. Since $H$ has a natural cover to $B$ induced by the projection, we have $g(H) \ge g(B)$. If $h^1(H, \CO_H(H)) > 0$, by Clifford's inequality,
$$
\deg \Sigma \ge 2 h^0(H, \CO_H(H)) - 2 \ge 2 h^0(X, K_X+f^*T) - 4.
$$
If $h^1(H, \CO_H(H)) = 0$, by the Riemann-Roch theorem, we have
$$
\deg \Sigma = h^0(H, \CO_H(H)) + g(H) - 1 \ge h^0(X, K_X+f^*T) + g(B) - 2.
$$
By Remark \ref{rmk: T}, we have $h^0(X, K_X+f^*T) \ge g(B) + 2$. It follows that
$$
\deg \Sigma \ge h^0(X, K_X+f^*T) + g(B) - 2.
$$
Thus the proof is completed.
\end{proof}

\begin{lemma} \label{lem: E0} 
There exists a unique $\pi$-exceptional prime divisor $E_0$ on $X'$ such that 
\begin{itemize}
	\item [(1)] $\coeff_{E_0} (Z) = \coeff_{E_0}(E_{\pi}) = 1$;
	\item [(2)] $\pi(E_0)=\Gamma$, $\phi_M (E_0) = \Sigma$;
	\item [(3)] $(E_0 \cdot C) = (Z \cdot C) = (E_{\pi} \cdot C) = ((\pi^*K_X) \cdot C) = 1$.
\end{itemize}
\end{lemma}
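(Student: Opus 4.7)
The plan is to work out everything by restricting to a general Albanese-type fiber $F'$ of $f'$ and then translating the two-dimensional picture back to $X'$. By Lemma~\ref{lem: g2}, $\pi|_{F'}\colon F'\to F$ is the single blow-up of the unique base point $p_0=\Gamma\cap F$ of $|K_F|$, with exceptional $(-1)$-curve $e_0$, and $M|_{F'}=(\pi|_{F'})^{*}K_F-e_0$. Using adjunction on the fiber $F'$ (so $F'|_{F'}\equiv 0$), I would read off
\[
K_{X'}|_{F'}=K_{F'}=(\pi|_{F'})^{*}K_F+e_0,\qquad (\pi^{*}K_X)|_{F'}=(\pi|_{F'})^{*}K_F,
\]
so that $E_\pi|_{F'}=e_0$ and, since $(\pi^{*}f^{*}T)|_{F'}=0$, also $Z|_{F'}=e_0$.

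Next I would compute the three intersection numbers with $C$. Since $C$ is a general fiber of $\psi$ and, by Lemma~\ref{lem: g2}, $f'$ factors birationally through $\psi$, $C$ sits inside a general $F'$; hence $(M\cdot C)=0$ and $(\pi^{*}f^{*}T\cdot C)=0$. On $F'$ one has $C\equiv (\pi|_{F'})^{*}K_F-e_0$, so $(e_0\cdot C)_{F'}=1$ and $((\pi|_{F'})^{*}K_F\cdot C)_{F'}=1$, giving $(\pi^{*}K_X\cdot C)=1$ and $(Z\cdot C)=1$. Adjunction on $F'$ yields $(K_{X'}\cdot C)=2g(C)-2=2$, whence $(E_\pi\cdot C)=1$. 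This settles the intersection equalities in~(3).

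For existence and uniqueness I would let $E_0$ be the unique $\pi$-exceptional prime divisor containing the curves $e_{0,b}$ as $b$ runs over general points of $B$ (concretely, the strict transform of the exceptional divisor of the first blow-up of $\Gamma$ in the tower defining $\pi$). The crucial structural point, which I would spell out, is that any other $\pi$-exceptional prime $E$ satisfies $E|_{F'}=0$ for general $F'$: indeed, $X$ has only isolated terminal singularities, so exceptional divisors from resolving singularities contract to points avoided by a general $F$; and in the subsequent base-locus resolution of $|K_X+f^{*}T|$, the only horizontal component of the base locus is $\Gamma$, while any further center lies either vertically in fibers of $f$ or over $\Gamma$ inside the previously created exceptional divisor, again missing a general $F'$. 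Similarly, the fixed divisor of $|K_X+f^{*}T|$ has no horizontal component, because $|K_F|$ on a general $F$ has no fixed divisor (only the base point $p_0$); so any non-exceptional component of $Z$ is vertical and disjoint from a general $F'$. Writing $E_\pi=\sum a_iE_i$ and expanding $Z$ similarly, the only $E_i$ with $(E_i\cdot C)\neq 0$ is $E_0$, and $(E_0\cdot C)=(e_0\cdot C)_{F'}=1$; combining with $(E_\pi\cdot C)=(Z\cdot C)=1$ forces $\coeff_{E_0}(E_\pi)=\coeff_{E_0}(Z)=1$, proving~(1) and the uniqueness of $E_0$.

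Finally, $\pi(E_0)=\Gamma$ is immediate, because the curves $e_{0,b}\subset E_0$ project to $p_{0,b}\in\Gamma$ and sweep out $\Gamma$ as $b$ varies while $E_0$ is contracted by $\pi$ to something of dimension $\le 1$. For $\phi_M(E_0)=\Sigma$ I would use $(e_0\cdot M|_{F'})=1$: this means $\phi_M|_{F'}$ does not contract $e_0$, so $\phi_M(e_{0,b})$ is a curve equal to the whole image $\phi_M(F'_b)\subset\Sigma$; letting $b$ vary, $\phi_M(E_0)$ is a $2$-dimensional irreducible closed subset of the irreducible surface $\Sigma$, hence all of $\Sigma$. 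The main technical obstacle is the structural claim in paragraph three that no $\pi$-exceptional divisor other than $E_0$ meets a general $F'$; once that is set up carefully from the explicit description of the tower $\beta$ and the terminality of $X$, the rest of the lemma is a clean intersection-number bookkeeping.
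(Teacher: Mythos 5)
Your proposal is correct and follows essentially the same route as the paper: both restrict to a general fiber $F'$, use Lemma \ref{lem: g2} to identify $E_\pi|_{F'}=Z|_{F'}$ with the $(-1)$-curve of the single blow-up of $F$ at the base point of $|K_F|$, and then read off $E_0$ and the intersection numbers with $C$ from this two-dimensional picture. Your paragraph on the structure of the blow-up tower (including the parenthetical identification of $E_0$ with the exceptional divisor over $\Gamma$) supplies more detail than the paper, which instead extracts $E_0$ directly as the unique component of $Z$ (resp.\ $E_\pi$) restricting nontrivially to $F'$, but the two arguments are substantively the same.
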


\begin{proof}
By Lemma \ref{lem: g2}, we may assume that $C$ is a general member of $\movable |(\pi|_{F'})^*K_F|$. Then we have 
$$
\left((\pi^*K_X) \cdot C\right) = \left((\pi^*K_X)|_{F'} \cdot C\right) = \left(\left((\pi|_{F'})^*K_F\right) \cdot C \right) =1.
$$
Since $(M \cdot C)=0$, it follows that
$$
(Z|_{F'} \cdot C) = (Z \cdot C) = \left((\pi^*(K_X+f^*T))  \cdot C\right)=\left((\pi^*K_X)  \cdot C \right) =1.
$$

By Lemma \ref{lem: g2}, $Z|_{F'}$ is a $(-1)$-curve on $F'$. Thus there exists a unique prime divisor $E_0 \subseteq Z$ with $\coeff_{E_0} (Z)=1$ such that $E_0|_{F'} = Z|_{F'}$. In particular, $\pi(E_0) = \Gamma$. Moreover, 
$$
(E_0 \cdot C) = (E_0|_{F'} \cdot C) = (Z|_{F'} \cdot C) = 1.
$$
Thus $\phi_{M}(E_0) = \Sigma$.

By the adjunction formula,
$$
K_{F'} = K_{X'}|_{F'} = (\pi^*K_X)|_{F'} + E_{\pi}|_{F'} = (\pi|_{F'})^*K_F + E_{\pi}|_{F'}.
$$
By Lemma \ref{lem: g2}, we deduce that $E_{\pi}|_{F'}$ is just the $(-1)$-curve $Z|_{F'}$. Using the same argument as for $Z$, we deduce that $\coeff_{E_0} (E_{\pi}) = 1$ and $(E_{\pi} \cdot C)=1$. The proof is completed.
\end{proof}

\begin{prop} \label{prop: Noether (1,2)-surface}
Suppose either $p_g(X) \ge 4$ or $T$ is sufficiently ample as in Remark \ref{rmk: T}. Then the following inequalities hold:
\begin{itemize}
	\item [(1)] $\displaystyle{(K_X\cdot\Gamma)  \ge \frac{1}{3} d - \frac{2}{3} t + \frac{2}{3}(g(B)-1)}$;
	\item [(2)] $\displaystyle{K_X^3 \ge \frac{4}{3} d-\frac{8}{3}t+\frac{2}{3}(g(B)-1)}$.
\end{itemize}
\end{prop}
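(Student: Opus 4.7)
The plan is to reduce both inequalities to triple-intersection computations on $X'$, exploiting the decomposition $\pi^*(K_X+f^*T)=M+Z$ together with the structure of the unique exceptional divisor $E_0$ supplied by Lemma \ref{lem: E0}.

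First I would derive a closed formula for $K_X^3$. Writing $\pi^*K_X=(M+Z)-\pi^*f^*T$ and expanding $(\pi^*K_X)^3$, the identity $(\pi^*f^*T)^2=0$ kills the higher-order terms in $\pi^*f^*T$. The relevant inputs are $M^3=0$ (since $\dim\Sigma=2$), $M^2\cdot Z=d\cdot(C\cdot Z)=d$ (Lemma \ref{lem: E0}), and $M^2\cdot\pi^*f^*T=0$, together with the surface-level facts $M|_{F'}\cdot Z|_{F'}=1$ and $(Z|_{F'})^2=-1$. Both of the latter are immediate from Lemma \ref{lem: E0} and the fact that $\pi|_{F'}$ is the blow-up of the unique base point of $|K_F|$ with $Z|_{F'}$ the corresponding $(-1)$-curve. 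Collecting these yields
\[
K_X^3 \;=\; 3d-3t+3M\cdot Z^2+Z^3.
\]

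Second I would compute $K_X\cdot\Gamma$ by a parallel analysis on the ruled divisor $E_0$. Because every fibre $G$ of $p:=\pi|_{E_0}\colon E_0\to\Gamma$ is $\pi$-contracted, $\pi^*K_X\cdot G=0$, hence $\pi^*K_X|_{E_0}=p^*\eta$ with $\deg\eta=K_X\cdot\Gamma$; likewise $\pi^*f^*T|_{E_0}=p^*T'$ with $\deg T'=t$. Writing $Z=E_0+Z'$, the restriction of the decomposition $\pi^*(K_X+f^*T)=M+Z$ to $E_0$ takes the form
\[
p^*(\eta+T') \;=\; M|_{E_0}+E_0|_{E_0}+Z'|_{E_0}.
\]
Squaring this identity (whose left side is numerically trivial on $E_0$) and intersecting it with $M|_{E_0}$ produce two linear relations on $E_0$; combining them with the adjunction value $E_0^3=(E_0|_{E_0})^2=2g(B)-2-K_X\cdot\Gamma$ (coming from $K_\Gamma=K_X|_\Gamma+\det N_{\Gamma/X}$) and with $\pi^*K_X\cdot M\cdot E_0=(\deg\tau)(K_X\cdot\Gamma)$ --- the latter obtained because a general fibre $G$ of $p$ maps with degree $\deg\tau$ onto a ruling of $\Sigma$ --- yields a master identity expressing $K_X\cdot\Gamma$ as an explicit linear function of $d$, $t$, $g(B)$, $\deg\tau$ and residual contributions involving $Z'|_{E_0}$ and $E_0^2\cdot Z'$.

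Finally I would bound these residuals. Lemma \ref{lem: E0}(3) yields $Z'\cdot C=0$, forcing every component of $Z'$ to map onto a $1$-dimensional subvariety of $\Sigma'$; combined with the nefness of $M$ and of $\pi^*K_X$ and with the $\PP^1$-bundle structure $E_0\to\Gamma$ (so that $E_0|_{E_0}=-\xi$ obeys $\xi^2=c_1(N_{\Gamma/X})\cdot\xi$), this controls the sign and magnitude of each residual term. The lower bound on $d$ furnished by Lemma \ref{lem: d} under either hypothesis ($p_g(X)\ge 4$ or $T$ sufficiently ample) is then enough to absorb the remaining correction and deliver (1) from the master identity; substituting the resulting estimate into the formula for $K_X^3$ and bounding $M\cdot Z^2$ by an entirely analogous intersection analysis on $E_0$ yields (2). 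The main obstacle I anticipate is the case $\deg\tau\ge 2$ together with the treatment of horizontal components of $Z'|_{E_0}$: here a pure Hodge-index argument on $E_0$ does not suffice, so I would supplement the master identity with a Clifford-type estimate for $|M|_{F'}|$ on a general fibre $C$ (using $g(C)=2$), which in combination with Lemma \ref{lem: d} delivers the sharp coefficients $\tfrac{1}{3}$ and $\tfrac{4}{3}$.
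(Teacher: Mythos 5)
Your opening computation is fine as far as it goes: expanding $(\pi^*K_X)^3=(M+Z-\pi^*f^*T)^3$ with $M^3=0$, $(M^2\cdot Z)=d$ and $(\pi^*f^*T)^2=0$ does give $K_X^3=3d-3t+3(M\cdot Z^2)+Z^3$. But this identity is not where the difficulty lies; the whole content of the proposition is a lower bound on the terms $3(M\cdot Z^2)+Z^3$ (equivalently on $(K_X\cdot\Gamma)$), and your plan for that bound has two genuine gaps. First, the structural assumptions you place on $E_0$ are unjustified at this stage: $X$ is only $\QQ$-factorial terminal, $\Gamma$ may pass through singular points of $X$ (the paper proves $\Gamma\subset X_{\sm}$ only in Theorem \ref{thm: |K_X|}, and only under the equality hypothesis), and $\pi$ is a long composite of blow-ups, so $E_0$ need not be a $\PP^1$-bundle over $\Gamma$, the identity $\pi^*K_X|_{E_0}=p^*\eta$ need not hold, and the adjunction value you quote for $E_0^3$ is unavailable (and has the wrong sign relative to the standard formula $E^3=-\deg N_{\Gamma/X}$ for a smooth blow-up). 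Second, and more fundamentally, your ``master identity'' on $E_0$ leaves residual terms ($E_0^2\cdot Z'$, the horizontal part of $Z'|_{E_0}$) whose signs are not controlled by nefness of $M$ and $\pi^*K_X$ alone; you defer exactly this point, and the proposed rescue — ``a Clifford-type estimate for $|M|_{F'}|$ on a general fibre $C$'' — is vacuous because $(M\cdot C)=0$. Nothing in your scheme produces the coefficient $\tfrac13$.

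The paper's proof runs through a different surface: a general member $S\in|M|$ rather than $E_0$. Restricting \eqref{eq: modification} and \eqref{eq: exceptional divisor} to $S$ and using Lemma \ref{lem: E0}(1) (both $E_\pi$ and $Z$ contain $E_0$ with coefficient exactly one), adjunction gives $K_S\equiv(2d-t)C+2\Gamma_S+E_V+Z_V$ with $E_V,Z_V$ vertical; descending to the minimal model $S_0$ and applying adjunction to the section $\Gamma_{S_0}$ yields $2g(B)-2\le\frac32(K_{S_0}\cdot\Gamma_{S_0})-(d-\frac12 t)$ — this is where $\tfrac13$ comes from. The decisive external input, which your proposal has no substitute for, is \cite[Corollary 2.3]{Chen_Chen_Jiang}: $(\pi^*(K_X+\frac12 f^*T))|_S\sim_{\QQ}\frac12\sigma^*K_{S_0}+H$ with $H$ effective. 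This encodes the minimality of $X$ in a form that converts the bound on $(K_{S_0}\cdot\Gamma_{S_0})$ into the bound on $(K_X\cdot\Gamma)$, and then (2) follows from the crude estimate $K_X^3\ge(\pi^*K_X\cdot M^2)+((\pi^*K_X)|_S\cdot Z|_S)-2t\ge d+(K_X\cdot\Gamma)-2t$, dropping nonnegative terms by nefness rather than computing them exactly. Without this comparison result, or an equivalent mechanism, the intersection bookkeeping on $E_0$ cannot close.
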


\begin{proof}
Let $E_0$ be the unique $\pi$-exceptional prime divisor as in Lemma \ref{lem: E0}. Similarly as in the proof of Lemma \ref{lem: d}, take a general member $S \in |M|$. By Bertini's theorem and Lemma \ref{lem: E0} (2), $S$ is smooth and $E_0|_S$ is irreducible. Then we obtain the fibration $f'|_S = \psi|_S: S \to B$.

Denote $\Gamma_S = E_0|_S$. Then $\Gamma_S$ is a section of $\psi|_S$. By Lemma \ref{lem: E0} (1), we may write 
\begin{equation} \label{eq: EVZV}
	E_{\pi}|_S = \Gamma_S + E_V, \quad Z|_S = \Gamma_S + Z_V.
\end{equation}
Here $E_V$ and $Z_V$ are effective $\QQ$-divisors on $S$. By Lemma \ref{lem: E0} (3), 
$$
(E_V \cdot C) = \left((E_{\pi} - E_0) \cdot C\right) = 0, \quad (Z_V \cdot C) = \left((Z-E_0) \cdot C\right)=0.
$$ 
We deduce that both $E_V$ and $Z_V$ are vertical with respect to $\psi|_S$.

Note that $(f'^*T)|_S \equiv tF'|_S \equiv tC$. By the adjunction formula, \eqref{eq: modification}, \eqref{eq: exceptional divisor} and \eqref{eq: EVZV},
\begin{equation}\label{eq: K_S}
	K_S = (K_{X'} + S)|_S \equiv (2d - t)C + 2 \Gamma_S + E_V + Z_V.
\end{equation}
Moreover, since $p_g(X)>0$, we have $M \ge f'^*T$. Thus $dC\equiv M|_S\ge (f'^*T)|_S$.  We deduce that $d \ge t$. Thus 
$$
2d - t \ge d \ge h^0(S, M|_S) - 1 \ge h^0(X, K_X + f^*T) - 2.
$$
Therefore, if $p_g(X) \ge 4$ or $T$ is sufficiently ample, we always have $2d-t \ge 2$.

Denote by $\sigma: S \to S_0$ the contraction onto its minimal model $S_0$. By \cite[Corollary 2.3 for $\lambda = 1$ and $D = K_X+f^*T$]{Chen_Chen_Jiang},\footnote{Here $D$ is semi-ample by \cite[Theorem 3.3]{Kollar_Mori}.} we have 
\begin{equation} \label{eq: restriction comparison}
	\left. \left(\pi^* \left(K_X+\frac{1}{2} f^*T \right)\right) \right|_S \sim_{\QQ} \frac{1}{2} \sigma^* K_{S_0} + H,
\end{equation}
where $H$ is an effective $\QQ$-divisor. Therefore, by Lemma \ref{lem: E0} (3), $((\sigma^*K_{S_0}) \cdot C) \le ((\pi^*(2K_X + f^*T)) \cdot C) = 2$. Let $C_0 = {\sigma}_*C$ and $\Gamma_{S_0} = {\sigma}_*\Gamma_S$. Then we have $(K_{S_0} \cdot C_0) \le 2$. On the other hand, by \eqref{eq: K_S},
$$
K_{S_0} \equiv (2d - t)C_0 + 2\Gamma_{S_0} + {\sigma}_*(E_V + Z_V).	
$$
We deduce that $(K_{S_0} \cdot C_0) \ge (2d -t) C_0^2 \ge 2 C_0^2$. By parity, it follows that $(K_{S_0} \cdot C_0) = 2$ and $C_0^2 = 0$. In particular, the fibration $\psi|_S$ descends to a fibration $S_0 \to B$ whose general fiber is $C_0$ with $g(C_0)=2$. Moreover, 
$$
(H \cdot C) = \left( \left(\pi^*\left(K_X + \frac{1}{2} f^*T\right) \right) \cdot C \right)-\frac{1}{2}(K_{S_0} \cdot C_0) = 0.
$$ 
This implies that $H$ is vertical with respect to $\psi|_S$. 

Since $\Gamma_{S_0}$ is a section of the fibration $S_0 \to B$, $\Gamma_{S_0} \simeq B$. Thus $g(\Gamma_{S_0}) = g(B)$. By the adjunction formula on $S_0$, we have
\begin{align}
	2g(B)-2 & =(K_{S_0} \cdot \Gamma_{S_0}) + \Gamma_{S_0}^2 \nonumber \\
	& = (K_{S_0} \cdot \Gamma_{S_0}) + \frac{1}{2} \left( \left(K_{S_0} - (2d - t) C_0 - {\sigma}_*(E_V + Z_V) \right) \cdot \Gamma_{S_0}\right) \nonumber \\
	& =\frac{3}{2} (K_{S_0} \cdot \Gamma_{S_0}) - \left (d -\frac{1}{2}t \right) - \frac{1}{2} \left({\sigma}_*(E_V+Z_V) \cdot \Gamma_{S_0} \right) \label{eq: remark 1} \\
	&\le \frac{3}{2} (K_{S_0} \cdot \Gamma_{S_0}) - \left (d -\frac{1}{2} t \right), \nonumber
\end{align}
where the last inequality follows from the fact that ${\sigma}_*(E_V + Z_V)$ is vertical with respect to the fibration $S_0 \to B$. Together with \eqref{eq: restriction comparison} and the fact that $(H \cdot \Gamma_S) \ge 0$, we deduce that
\begin{align*}
	(K_X \cdot \Gamma)  = \left((\pi^*K_X)|_S \cdot \Gamma_S \right) & \ge \frac{1}{2} \left((\sigma^* K_{S_0}) \cdot \Gamma_S \right)-\frac{1}{2} t \\
	& \ge \frac{1}{3} d-\frac{2}{3} t+\frac{2}{3}(g(B)-1).
\end{align*}
This proves (1). 

For (2), note that
\begin{align} 
	K_X^3 & = (K_X \cdot (K_X+f^*T)^2) - 2tK_F^2 \nonumber \\ 
	& \ge ((\pi^* K_X) \cdot M^2) + \left( (\pi^*K_X)|_S \cdot Z|_S \right)-2t. \label{eq: remark 2}
\end{align}
By Lemma \ref{lem: E0} (3), 
$$
\left((\pi^*K_X) \cdot M^2 \right) = d \left((\pi^*K_X) \cdot C \right) = d.
$$
By (1), we have
$$
\left((\pi^*K_X)|_S \cdot Z|_S \right) \ge \left((\pi^*K_X)|_S \cdot \Gamma_S \right) = (K_X \cdot \Gamma) \ge \frac{1}{3} d - \frac{2}{3} t + \frac{2}{3}(g(B)-1).
$$
Combine the above inequalities  together. We deduce that 
$$
K_X^3 \ge  \frac{4}{3}d - \frac{8}{3}t + \frac{2}{3}(g(B)-1).
$$ 
This proves (2).
\end{proof}

The following is a crucial proposition that we will frequently use in our later argument.

\begin{prop} \label{prop: equality}
Keep the same notation as in the proof of Proposition \ref{prop: Noether (1,2)-surface}. Suppose the equality in Proposition \ref{prop: Noether (1,2)-surface} (2) holds. Then we have
\begin{itemize}
	\item [(1)] $\sigma_*(E_V + Z_V) = 0$ and $K_{S_0} \equiv (2d - t)C_0 + 2 \Gamma_{S_0}$;
	\item [(2)] $K_{S_0}^2 = \frac{8}{3} (2d - t + g(B) - 1)$;
	\item [(3)] $(\pi^*(2K_X+f^*T))|_S \sim_{\QQ} \sigma^*K_{S_0}$.
\end{itemize}
\end{prop}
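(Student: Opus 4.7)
The plan is to extract Proposition~\ref{prop: equality} by tracking which non-negative quantities must vanish in the proof of Proposition~\ref{prop: Noether (1,2)-surface}(2) under the equality hypothesis, and then combining the Zariski lemma for vertical divisors on a fibered surface with negative-definiteness of the intersection form on $\sigma$-exceptional divisors.

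The first step is to unpack the inequality chain in the proof of Proposition~\ref{prop: Noether (1,2)-surface}(2). Equality in (2) forces equality at each of the four places where non-negative quantities were discarded, yielding: (a) $(\pi^*K_X) \cdot \pi^*(K_X + f^*T) \cdot Z = 0$ (from the inequality in \eqref{eq: remark 2}), (b) $(\pi^*K_X)|_S \cdot Z_V = 0$ (from replacing $Z|_S$ by $\Gamma_S$), (c) $H \cdot \Gamma_S = 0$ (from the nef bound in \eqref{eq: restriction comparison}), and (d) $\sigma_*(E_V+Z_V) \cdot \Gamma_{S_0} = 0$ (from the dropped term in \eqref{eq: remark 1}).

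For (1), set $V_0 := \sigma_*(E_V + Z_V)$. Pushing \eqref{eq: K_S} forward by $\sigma$ gives $K_{S_0} \equiv (2d-t)C_0 + 2\Gamma_{S_0} + V_0$, where $V_0$ is effective and, since $E_V$ and $Z_V$ are vertical with respect to $\psi|_S = f'|_S$ (which factors through $\sigma$), also vertical with respect to the induced fibration $S_0 \to B$. Using (d) and $C_0 \cdot V_0 = 0$, one computes $K_{S_0} \cdot V_0 = V_0^2$. Nef-ness of $K_{S_0}$ forces $V_0^2 \ge 0$, while the Zariski lemma for vertical divisors gives $V_0^2 \le 0$, with equality iff $V_0$ is numerically a non-negative rational combination $\sum_b c_b F_b$ of entire fibers. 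Then $0 = V_0 \cdot \Gamma_{S_0} = \sum_b c_b$ forces every $c_b = 0$, so $V_0 \equiv 0$; being effective, $V_0 = 0$. This proves (1). Part (2) is then a direct adjunction-based computation on $S_0$: using $K_{S_0} \cdot \Gamma_{S_0} = (2d-t) + 2\Gamma_{S_0}^2$ from (1) together with $\Gamma_{S_0}^2 + K_{S_0} \cdot \Gamma_{S_0} = 2g(B) - 2$, one solves $\Gamma_{S_0}^2 = \frac{1}{3}(2g(B)-2-(2d-t))$, and then $K_{S_0}^2 = 4(2d-t) + 4\Gamma_{S_0}^2 = \frac{8}{3}(2d-t+g(B)-1)$.

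For (3), I aim to show $H = 0$. Subtracting $M|_S \equiv dC$ and $E_\pi|_S = \Gamma_S + E_V$ from \eqref{eq: K_S} yields $(\pi^*K_X)|_S \equiv (d-t)C + \Gamma_S + Z_V$. Combined with \eqref{eq: restriction comparison} and the formula $K_{S_0} \equiv (2d-t)C_0 + 2\Gamma_{S_0}$ from (1) (using $\sigma^*C_0 \equiv C$ and $\sigma^*\Gamma_{S_0} = \Gamma_S + E_\sigma$ for an effective $\sigma$-exceptional correction $E_\sigma$), this gives the numerical identity $H \equiv Z_V - E_\sigma$. Since $V_0 = 0$ and $E_V, Z_V$ are effective, $\sigma_*Z_V = 0$, so $Z_V$ is $\sigma$-exceptional; hence the class of $H$ lies in the $\sigma$-exceptional sublattice of $\mathrm{NS}(S)_\QQ$. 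As $H$ is effective and $\sigma_*H$ is effective yet numerically trivial on $S_0$, we obtain $\sigma_*H = 0$, so $H$ itself is $\sigma$-exceptional as a $\QQ$-divisor. By negative-definiteness of the intersection form on $\sigma$-exceptional $\QQ$-divisors, $H^2 \le 0$. On the other hand, $\sigma^*K_{S_0} \cdot H = K_{S_0} \cdot \sigma_*H = 0$, and expanding this via \eqref{eq: restriction comparison} (noting $(f'^*T)|_S \cdot H = tC \cdot H = 0$ since $H$ is vertical) yields $H^2 = (\pi^*K_X)|_S \cdot H \ge 0$. Hence $H^2 = 0$ and $H = 0$, which gives (3).

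The main subtlety is the passage from numerical to divisorial $\sigma$-exceptionality of $H$ in step (3); this relies on the fact that an effective $\QQ$-divisor numerically equivalent to zero on a projective surface is the zero divisor. Apart from that, the proof is a tight interplay between the nef positivity (producing lower bounds on self-intersections) and the negative-definiteness of exceptional forms (producing upper bounds), with the equality hypothesis squeezing the inequalities together.
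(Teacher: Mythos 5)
Your proof is correct, and for parts (1) and (3) it takes a genuinely different route from the paper's. For (1), after extracting $(\sigma_*(E_V+Z_V)\cdot\Gamma_{S_0})=0$ from the equality in \eqref{eq: remark 1} (exactly as the paper does), the paper kills $V_0:=\sigma_*(E_V+Z_V)$ by invoking the $2$-connectedness of $K_{S_0}$ from \cite[Lemma 1]{Bombieri}: the decomposition $K_{S_0}\equiv V_0+\bigl((2d-t)C_0+2\Gamma_{S_0}\bigr)$ has intersection $0<2$, so $V_0=0$. You instead combine nefness of $K_{S_0}$ (giving $V_0^2=K_{S_0}\cdot V_0\ge 0$) with Zariski's lemma (giving $V_0^2\le 0$, with equality only for non-negative rational combinations of full fibres) and then use the section $\Gamma_{S_0}$ to force all fibre coefficients to vanish; this is equally valid and avoids citing Bombieri. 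Part (2) is the same computation in both. For (3), the paper computes $\bigl((\pi^*(2K_X+f^*T))|_S\bigr)^2=K_{S_0}^2$ from the equality in \eqref{eq: remark 2} and sandwiches with the Hodge index theorem to get $H\equiv 0$, whence $H=0$ by effectivity. You instead identify $H\equiv Z_V-E_\sigma$ numerically, deduce $\sigma_*H=0$ so that the effective $\QQ$-divisor $H$ is supported on the $\sigma$-exceptional locus, and conclude $H^2\le 0$ from negative definiteness of the exceptional lattice, while $\sigma^*K_{S_0}\cdot H=0$ together with verticality of $H$ gives $H^2=(\pi^*K_X)|_S\cdot H\ge 0$; negative definiteness then forces $H=0$. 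Both routes are sound; yours is more self-contained (no Hodge index theorem, no $2$-connectedness) at the price of somewhat more bookkeeping with exceptional divisors, and as a by-product it yields the identity $Z_V=E_\sigma$, i.e.\ $Z_V$ is exactly the $\sigma$-exceptional correction of $\Gamma_{S_0}$.
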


\begin{proof} 	
Since the equality in Proposition \ref{prop: Noether (1,2)-surface} (2) holds, all inequalities in the above proof become equalities.  On the one hand, \eqref{eq: remark 1} becomes an equality, and we have $({\sigma}_*(E_V + Z_V) \cdot \Gamma_{S_0}) = 0$. Thus $({\sigma}_*(E_V + Z_V) \cdot (2\Gamma_{S_0} + (2d - t) C_0))=0$. Note that by \cite[Lemma 1]{Bombieri}, $K_{S_0}$ is $2$-connected. We deduce that ${\sigma}_*(E_V + Z_V) = 0$. It follows that
$$
K_{S_0} \equiv (2d - t)C_0 + 2 \Gamma_{S_0}.
$$
Thus (1) is proved. Also by \eqref{eq: remark 1}, we have 
$$
\Gamma_{S_0}^2 = -(K_{S_0} \cdot \Gamma_{S_0}) + 2g(B) - 2 = -\frac{1}{3}(2d - t) + \frac{2}{3}(g(B)-1).
$$ 
Together with (1), it follows that 
$$
K_{S_0}^2 = \left((2d -t) C_0 + 2\Gamma_{S_0}\right)^2 = \frac{8}{3} (2d - t + g(B) - 1),
$$
and (2) is proved. Now \eqref{eq: remark 2} also becomes an equality. This implies that
\begin{align*}
	\left( \left(\pi^*(2K_X + f^*T) \right)|_S \right)^2 
	& = 4 \left( (\pi^*K_X)|_S \cdot \left(\pi^*(K_X + f^*T) \right)|_S \right) \\
	& = 4 \left((\pi^*K_X) \cdot M^2 \right) + 4((\pi^*K_X)|_S \cdot Z|_S) \\ 
	& = \frac{8}{3} (2d - t + g(B) - 1).
\end{align*}
By \eqref{eq: restriction comparison}, we have
\begin{align*}
		\left( \left(\pi^*(2K_X + f^*T) \right)|_S \right)^2 \ge   \left( \left(\pi^*(2K_X + f^*T) \right)|_S  \cdot \sigma^*K_{S_0}\right) \ge K_{S_0}^2.
\end{align*}
Combine the above equalities and inequalities with \eqref{eq: restriction comparison} and apply the Hodge index theorem. We deduce that
$$
(\pi^*(2K_X+f^*T))|_S \equiv \sigma^*K_{S_0},
$$
i.e., $H \equiv 0$. Since $H$ is effective, we conclude that $H=0$ and
$$
(\pi^*(2K_X+f^*T))|_S \sim_{\QQ} \sigma^*K_{S_0}.
$$
This proves (3).
\end{proof}

\subsection{A particular case when $T=0$, $g(B)>0$ and $\dim \Sigma = 2$} \label{subsection: d=pg-1}

In this subsection, we still assume that $\dim \Sigma =2$, but we further assume that $T = 0$, $g(B)>0$ and $p_g(X) \ge 4$. That is, we consider the canonical map of $X$.  By Lemma \ref{lem: d} (1), we have 
$$
d = (\deg \tau) \cdot (\deg \Sigma) \ge p_g(X) - 1.
$$ 
In the following, we focus on the specific case when 
$$
d= p_g(X) - 1.
$$
In this case, since $\deg \Sigma \ge p_g(X) - 2$ and $p_g(X) \ge 4$, we deduce that $\deg \Sigma = p_g(X) - 1$ and $\deg \tau = 1$. Since $f'$ factors birationally through $\psi$ by Lemma \ref{lem: g2}, we further deduce that $\Sigma$ is birationally fibered over $B$. Note that $g(B) > 0$. By \cite[Theorem 8]{Nagata}, $g(B) = 1$ and $\Sigma$ is a cone over $B$. In particular, $\Sigma$ is normal. Thus $\Sigma' = \Sigma$. Moreover, $\phi_M(F')$ is a line on $\Sigma$.

Let $\Sigma_0$ be the blow-up of $\Sigma$ at the unique cone singularity $v$. Then we obtain a ruled surface $p: \Sigma_0 \to B$ over $B$. Let $s$ be the exceptional curve on $\Sigma_0$, and let $l$ be a ruling on $\Sigma_0$. Then the morphism $\Sigma_0 \to \Sigma \subset \PP^{p_g(X) -1}$ is just induced by the base-point-free linear system $|s + p^*e_B|$ on $\Sigma_0$, where $e_B$ is a divisor on $B$ of degree $d=p_g(X) - 1$. Since $s$ is contracted via this morphism, we deduce that $s^2 = -(p_g(X)-1) \le -3$. By \cite[V, Theorem 2.15]{Hartshorne}, $p_* \CO_{\Sigma_0}(s)$ is decomposable. Thus 
$$
p_* \CO_{\Sigma_0}(s) = \CO_B \oplus \CO_B(-e_B).
$$
Therefore, we have the following commutative diagram
$$
\xymatrix{
X' \ar@{-->}[rr]^{\psi_0} \ar[rrd]_{\psi = \phi_M} & & \Sigma_0 \ar[rr]^{p} \ar[d] & & B \\
& & \Sigma \ar@{-->}[rru] & &   
}
$$
where the rational map $\psi_0$ is induced by the blow-up of $\Sigma$.

\begin{lemma} \label{lem: psi_0}
The rational map $\psi_0$ is a morphism. Moreover, $p \circ \psi_0 = f'$ and 
\begin{equation} \label{eq: M}
	|M| = \psi_0^*|s + p^*e_B|.
\end{equation}
\end{lemma}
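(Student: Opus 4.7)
The plan is to extend the rational map $\psi_0$ to a morphism via a graph-of-rational-map argument, and then verify the remaining two identities by a direct sheaf-theoretic computation.

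By Lemma \ref{lem: g2}, the fibration $f'$ factors through $\psi$ as rational maps, i.e., $f' = p_\Sigma \circ \psi$, where $p_\Sigma : \Sigma \dashrightarrow B$ is precisely the projection from the cone vertex $v$ (this coincides with $p_{\Sigma'}$ under the identification $\Sigma' = \Sigma$). The morphism $(\pi_0, p) : \Sigma_0 \to \Sigma \times B$ is a closed embedding realizing $\Sigma_0$ as the closure in $\Sigma \times B$ of the graph of $p_\Sigma$. Consider then the combined morphism
$$
(\psi, f') : X' \longrightarrow \Sigma \times B.
$$
On the dense open subset $U' := \psi^{-1}(\Sigma \setminus \{v\}) \subseteq X'$, its image lies in the graph of $p_\Sigma$ since $f'|_{U'} = p_\Sigma \circ \psi|_{U'}$. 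By continuity and the irreducibility of $X'$, the image of $(\psi, f')$ is contained in $\Sigma_0 \subseteq \Sigma \times B$. This factorization yields the desired morphism $\psi_0 : X' \to \Sigma_0$ extending the given rational map, and the identity $p \circ \psi_0 = f'$ is immediate since $p$ corresponds to the second projection restricted to $\Sigma_0$.

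For the equality $|M| = \psi_0^*|s + p^*e_B|$, I would first observe
$$
\psi_0^* \CO_{\Sigma_0}(s + p^*e_B) = \psi_0^* \pi_0^* \CO_\Sigma(1) = \psi^* \CO_\Sigma(1) = \CO_{X'}(M),
$$
so $\psi_0^* |s + p^*e_B| \subseteq |M|$. To get equality, a dimension count suffices: using the splitting $p_* \CO_{\Sigma_0}(s) = \CO_B \oplus \CO_B(-e_B)$ together with $g(B) = 1$ and $\deg e_B = d = p_g(X) - 1$, Riemann--Roch yields $h^0(\Sigma_0, s + p^*e_B) = d + 1 = p_g(X)$, which matches $h^0(X', M)$. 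Since $\psi_0$ is surjective ($\pi_0$ is birational and $\psi(X') = \Sigma$), the pullback $\psi_0^*$ is injective on global sections, hence an isomorphism, which gives \eqref{eq: M}.

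The main technical subtlety is extending $\psi_0$ across the preimage of the vertex $v$. A direct local analysis of the ideal sheaf $\psi^{-1}\mathfrak{m}_v \cdot \CO_{X'}$ (e.g., to verify it is locally principal, so that the universal property of the blow-up applies) looks cumbersome, because the cone singularity at $v$ is elliptic rather than rational and the preimage $\psi^{-1}(v)$ is itself a divisor on $X'$ rather than a subscheme of codimension $\ge 2$. The graph-closure realization $\Sigma_0 \hookrightarrow \Sigma \times B$ sidesteps this delicate local issue completely by converting the extension problem into the containment of the image of $(\psi, f')$ in the closed subvariety $\Sigma_0$.
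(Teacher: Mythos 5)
Your proof is correct, but the extension of $\psi_0$ to a morphism is done by a genuinely different mechanism than in the paper. The paper resolves the indeterminacy by a further blow-up $\nu\colon X''\to X'$ and applies Zariski's main theorem: if $\psi_0$ were not a morphism, there would be an integral curve $A\subset X''$ with $\nu(A)$ a point of $\psi^{-1}(v)$ and $\mu(A)=s$, forcing $f''(A)=p(s)=B$ while $f''(A)=f'(\nu(A))$ is a point. Your graph-closure embedding $(\pi_0,p)\colon\Sigma_0\hookrightarrow\Sigma\times B$ reaches the same conclusion without the auxiliary resolution, at the cost of checking that $(\pi_0,p)$ really is a closed immersion (it is injective and unramified along $s$ because $p|_s$ is an isomorphism onto $B$ while $\pi_0$ embeds each ruling, and it is proper) --- worth a line, but true; both routes hinge on the same input, namely Lemma \ref{lem: g2}, and also silently use that the rational map $p_{\Sigma'}$ of that lemma is the projection of the cone from $v$ (its fibers are the lines $\phi_M(F')$, hence the rulings). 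For \eqref{eq: M} the paper just reads the equality off the commutative diagram, i.e.\ $|M|=\phi_M^*|\CO_\Sigma(1)|=\psi_0^*\pi_0^*|\CO_\Sigma(1)|=\psi_0^*|s+p^*e_B|$; your dimension count is a valid independent verification, but note that it closes only because $|M|$ is a \emph{complete} linear system: $h^0(X',\CO_{X'}(M))=h^0(X',\rounddown{\pi^*K_X})=p_g(X)$, since passing to the movable part does not change $h^0$. Without that observation, two $(p_g(X)-1)$-dimensional subsystems of the same line bundle need not coincide, so you should state it explicitly.
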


\begin{proof}
Take a birational modification $\nu: X'' \to X'$ such that:
\begin{enumerate}
	\item $X''$ is smooth and $\nu$ is an isomorphism over $X''-\psi^{-1}(v)$;
	\item the rational map $\mu = \psi_0 \circ \nu: X''\dashrightarrow \Sigma_0$ is a morphism.
\end{enumerate}
Let $f'': X'' \to B$ be the Albanese fibration of $X''$. Then $f'' = f' \circ \nu$. By Lemma \ref{lem: g2}, $f'$ birationally factors through $\psi$. It follows that $f'' = p \circ \mu$.

Suppose $\psi_0$ is not a morphism. By Zariski's main theorem, there is an integral curve $A \subset X''$ such that $\nu(A) \in \psi^{-1}(v)$ is a point and $\mu(A) \subset \Sigma_0$ is a curve. Since $\psi(\nu(A)) = v$, we deduce that $\mu(A) = s$. Thus $f''(A) = p(s) = B$. On the other hand, $f''(A) = f'(\nu(A))$ is a point. This is a contradiction. Thus $\psi_0$ is a morphism. 

Now $\psi_0^*l = F'$. Thus it follows that $p \circ \psi_0 = f'$, and $\eqref{eq: M}$ simply follows from the above commutative diagram.
\end{proof}

In the following, we adopt the idea in \cite{Chen_Chen_Jiang} to prove the ``weak pseudo-effectivity'' of $3\pi^*K_X- d F'$ (Lemma \ref{lem: weak pseudo-effective}). Recall that by Lemma \ref{lem: E0},  there is a unique $\pi$-exceptional prime divisor $E_0$ satisfying the condition therein. 

\begin{lemma}\label{lem: D0}
There exists a unique prime divisor $D_0$  such that
\begin{itemize}
	\item [(1)] $\coeff_{D_0} (\psi_0^*s) = 1$;
	\item [(2)] $(D_0 \cdot E_0 \cdot F')=1$ and $((\pi^*K_X) \cdot D_0 \cdot F')=1$.
\end{itemize} 
\end{lemma}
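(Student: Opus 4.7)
The plan is to identify $D_0$ as the unique prime component of $\psi_0^*s$ which dominates $B$ via $f'$ and has multiplicity one in $\psi_0^*s$; both conditions (1) and (2) will follow from an intersection-theoretic analysis on a general Albanese fiber $F'$.

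First I would translate the problem to $F'$. By Lemma~\ref{lem: psi_0}, $|M| = \psi_0^*|s + p^*e_B|$, so $\psi_0^*s \sim M - f'^*e_B$. Since $f'^*e_B|_{F'} = 0$ for a general $F'$, Lemma~\ref{lem: g2} gives
\[
(\psi_0^*s)|_{F'} \sim M|_{F'} = (\pi|_{F'})^*K_F - E_0|_{F'}
\]
in $\Pic(F')$. Decompose $\psi_0^*s = \sum_i a_i D_i + V$, where the $D_i$ are the prime components dominating $B$ via $f'$ (with $a_i \ge 1$) and $V$ collects the components contained in fibers of $f'$. For a general $F'$ we have $V|_{F'} = 0$, so $\sum_i a_i D_i|_{F'} \sim M|_{F'}$ as divisors on $F'$.

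Second, I would record three intersection identities on $F'$. Since Lemma~\ref{lem: E0} gives $\phi_M(E_0) = \Sigma$ which is two-dimensional, $\psi_0(E_0) = \Sigma_0$, so $E_0$ is not itself a component of $\psi_0^*s$; in particular each $D_i \ne E_0$. A short codimension argument then shows $E_0|_{F'}$ cannot be a component of $D_i|_{F'}$ for a general $F'$: otherwise, letting $F'$ sweep a dense open of $B$ would force $E_0 \subseteq D_i$, and since both are prime divisors of the same dimension one would get $E_0 = D_i$, a contradiction. Thus $D_i|_{F'} \cdot E_0|_{F'} \ge 0$; together with the nef-ness of $(\pi|_{F'})^*K_F$ and of the base-point-free class $M|_{F'}$, all summands in the three identities
\begin{align*}
\sum_i a_i (D_i|_{F'} \cdot E_0|_{F'}) &= M|_{F'} \cdot E_0|_{F'} = 1, \\
\sum_i a_i (D_i|_{F'} \cdot (\pi|_{F'})^*K_F) &= K_F^2 = 1, \\
\sum_i a_i (D_i|_{F'} \cdot M|_{F'}) &= M|_{F'}^2 = 0
\end{align*}
are non-negative integers. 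The last equation forces every $D_i|_{F'} \cdot M|_{F'} = 0$, equivalently $D_i|_{F'} \cdot (\pi|_{F'})^*K_F = D_i|_{F'} \cdot E_0|_{F'}$ for every $i$.

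Finally, the first two identities together with this matching of terms single out a unique index $i_0$ for which $a_{i_0} = 1$ and $D_{i_0}|_{F'} \cdot E_0|_{F'} = D_{i_0}|_{F'} \cdot (\pi|_{F'})^*K_F = 1$, while every other $D_j$ contributes $0$ to both sums. Setting $D_0 := D_{i_0}$ and translating these surface-level intersection numbers back to triple intersections on $X'$ yields conditions (1) and (2). Existence of $i_0$ is automatic from the nonzero class $M|_{F'}$, which forces at least one horizontal summand; uniqueness of $D_0$ subject to (1) and (2) follows because (2) forces $D_0 \cdot F' \ne 0$ on $X'$, so $D_0$ must dominate $B$ and hence coincide with $D_{i_0}$. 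The only delicate step is the non-negativity $D_i|_{F'} \cdot E_0|_{F'} \ge 0$, which I would handle precisely via the codimension argument sketched above.
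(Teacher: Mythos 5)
Your proposal is correct and follows essentially the same route as the paper: both restrict to a general fiber $F'$, use that $(\psi_0^*s)\cdot E_0\cdot F' = (M\cdot E_0\cdot F') = 1$ together with the integrality of the coefficients of $\psi_0^*s$ and the non-negativity of $(D\cdot E_0\cdot F')$ for each component $D\neq E_0$ to isolate the unique contributing component, and then deduce $((\pi^*K_X)\cdot D_0\cdot F')=1$ from $(\pi|_{F'})^*K_F\equiv C+E_0|_{F'}$ and the vanishing of $(C\cdot D_0\cdot F')$. Your explicit horizontal/vertical decomposition and the third identity $M|_{F'}^2=0$ are a slightly more elaborate packaging of the same facts (the paper gets the vanishing $(C\cdot D_0|_{F'})=0$ directly because $D_0|_{F'}$ sits in a fiber of $\psi_0|_{F'}$), but the argument is sound.
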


\begin{proof}
By the abuse of notation, we still denote by $C$ the general fiber of $\psi_0$. By Lemma \ref{lem: psi_0}, we have 
$$
M|_{F'}=(\psi_0^*(s+p^*e_B))|_{F'}\equiv (\psi_0^*s)|_{F'}.
$$
By Lemma \ref{lem: g2}, $M|_{F'}\equiv C$. Thus we have $(\psi_0^*s)|_{F'}  \equiv C$. By Lemma \ref{lem: E0} (3), $((\psi_0^*s) \cdot E_0 \cdot F')= (E_0 \cdot C) =1$. Since $\psi_0^*s$ is Cartier and $E_0 \nsubseteq \mathrm{Supp}(\psi_0^*s)$, for any prime divisor $D$ with $\coeff_{D} (\psi_0^*s) > 0$, we have $\coeff_{D}(\psi_0^*s) \ge 1$ and $(D \cdot E_0 \cdot F')$ is a non-negative integer. Thus there exists a unique prime divisor $D_0$ with $\coeff_{D_0} (\psi_0^*s)  = 1$ such that 
$$
(D_0 \cdot E_0 \cdot F')=1.
$$ 
Note that we have $(\pi^*K_X)|_{F'}=(\pi|_{F'})^*K_F\equiv C+E_0|_{F'}$ by Lemma \ref{lem: g2}. Therefore, 
$$
\left((\pi^*K_X) \cdot D_0 \cdot F'\right) = (E_0\cdot D_0\cdot F') = 1.
$$
The proof is completed.
\end{proof}

\begin{lemma}\label{lem: construction of effective divisor (1,2) surface}
Let $A$ be an ample Cartier divisor on $\Sigma_0$. For any integer $m>0$, there exists an integer $c>0$ and an effective divisor $H_m \sim cm(K_{X'/\Sigma_0}+E_0)+ c \psi_0^*A$ such that $E_0 \nsubseteq \mathrm{Supp}(H_m)$, where $K_{X'/\Sigma_0}=K_{X'}-\psi_0^*K_{\Sigma_0}$ and $E_0$ is the $\pi$-exceptional divisor as in Lemma \ref{lem: E0}.
\end{lemma}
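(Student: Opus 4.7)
The plan is to follow the strategy of \cite{Chen_Chen_Jiang}, applying a Viehweg-type weak positivity result to the relative pluricanonical sheaf of the fibration $\psi_0\colon X'\to\Sigma_0$, and then exploiting the birationality of $\psi_0|_{E_0}$ to control the support of the resulting effective divisor.

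First I would collect the geometric input. By Lemma \ref{lem: psi_0} the morphism $\psi_0$ is surjective with connected general fibers, by Lemma \ref{lem: g2} a general fiber $C$ is a smooth curve of genus $2$, and by Lemma \ref{lem: E0}(3) one has $((\pi^*K_X)\cdot C)=1$, $(E_0\cdot C)=1$. Since $(K_{X'}\cdot C)=2g(C)-2=2$, it follows that $((K_{X'/\Sigma_0}+E_0)\cdot C)=3$, and in particular $K_{X'/\Sigma_0}+E_0$ is $\psi_0$-big. Furthermore, since $\phi_M(E_0)=\Sigma$ by Lemma \ref{lem: E0}(2) and $(E_0\cdot C)=1$, the restriction $\psi_0|_{E_0}\colon E_0\to\Sigma_0$ is birational.

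Next I would apply weak positivity to the torsion-free coherent sheaf $\CE_m:=(\psi_0)_*\CO_{X'}(m(K_{X'/\Sigma_0}+E_0))$ on $\Sigma_0$. Since $E_0$ is $\psi_0$-horizontal and meets a general fiber of $\psi_0$ in a single reduced point, Viehweg's weak positivity theorem (in the logarithmic form needed for the pair $(X',E_0)$) applies and $\CE_m$ is weakly positive. Consequently, for the given ample Cartier divisor $A$ there exists a positive integer $c$ such that $\Sym^c\CE_m\otimes\CO_{\Sigma_0}(cA)$ is generated by its global sections over a nonempty Zariski-open subset $U\subset\Sigma_0$. Pulling a global section back through $\psi_0$ and composing with the natural multiplication map $\psi_0^*\Sym^c\CE_m\to\CO_{X'}(cm(K_{X'/\Sigma_0}+E_0))$ produces the desired effective divisor $H_m\sim cm(K_{X'/\Sigma_0}+E_0)+c\psi_0^*A$.

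To guarantee $E_0\not\subseteq\supp(H_m)$, I would exploit the birationality of $\psi_0|_{E_0}$: since $C$ meets $E_0$ transversally at one point, the evaluation of $\CE_m$ at a general point $p\in U$ restricts non-trivially to the unique point $E_0\cap C_p$, so a generic choice of global section yields a divisor whose restriction to $E_0$ does not vanish identically. The hardest part will be the rigorous verification of the weak positivity hypothesis in the log setting and the matching of constants so that the section can be chosen not to contain $E_0$; this precisely mirrors \cite[Corollary 2.3 and the surrounding discussion]{Chen_Chen_Jiang} and should transfer almost verbatim. An alternative route would be to use adjunction to identify $(K_{X'/\Sigma_0}+E_0)|_{E_0}$ with the effective divisor $K_{E_0/\Sigma_0}$ (effective because $\psi_0|_{E_0}$ is birational), and then lift a section from $E_0$ to $X'$ via Kawamata--Viehweg vanishing applied to $L-E_0$ where $L=cm(K_{X'/\Sigma_0}+E_0)+c\psi_0^*A$; this avoids invoking weak positivity but requires more care with singularities of $E_0$.
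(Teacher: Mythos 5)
Your overall strategy is the same as the paper's: the paper's entire proof of this lemma is the sentence ``the proof of \cite[Claim 4.9]{Chen_Chen_Jiang} works verbatim,'' and that proof is precisely the weak-positivity argument for $\CE_m=(\psi_0)_*\CO_{X'}(m(K_{X'/\Sigma_0}+E_0))$ that you reconstruct. However, the specific mechanism you give for guaranteeing $E_0\nsubseteq\supp(H_m)$ fails for $m=1$. On a general fiber $C$ of $\psi_0$ one has $(K_{X'/\Sigma_0}+E_0)|_C=K_C+P$ with $P=E_0\cap C$ and $g(C)=2$, so by Riemann--Roch $h^0(C,K_C+P)=2=h^0(C,K_C)$; hence $|K_C+P|=P+|K_C|$ and $P$ is a \emph{base point} of $|K_C+P|$. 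The evaluation of $\CE_1$ at the point $E_0\cap C_p$ is therefore identically zero at a general $p$, contrary to what you assert, and every section obtained from the multiplication map $\Sym^c\CE_1\to(\psi_0)_*\CO_{X'}(c(K_{X'/\Sigma_0}+E_0))$ vanishes at $P$ to order $c$ on every general fiber. Since the points $E_0\cap C_p$ sweep out a dense subset of $E_0$, every divisor $H_1$ produced this way contains $E_0$, which is exactly the conclusion you need to rule out. For $m\ge 2$ your argument is fine: $h^0(C,m(K_C+P)-P)=3m-2<3m-1=h^0(C,m(K_C+P))$, so the evaluation at $P$ is surjective and some power $s^c$ with $s(P)\ne 0$ survives the multiplication map.

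The gap is repairable, so the lemma is not in danger, but you must say how. For instance, deduce the case $m=1$ from the case $m=2$: if $H_2\sim 2c'(K_{X'/\Sigma_0}+E_0)+c'\psi_0^*A$ with $E_0\nsubseteq\supp(H_2)$, choose $c'$ large enough that $|c'A|\neq\emptyset$ and set $H_1:=H_2+\psi_0^*G$ for some $G\in|c'A|$; then $H_1\sim 2c'(K_{X'/\Sigma_0}+E_0)+2c'\psi_0^*A$ is of the required form with $c=2c'$, and $E_0\nsubseteq\supp(\psi_0^*G)$ because $\psi_0(E_0)=\Sigma_0$. (Alternatively, note that only $cm\ge 2$ matters if one works with sections of the full sheaf $\CE_{cm}\otimes\CO_{\Sigma_0}(cA)$ rather than the image of $\Sym^c\CE_m$, but then generic global generation of that sheaf has to be argued separately.) Your remaining caveats --- the log version of Viehweg's weak positivity for the non-klt pair $(X',E_0)$ and the possible singularities of $E_0$ --- are genuine technical points, but you flag them explicitly and defer to \cite{Chen_Chen_Jiang}, which is no less than what the paper itself does.
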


\begin{proof}
The proof of \cite[Claim 4.9]{Chen_Chen_Jiang} works verbatim in our setting, and we only need to replace $W$, $g$, $\mathbb{F}_a$ and $E_0$ therein by $X'$, $\psi_0$, $\Sigma_0$ and $E_0$ in our context. We leave the detailed proof to the interested reader.
\end{proof}

\begin{lemma}\label{lem: weak pseudo-effective}
For any nef $\QQ$-divisor $L$ on $X'$, we have
$$
\left((3\pi^*K_X-dF') \cdot D_0\cdot L\right) \ge 0,
$$
where $D_0$ is the divisor as in Lemma \ref{lem: D0}.
\end{lemma}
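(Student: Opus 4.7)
The plan is to adapt the asymptotic effectivity argument of \cite{Chen_Chen_Jiang}, using the family of effective divisors $\{H_m\}_{m\gg 0}$ provided by Lemma \ref{lem: construction of effective divisor (1,2) surface}, combined with an explicit algebraic rewriting of $3\pi^*K_X - dF'$ that exposes its nef and effective components.

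The first step is a key identity. Since $g(B)=1$ gives $K_B\sim 0$, the elliptic ruled surface $\Sigma_0=\PP_B(\CO_B\oplus\CO_B(-e_B))$ has $K_{\Sigma_0}\equiv -2s - p^*e_B$. Pulling back by $\psi_0$ and using $\psi_0^*p^*e_B = f'^*e_B \equiv dF'$ gives
$$
K_{X'/\Sigma_0} = K_{X'} - \psi_0^*K_{\Sigma_0} = \pi^*K_X + E_\pi + 2\psi_0^*s + dF'.
$$
Lemma \ref{lem: psi_0} yields $M \sim \psi_0^*(s+p^*e_B)=\psi_0^*s + f'^*e_B$, and combined with $\pi^*K_X = M + Z$ (the $T=0$ case of \eqref{eq: modification}) this produces the backbone identity
$$
3\pi^*K_X - dF' \equiv 2M + \psi_0^*s + 3Z.
$$

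Now fix a nef $\QQ$-divisor $L$. Expanding $((3\pi^*K_X - dF')\cdot D_0 \cdot L)$ via this identity together with the decomposition $\psi_0^*s = D_0 + R$, where $R\ge 0$ does not contain $D_0$, the non-negative contributions $2(M\cdot D_0\cdot L)$ (from nefness of $M$ and $L$), $(R\cdot D_0\cdot L)$, and the non-negative part of $3(Z\cdot D_0\cdot L)$ are collected first; the remaining terms to control are $(D_0^2\cdot L)$ together with any piece of $(Z\cdot D_0\cdot L)$ coming from components of $Z$ supported on $D_0$. For these, I invoke the asymptotic argument: provided $D_0\nsubseteq \mathrm{Supp}(H_m)$, the effectivity of $H_m|_{D_0}$ and nefness of $L|_{D_0}$ give $(H_m\cdot D_0\cdot L)\ge 0$; dividing by $cm$ and letting $m\to\infty$ yields
$$
\bigl((K_{X'/\Sigma_0} + E_0)\cdot D_0\cdot L\bigr) \ge 0,
$$
which, via the formula for $K_{X'/\Sigma_0}$ above, provides exactly the lower bound needed on the remaining contributions, closing the calculation.

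The main obstacle is ensuring $D_0\nsubseteq \mathrm{Supp}(H_m)$, since Lemma \ref{lem: construction of effective divisor (1,2) surface} only guarantees the analogous property for $E_0$. I would address this by revisiting \cite[Claim 4.9]{Chen_Chen_Jiang} and refining the construction of $H_m$ so that it avoids both $E_0$ and $D_0$: since $D_0$ appears with coefficient one in $\psi_0^*s$ and shares the role of a distinguished component with $E_0$, the same subtraction trick used to clear $E_0$ should adapt directly to clear $D_0$ as well. A secondary technical point is to account carefully for the $\pi$-exceptional components inside $Z$ and $E_\pi$ and their intersections with $D_0\cdot L$, using the relations from Lemma \ref{lem: E0}, so that the algebraic comparison between the asymptotic inequality and the target inequality closes without sign loss.
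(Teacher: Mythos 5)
Your backbone identity $3\pi^*K_X - dF' \equiv 2M + \psi_0^*s + 3Z$ is correct, and the overall strategy of feeding the $H_m$ from Lemma \ref{lem: construction of effective divisor (1,2) surface} into an asymptotic intersection bound is the right family of ideas, but there are two genuine gaps. First, you defer the central difficulty to a hoped-for refinement of Lemma \ref{lem: construction of effective divisor (1,2) surface} making $D_0 \nsubseteq \supp(H_m)$. That refinement is not a routine adaptation: the mechanism that clears $E_0$ from $\supp(H_m)$ in \cite[Claim 4.9]{Chen_Chen_Jiang} rests on $E_0$ being horizontal over $\Sigma_0$ (so that $(K_{X'/\Sigma_0}+E_0)|_{E_0}$ behaves like a relative canonical divisor of a generically finite cover of $\Sigma_0$), whereas $D_0$ is a component of $\psi_0^*s$ and therefore maps onto the curve $s$, i.e., it is vertical over $\Sigma_0$ with $(D_0\cdot C)=0$; the same subtraction trick has no reason to apply. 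The paper's proof deliberately avoids needing this: it forms $G_m = \frac{1}{cm}(H_m + cmN_- + ct_2Z - cmN_+)$, which is effective and numerically $3\pi^*K_X - dF' + O(1/m)$, and then kills the unknown coefficient $\mu_m = \coeff_{D_0}(G_m)$ asymptotically by intersecting with the $1$-cycle $E_0\cdot F'$, using $(G_m\cdot E_0\cdot F')=0$ together with $(D_0\cdot E_0\cdot F')=1$ from Lemma \ref{lem: D0} to get $0\le \mu_m \le -\frac{t_2}{m}(E_0^2\cdot F')\to 0$. This coefficient-control step is the heart of the lemma (and the reason Lemma \ref{lem: D0} computes $(D_0\cdot E_0\cdot F')$), and it is absent from your proposal.

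Second, even granting $D_0\nsubseteq\supp(H_m)$, your closing step is not carried out and does not appear to close. The asymptotic inequality you would obtain is $\left((K_{X'/\Sigma_0}+E_0)\cdot D_0\cdot L\right)\ge 0$, and since $K_{X'/\Sigma_0}+E_0 \equiv (3\pi^*K_X - dF') + (E_\pi + E_0) - 2Z$, transferring it to the target produces
$$
\left((3\pi^*K_X-dF')\cdot D_0\cdot L\right) \ge \left((E_\pi+E_0)\cdot D_0\cdot L\right) - 2(Z\cdot D_0\cdot L) - \left((E_\pi+E_0)\cdot D_0\cdot L\right) \cdot 0
$$
only after discarding terms of the wrong sign: $E_\pi+E_0$ is effective and does not contain $D_0$ (as $D_0$ is not $\pi$-exceptional), so $-\left((E_\pi+E_0)\cdot D_0\cdot L\right)\le 0$ enters with the unfavorable sign, and the possible $D_0$-multiple inside $Z$ contributes an uncontrolled multiple of $(D_0^2\cdot L)$, which can be negative. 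Equivalently, in your decomposition $2M+\psi_0^*s+3Z$ the residual quantity $(1+3\coeff_{D_0}(Z))(D_0^2\cdot L)$ is bounded below by the asymptotic inequality only up to subtracting non-negative terms such as $(\pi^*K_X\cdot D_0\cdot L)$ and $d(F'\cdot D_0\cdot L)$ that you cannot reabsorb. You need the paper's device of building a single \emph{effective} divisor numerically equal to $3\pi^*K_X - dF'$ up to $O(1/m)$ and then removing its $D_0$-component before intersecting with $D_0\cdot L$.
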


\begin{proof}
The proof is just a slight modification of that of \cite[Claim 4.10]{Chen_Chen_Jiang}. For the convenience of the reader, we give a detailed proof here.

By \eqref{eq: modification}, \eqref{eq: exceptional divisor} and Lemma \ref{lem: psi_0}, we have
\begin{align}
	K_{X'/\Sigma_0} + E_0 & = (\pi^*K_X + E_\pi) + \psi_0^*(2 s + p^*e_B) + E_0 \nonumber \\ 
	& =  \pi^*K_X + 2M - \psi_0^*(p^*e_B) + E_\pi + E_0  \label{eq: K+E_0} \\ 
	& = 3\pi^*K_X - \psi_0^*(p^*e_B) + E_\pi + E_0 - 2Z. \nonumber 
\end{align}
Write $E_\pi+E_0-2Z=N_{+}-N_{-}$, where $N_{+}$ and $N_{-}$ are both effective $\mathbb{Q}$-divisors with no common components. By Lemma \ref{lem: E0} (1), we deduce that $E_0 \nsubseteq \supp(N_{+})$ and $E_0 \nsubseteq \supp(N_{-})$.

Choose an ample divisor $A=t_1 p^*e_B + t_2 s$ on $\Sigma_0$, where $t_1>t_2$ are two positive integers such that $t_2 K_X$ is Cartier. Let $m$ be a positive integer such that $mK_X$ is Cartier. By Lemma \ref{lem: construction of effective divisor (1,2) surface}, there exists an integer $c>0$ and an effective divisor $H_m\sim cm (K_{X'/\Sigma_0}+E_0) +c \psi_0^*A$ such that $E_0\nsubseteq \mathrm{Supp}(H_m)$. Thus we have 
\begin{align*}
	& \hspace{16 pt} H_m+cmN_{-}+ct_2Z \\
	& \sim cm (K_{X'/\Sigma_0}+E_0) + c\psi_0^*A + cmN_{-} + ct_2Z\\
	& = cm\left(3\pi^*K_X-\psi_0^*(p^*e_B) + E_\pi + E_0 -2Z + N_{-}\right)+ c\psi_0^*A +ct_2Z \hfill \\ 
	& = cm\left(3\pi^*K_X-\psi_0^*(p^*e_B)\right)+c\left(t_1 \psi_0^*(p^*e_B)+t_2(\psi_0^*s + Z)\right)+cmN_{+}\\
	& = cm\left(3\pi^*K_X-\psi_0^*(p^*e_B)\right)+c(t_1-t_2)\psi_0^*(p^*e_B) + ct_2(M+Z) + cmN_{+} \\ 
	& = cm(3\pi^*K_X-\psi_0^*(p^*e_B))+c\left((t_1-t_2)\psi_0^*(p^*e_B) + t_2 \pi^*K_X\right) + cmN_{+}.
\end{align*}
Here the first equality is by \eqref{eq: K+E_0}, and the last two equalities are by \eqref{eq: M} and \eqref{eq: modification}, respectively. By Lemma \ref{lem: psi_0}, $p \circ \psi_0 = f' = f \circ \pi$. Thus $\psi_0^*(p^*e_B) = \pi^*(f^*e_B)$. This implies
\begin{align*}
	& \hspace{16 pt} H_m+cmN_{-}+ct_2Z - cmN_+ \\
	& \sim cm \pi^*(3K_X - f^*e_B) + c \pi^*\left(t_2K_X + (t_1-t_2)f^*e_B\right).
\end{align*}  
Note that $N_+$ is $\pi$-exceptional. We deduce that $cmN_{+}$ is contained in the fixed part of $|H_m+cmN_{-}+ct_2Z|$. In particular,  
$H_m+cmN_{-}+ct_2Z-cmN_{+}$ is effective.

Let $G_m = \frac{1}{cm}(H_m+cmN_{-}+ct_2Z-cmN_{+})$. Since $E_0 \nsubseteq \mathrm{Supp}(H_m) \cup \mathrm{Supp}(N_{+}) \cup \mathrm{Supp}(N_{-})$, by Lemma \ref{lem: E0} (1), $\coeff_{E_0} (G_m) = \frac{t_2}{m} \coeff_{E_0} (Z) = \frac{t_2}{m}$. By Lemma \ref{lem: D0} (2), 
$$
\left( \left(G_m-\frac{t_2}{m}E_0 \right)\cdot E_0 \cdot F'\right) \ge \mu_m (D_0 \cdot E_0\cdot F') = \mu_m,
$$
where $\mu_m = \coeff_{D_0}(G_m)$. Since $E_0$ is $\pi$-exceptional and both $G_m$ and $F'$ are $\pi$-trivial, $(G_m \cdot E_0 \cdot F')=0$. It follows that
$$
-\frac{t_2}{m}(E_0^2 \cdot F')\ge \mu_m \ge 0.
$$
In particular, $\lim\limits_{m \to \infty} \mu_m=0$. Thus for any nef $\QQ$-divisor $L$ on $X'$, we have
$$
\lim\limits_{m \to \infty} (G_m\cdot D_0\cdot L) = \lim\limits_{m \to \infty} \left((G_m-\mu_m D_0) \cdot D_0\cdot L\right) \ge 0.
$$
By the definition of $G_m$, the above inequality just implies that
\begin{align*}
	\left((3\pi^*K_X - dF') \cdot D_0 \cdot L\right) = \lim\limits_{m \to \infty} (G_m\cdot D_0\cdot L) \ge 0.
\end{align*}	
The proof is completed.
\end{proof}

\begin{prop} \label{prop: shaper Noether (1,2)-surface}
Suppose $g(B) > 0$ and $p_g(X) \ge 4$. If $T = 0$, $\deg \Sigma = 2$ and $d = p_g(X) - 1$, then we have
$$
K_X^3 > \frac{4}{3}d = \frac{4}{3}(p_g(X)-1).
$$
\end{prop}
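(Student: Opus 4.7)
The plan is to argue by contradiction, supposing $K_X^3 = \frac{4}{3}d$. Under this hypothesis, the equality case of Proposition~\ref{prop: equality} (applied with $t = 0$ and $g(B) = 1$) supplies all of the structural data: $K_{S_0}^2 = \tfrac{16d}{3}$, $\Gamma_{S_0}^2 = -\tfrac{2d}{3}$, $K_{S_0}\cdot \Gamma_{S_0} = \tfrac{2d}{3}$, and $(\pi^*(2K_X))|_S\sim_{\QQ}\sigma^*K_{S_0}$. Tracing the tight chain of inequalities in the proof of Proposition~\ref{prop: Noether (1,2)-surface}(1) also gives $K_X\cdot \Gamma = \tfrac{d}{3}$ together with $(\pi^*K_X)^2\cdot Z = 0$, so every component of $Z$ is $\pi$-exceptional (otherwise the bigness of $K_X$ on its pushforward would force a positive contribution).

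An immediate integrality observation rules out a large portion of cases: $S_0$ is smooth as the minimal model of the smooth $S$, so $K_{S_0}^2 = \tfrac{16d}{3}\in\ZZ$ and $\Gamma_{S_0}^2 = -\tfrac{2d}{3}\in\ZZ$, forcing $3\mid d$. Hence for $3\nmid d$ the assumed equality is already impossible and the strict inequality $K_X^3 > \tfrac{4d}{3}$ holds for free.

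For the remaining case $d = 3k$, I would invoke Lemma~\ref{lem: weak pseudo-effective} with $L = \pi^*K_X$ to obtain $(\pi^*K_X)^2\cdot D_0 \ge \tfrac{d}{3}$. A direct computation of the left-hand side uses three ingredients: (a) the identity $(s+dl)\cdot s = 0$ on $\Sigma_0$ forces $D_0\cap S = \emptyset$ for a general $S\in|M|$, hence $(\pi^*K_X)\cdot M\cdot D_0 = 0$; (b) Lemma~\ref{lem: D0} identifies $D_0|_{E_0}$ as a section of $\pi|_{E_0}\colon E_0\to \Gamma$, giving $(\pi^*K_X)\cdot E_0\cdot D_0 = K_X\cdot \Gamma = \tfrac{d}{3}$; and (c) writing $Z = E_0 + Z'$ with $Z'\ge 0$. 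These yield
\[
(\pi^*K_X)^2\cdot D_0 \;=\; \frac{d}{3} \;+\; (\pi^*K_X)\cdot Z'\cdot D_0.
\]

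To close the argument, the plan is to exploit the cone structure of $\Sigma$ -- specifically $s^2 = -d$ on $\Sigma_0$, which holds precisely because $g(B) > 0$ makes $\Sigma$ a cone over an elliptic normal curve of degree $d$. Pulling back yields the identity $(\pi^*K_X)\cdot D_0^2 = -d$, and decomposing via $Z = E_0 + Z'$ together with the ruled surface geometry of $E_0\to\Gamma$ (whose normal invariant is pinned down by $\deg c_1(\mathcal{N}_{\Gamma/X}) = -K_X\cdot\Gamma = -k$ through adjunction on the elliptic section $\Gamma$) constrains the possible numerical data. The final step is to reconcile these constraints with the structural equality data, ultimately showing that no consistent numerical configuration exists: in particular, the required Weierstrass-type section $\Gamma_{S_0}$ on the genus-$2$ fibration $S_0 \to B$ imposed by $K_{S_0}\equiv 2(\Gamma_{S_0} + 3k C_0)$ becomes incompatible with the self-intersection $(\pi^*K_X)\cdot D_0^2 = -d$ dictated by the cone. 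The main obstacle is this final intersection-theoretic reconciliation; the hypothesis $g(B) > 0$ is essential here, since for $g(B) = 0$ the surface $\Sigma$ would instead be a rational scroll, the identity $s^2 = -d$ would fail, and the obstruction would disappear.
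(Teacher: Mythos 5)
Your reduction to the equality case is legitimate: Proposition \ref{prop: Noether (1,2)-surface} (2) with $t=0$ and $g(B)=1$ already gives $K_X^3 \ge \frac{4}{3}d$, so only the equality needs to be excluded, and the integrality observation drawn from Proposition \ref{prop: equality} does correctly dispose of the case $3 \nmid d$. But the case $3 \mid d$ (which certainly occurs) is not actually settled. Applying Lemma \ref{lem: weak pseudo-effective} with $L = \pi^*K_X$ only yields $(\pi^*K_X)^2 \cdot D_0 \ge \frac{d}{3}$ and hence $K_X^3 \ge d + \frac{d}{3}$, i.e.\ exactly the non-strict bound you are trying to beat; no contradiction follows from this alone. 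The ``final intersection-theoretic reconciliation'' you defer is precisely the missing content, and the identities you propose to feed into it are not justified: $(\pi^*K_X)\cdot D_0^2 = -d$ conflates $D_0$ with the full pullback $\psi_0^*s$ (the computation $(\pi^*K_X)\cdot(\psi_0^*s)^2 = -d$ is fine, but $\psi_0^*s$ may well have components other than $D_0$, and Lemma \ref{lem: D0} only pins down the coefficient of $D_0$); likewise $(\pi^*K_X)\cdot E_0\cdot D_0 = K_X\cdot\Gamma$ should at best be ``$\ge$'', since Lemma \ref{lem: D0} only says the $1$-cycle $D_0\cdot E_0$ meets a general fiber once and does not exclude vertical components. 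You also cannot assume at this stage that $\Gamma$ lies in the smooth locus of $X$ (that is proved only later, in Theorem \ref{thm: |K_X|}, under the full Noether--Severi equality), so the normal-bundle computation along $\Gamma$ is not available. Similarly, $(\pi^*K_X)^2\cdot Z=0$ forces components of $Z$ to be contracted by the canonical model map, not necessarily by $\pi$.

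The paper obtains strictness from a source your argument never touches: since $g(B)=1$ we have $K_{X/B}=K_X$, so by Ohno's theorem $K_X-\epsilon F$ is nef for some $\epsilon>0$. Feeding $L=\pi^*K_X-\epsilon F'$ (rather than $\pi^*K_X$) into Lemma \ref{lem: weak pseudo-effective} and using $((\pi^*K_X)\cdot D_0\cdot F')=1$ gives $(\pi^*K_X)^2\cdot D_0\ge \frac{d}{3}+\epsilon$, whence $K_X^3\ge d+\frac{d}{3}+\epsilon>\frac{4}{3}d$ directly, with no case division and no contradiction argument. Without this ingredient, or a genuine substitute for it, your proposal does not close.
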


\begin{proof}
By \eqref{eq: modification} and \eqref{eq: M}, $\pi^*K_X \equiv dF' + \psi_0^*s + Z$. Thus
$$
K_X^3 \ge d \left((\pi^*K_X)^2 \cdot F'\right) + \left((\pi^*K_X)^2 \cdot (\psi_0^*s)\right) \ge d + \left((\pi^*K_X)^2 \cdot D_0\right),
$$
where $D_0$ is the unique divisor as in Lemma \ref{lem: D0}. Note that we have $g(B)=1$, we deduce that $K_{X/B} = K_X$. Thus $K_X - \epsilon F$ is nef for some $\epsilon > 0$ by \cite[Theorem 1.4 and Lemma 1.6]{Ohno}. By Lemma \ref{lem: D0} (2) and Lemma \ref{lem: weak pseudo-effective}, 
\begin{align*}
	0 & \le \left((3\pi^*K_X - dF')\cdot D_0\cdot (\pi^*K_X-\epsilon F')\right) \\
	& = 3 \left((\pi^*K_X)^2 \cdot D_0\right) -(3\epsilon + d) \left((\pi^*K_X) \cdot D_0 \cdot F'\right) \\
	& = 3\left((\pi^*K_X)^2 \cdot D_0\right) - (3\epsilon + d).
\end{align*}  
That is, $((\pi^*K_X)^2 \cdot D_0) \ge \frac{d}{3} + \epsilon$. It follows that
$$
K_X^3 \ge d + \frac{d}{3} + \epsilon > \frac{4}{3}d = \frac{4}{3}(p_g(X)-1).
$$
The proof is completed.
\end{proof}

\begin{remark} \label{rmk: K-cF nef}
	Keep the same assumption as in Proposition \ref{prop: shaper Noether (1,2)-surface}. If we further assume that $K_X-cF$ is nef for some $c > 0$, then the same proof leads to the inequality that
	$$
	K_X^3 \ge \frac{4}{3}(p_g(X)-1) + c.
	$$
\end{remark}

\subsection{The case when $T = 0$ and $\dim \Sigma = 1$}  We have the following result.

\begin{prop} \label{prop: canonical pencil}
Suppose that $T = 0$ and $\dim \Sigma = 1$. Then $\psi = f'$. Moreover, if $X$ is irregular, then $q(X) = 1$ and $h^2(X, \CO_X) = 0$.
\end{prop}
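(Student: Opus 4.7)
For the first claim $\psi = f'$, the strategy is to show that $\phi_M$ is constant on every general fiber $F'$ of $f'$. The restriction map $H^0(X',M)\to H^0(F',(\pi|_{F'})^*K_F)$ has image of dimension at most $h^0(F,K_F)=2$, so $\phi_M(F')$ is contained in a linear subspace of $\PP^{p_g(X)-1}$ of dimension at most one. Since $p_g(X)\ge 4$, the non-degenerate curve $\Sigma$ has $\deg\Sigma\ge p_g(X)-1\ge 3$ and therefore does not lie on any line. Hence $\phi_M(F')\subsetneq\Sigma$, and being connected of dimension at most one, must be a point. Consequently $\phi_M$ factors through $f'$, and the universal property of Stein factorization (both $f'$ and $\psi$ have connected fibers to smooth curves) identifies $\Sigma'$ with $B$ and $\psi$ with $f'$.

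Assume now that $X$ is irregular. A minimal $(1,2)$-surface has $q(F)=0$ (Horikawa), so the Albanese morphism of $X$ is constant on general fibers of $f$ and factors as $a=a_B\circ f$, realizing $\mathrm{Alb}(X)$ as a quotient of $\mathrm{Jac}(B)$; hence $q(X)\le g(B)$. The Leray spectral sequence for $f$, combined with the vanishing of $R^1 f_*\CO_X$ (which has generic rank $q(F)=0$ and is torsion-free), yields $q(X)=g(B)$, so $g(B)\ge 1$. The core step is to show $g(B)\le 1$. From $M=f'^*D$ with $h^0(B,D)=p_g(X)$, pushing forward produces a sub-line-bundle $L:=\CO_B(D)\hookrightarrow f_*\omega_X$ through which every global canonical section of $X$ factors. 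When $g(B)\ge 2$, compare $L$ with the sub-line-bundle $\omega_B\hookrightarrow f_*\omega_X$ arising from the natural inclusion $f^*\omega_B\hookrightarrow\omega_X$ of the relative cotangent sequence: either the two are distinct, in which case $L\oplus\omega_B\hookrightarrow f_*\omega_X$ is injective and contradicts $h^0(L)+h^0(\omega_B)=p_g(X)+g(B)>p_g(X)=h^0(f_*\omega_X)$; or they coincide, forcing $p_g(X)=g(B)\ge 4$, whereupon a further numerical analysis using Fujita's nefness of $f_*\omega_{X/B}$ together with Clifford-type inequalities on $B$ yields the required contradiction. Hence $g(B)=1$ and $q(X)=1$.

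Finally, for $h^2(X,\CO_X)=0$: by relative Grothendieck--Serre duality along fibers and $R^1 f_*\omega_X=0$, we have $R^2 f_*\CO_X\cong (f_*\omega_{X/B})^\vee$, and so $h^2(X,\CO_X)=h^0(B,(f_*\omega_{X/B})^\vee)$. On the elliptic curve $B$, the line bundle $L$ with $h^0(L)=p_g(X)\ge 4$ is non-special and satisfies $\deg L=p_g(X)$; Fujita's nefness of $f_*\omega_{X/B}$ then yields $\deg f_*\omega_{X/B}\ge\deg L=p_g(X)$. Since $\chi(f_*\omega_{X/B})=\deg f_*\omega_{X/B}$ on an elliptic curve and equals $p_g(X)-h^2(X,\CO_X)$, we conclude $h^2(X,\CO_X)\le 0$, hence equal to $0$.

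\textbf{Main obstacle.} The hardest step is ruling out $g(B)\ge 2$: the ``distinct sub-line-bundles'' case is immediate from the dimension count, but the ``coincident'' case (where $L=\omega_B$ and $p_g(X)=g(B)\ge 4$) requires a delicate combination of Fujita semipositivity and Clifford-type inequalities on $B$ to force the contradiction.
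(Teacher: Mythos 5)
Your proof of the first assertion ($\psi=f'$) is correct and essentially the paper's argument in different clothing: the paper observes $h^0(X',M-F')\ge p_g(X)-p_g(F)\ge 1$, so a member of $|M|$ contains $F'$ and hence $\psi$ contracts $F'$, then applies rigidity; your linear-span argument reaches the same conclusion. Your derivation of $h^2(X,\CO_X)=0$ \emph{once $g(B)=1$ is known} is also sound (it reduces to $h^2(X,\CO_X)=h^1(B,f_*\omega_X)$ via $R^1f_*\omega_X=0$, together with $\deg\CO_B(D)=p_g(X)\le\deg f_*\omega_{X/B}=\chi(f_*\omega_X)$). Note that the paper does not prove the second assertion at all: it cites \cite[Lemma 4.5 (i)]{Chen}, so you are attempting to reprove a quoted result.

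The genuine gap is in your exclusion of $g(B)\ge 2$. It rests on ``the sub-line-bundle $\omega_B\hookrightarrow f_*\omega_X$ arising from the natural inclusion $f^*\omega_B\hookrightarrow\omega_X$ of the relative cotangent sequence,'' and no such inclusion exists. The cotangent sequence gives $f^*\Omega^1_B\hookrightarrow\Omega^1_X$, whence $H^0(B,\omega_B)\hookrightarrow H^0(X,\Omega^1_X)$ (i.e.\ $q(X)\ge g(B)$); but $\omega_X=\Omega^3_X$, and a map $f^*\omega_B\to\omega_X$ amounts to a global section of $\omega_{X/B}$, which you are not given. With that, both branches of your dichotomy collapse; moreover the ``coincident'' branch is only asserted to follow from ``a further numerical analysis,'' which is exactly the content that needs proving. (There is also a secondary issue: two distinct sub-line-bundles of a rank-two sheaf on a curve need not inject as a direct sum unless they are generically independent.) A correct argument in the spirit you intend: from $\CO_B(D)\hookrightarrow f_*\omega_X=f_*\omega_{X/B}\otimes\omega_B$ one gets a sub-line-bundle $\CO_B(D-K_B)\hookrightarrow f_*\omega_{X/B}$; since $f_*\omega_{X/B}$ is nef of rank $2$ (Fujita), every sub-line-bundle has degree at most $\deg f_*\omega_{X/B}=\chi(f_*\omega_X)-(2g(B)-2)\le p_g(X)-2g(B)+2$, hence $\deg D\le p_g(X)$. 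If $g(B)\ge 2$, then Riemann--Roch (when $h^1(B,D)=0$) gives $\deg D=p_g(X)+g(B)-1>p_g(X)$, and Clifford (when $h^1(B,D)>0$) gives $\deg D\ge 2p_g(X)-2>p_g(X)$ since $p_g(X)\ge 4$; either way a contradiction. This, or the citation of Chen's lemma, is what is missing.
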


\begin{proof}
By the assumption, $p_g(X) \ge 4$. Note that a general fiber $F'$ cannot be contained in the base locus $Z$ of $|\rounddown{\pi^*K_X + f^*T}|$. Thus 
$$
h^0(X', M - F') = h^0(X', K_{X'} - F') \ge p_g(X) - p_g(F) \ge 1.
$$
Since $\dim \Sigma=1$, $\psi$ contracts every element in $|M|$ to points. Note that $h^0(X', M - F') > 0$. Thus $\psi$ contracts general fibers of $f'$ to points. Since $f'$ has connected fibers, we conclude that $\psi$ contracts every fiber of $f'$ by \cite[Lemma 1.6]{Kollar_Mori}. It follows that $\psi = f'$ and the general fiber of $\psi$ is $F'$. If $X$ is irregular, by \cite[Lemma 4.5 (i)]{Chen}, we have $q(X) = 1$ and $h^2(X, \CO_X) = 0$. The proof is completed.
\end{proof}


\section{Noether-Severi inequality for irregular $3$-folds} \label{section: Noether-Severi inequality}

The goal of this section is to prove the Noether-Severi inequality \eqref{eq: main1 inequality}.

\subsection{Noether-Severi inequality for $3$-folds with $(1,2)$-surface Albanese fibers}

Throughout this subsection, we always assume that $X$ is a minimal and irregular $3$-fold of general type with 
$$
f: X \to B
$$ 
the Albanese fibration of $X$ such that the general fiber $F$ of $f$ is a minimal $(1, 2)$-surface. Here $B$ is a smooth curve. By the same argument as in the proof of \cite[Proposition V.15]{Beauville}, we deduce that $B$ is of genus $g(B)=q(X)$. In the following, we fix a sufficiently ample divisor $T$ on $B$ as in Remark \ref{rmk: T} with $\deg T = t$.

\begin{lemma} \label{lem: h0K+T}
We have
$$
h^0(X, K_X + f^*T) = \chi(\omega_X) + 2t - (g(B) - 1).
$$
\end{lemma}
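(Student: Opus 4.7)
The plan is to reduce the computation to a Riemann-Roch calculation on the curve $B$ via the Leray spectral sequence, and then to identify $\deg f_*\omega_X$ with $\chi(\omega_X)+g(B)-1$.

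First I would use the projection formula to rewrite $R^i f_*\bigl(\omega_X\otimes f^*\CO_B(T)\bigr)\cong R^i f_*\omega_X \otimes \CO_B(T)$. For $T$ sufficiently ample, Serre vanishing yields $H^1(B, R^i f_*\omega_X\otimes\CO_B(T))=0$ for all $i$, so the Leray spectral sequence degenerates to give
\[
h^0(X,K_X+f^*T)=h^0(B,f_*\omega_X\otimes\CO_B(T))=\chi(B,f_*\omega_X\otimes\CO_B(T)).
\]
Since $f$ has connected fibers and the general fiber $F$ satisfies $p_g(F)=2$, we have $\rank f_*\omega_X = 2$, so Riemann-Roch on $B$ gives
\[
h^0(X,K_X+f^*T)=\deg f_*\omega_X + 2t + 2(1-g(B)).
\]

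Next I would compute $\deg f_*\omega_X$. Pick a resolution $\pi\colon \widetilde X\to X$ with $\tilde f := f\circ \pi$. Because $X$ has terminal (hence canonical) singularities, Grauert-Riemenschneider gives $\pi_*\omega_{\widetilde X}=\omega_X$ and $R^j\pi_*\omega_{\widetilde X}=0$ for $j>0$, so the Leray spectral sequence for $\pi$ yields $R^i f_*\omega_X=R^i\tilde f_*\omega_{\widetilde X}$ and $\chi(\omega_X)=\chi(\omega_{\widetilde X})$. By Koll\'ar's theorem each $R^i \tilde f_*\omega_{\widetilde X}$ is locally free on $B$ and its fiber at a general point of $B$ computes $H^i(F,\omega_F)$; since a minimal $(1,2)$-surface has $q(F)=0$ (it violates $K^2\ge 2p_g$), we get $R^1\tilde f_*\omega_{\widetilde X}=0$, while relative Serre duality gives $R^2\tilde f_*\omega_{\widetilde X}\cong\omega_B$.

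Applying Leray to compute $\chi(\omega_{\widetilde X})$ therefore yields
\[
\chi(\omega_X)=\chi(B,f_*\omega_X)+\chi(B,\omega_B)=\bigl(\deg f_*\omega_X+2(1-g(B))\bigr)+(g(B)-1),
\]
so $\deg f_*\omega_X=\chi(\omega_X)+g(B)-1$. Substituting into the Riemann-Roch formula from the first step gives $h^0(X,K_X+f^*T)=\chi(\omega_X)+2t-(g(B)-1)$, as desired. The only potentially delicate ingredient is the vanishing $R^1\tilde f_*\omega_{\widetilde X}=0$, which rests on the characteristic property $q(F)=0$ of $(1,2)$-surfaces; everything else is a routine application of the projection formula, Leray, Koll\'ar's theorem, relative duality and Riemann-Roch on a curve.
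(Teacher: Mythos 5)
Your proof is correct and follows essentially the same route as the paper's: both rest on Koll\'ar's theorem giving $R^1f_*\omega_X=0$ (via $q(F)=0$) and $R^2f_*\omega_X\cong\omega_B$, the Leray spectral sequence with Serre vanishing for ample $T$, and Riemann--Roch on $B$. The only difference is bookkeeping — you compute $h^0$ as $\chi(B,f_*\omega_X\otimes\CO_B(T))$ and separately extract $\deg f_*\omega_X$ from $\chi(\omega_X)$, while the paper kills $h^1$, evaluates $h^2$, and compares Euler characteristics on $X$ — and your reduction to a resolution before invoking Koll\'ar is if anything slightly more careful.
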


\begin{proof}
By \cite[Theorem 2.1]{Kollar_2}, $R^1f_*\omega_X$ and $R^2f_*\omega_X$ are both torsion free sheaves. Now $q(F)=0$. We deduce that $R^1f_* \omega_X = 0$. Moreover, $R^2f_*\omega_X = \omega_B$. Since $T$ is sufficiently ample, we have
$$
h^1(X, K_X + f^*T) = h^1(B, f_* \omega_X \otimes \CO_B(T)) = 0,
$$
and
$$
h^2(X, K_X + f^*T) = h^0(B, R^2f_*\omega_X\otimes\mathcal{O}_B(T)) = h^0(B, K_B+T) = t + g(B) - 1.
$$
Note that $\chi(\omega_F) = p_g(F) + 1 = 3$. Combine these results together, and it follows that 
\begin{align*}
	h^0(X, K_X + f^*T) & = \chi(\CO_X(K_X + f^*T)) - t - (g(B) - 1) \\
	& = \chi(\omega_X) + 2t - (g(B) - 1).
\end{align*}
Thus the proof is completed.
\end{proof}

\begin{lemma} \label{lem: error}
We have
$$
K_X^3 \ge \frac{4}{3} \chi(\omega_{X})+ \frac{2}{3}(q(X)-1) - \frac{4}{3}.
$$
\end{lemma}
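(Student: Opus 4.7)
The plan is to combine Proposition \ref{prop: Noether (1,2)-surface}(2) with the lower bound on $d$ coming from Lemma \ref{lem: d}(2) and the Euler characteristic formula in Lemma \ref{lem: h0K+T}. Since $X$ is irregular we have $g(B) = q(X) \ge 1$, and since $T$ is sufficiently ample the image $\Sigma = \PP_B(f_* \omega_{X/B})$ is a $\PP^1$-bundle over $B$, so $\dim \Sigma = 2$. Thus all the hypotheses of \S \ref{subsection: Noether (1,2)-surface} apply.

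First I would invoke Proposition \ref{prop: Noether (1,2)-surface}(2) to get
\[
K_X^3 \ge \frac{4}{3}d - \frac{8}{3}t + \frac{2}{3}(g(B) - 1).
\]
Next, since $d = (\deg \tau) \cdot (\deg \Sigma) \ge \deg \Sigma$, Lemma \ref{lem: d}(2) (whose second lower bound is valid whenever $g(B) \ge 1$ and $T$ is sufficiently ample) yields
\[
d \ge h^0(X, K_X + f^*T) + g(B) - 2.
\]
Substituting this into the previous inequality gives
\[
K_X^3 \ge \frac{4}{3}\bigl(h^0(X, K_X + f^*T) + g(B) - 2\bigr) - \frac{8}{3}t + \frac{2}{3}(g(B) - 1).
\]

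Finally, Lemma \ref{lem: h0K+T} lets me replace $h^0(X, K_X+f^*T)$ by $\chi(\omega_X) + 2t - (g(B) - 1)$. The terms involving $t$ cancel (the $\frac{8}{3}t$ from the first substitution cancels the $-\frac{8}{3}t$ from Proposition \ref{prop: Noether (1,2)-surface}(2)), and collecting the $g(B)$ contributions produces
\[
K_X^3 \ge \frac{4}{3}\chi(\omega_X) + \frac{2}{3}(g(B) - 1) - \frac{4}{3} = \frac{4}{3}\chi(\omega_X) + \frac{2}{3}(q(X) - 1) - \frac{4}{3}.
\]
Since each step is a direct appeal to a result already proved, there is no real obstacle; the only thing to be careful about is verifying that Lemma \ref{lem: d}(2)'s Riemann--Roch/Clifford bound $\deg \Sigma \ge h^0 + g(B) - 2$ does hold in the borderline case $g(B) = 1$ (it does, because $h^0(X, K_X+f^*T) \ge g(B) + 2$ by Remark \ref{rmk: T}, making the hyperplane-section argument go through).
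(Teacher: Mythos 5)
Your proposal is correct and follows the same route as the paper: apply Proposition \ref{prop: Noether (1,2)-surface}(2), bound $d \ge \deg\Sigma \ge h^0(X, K_X+f^*T) + g(B) - 2$ via Lemma \ref{lem: d}(2), and substitute $h^0(X, K_X+f^*T) = \chi(\omega_X) + 2t - (g(B)-1)$ from Lemma \ref{lem: h0K+T}; the arithmetic and cancellation of the $t$-terms check out. Your extra remark verifying the hypotheses of Lemma \ref{lem: d}(2) in the case $g(B)=1$ is a sensible precaution but the paper simply takes this for granted.
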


\begin{proof}
Consider the map 
$$
\phi_{K_X+f^*T}: X \dashrightarrow \Sigma \subset \PP^{h^0(X, K_X + f^*T) - 1}
$$ 
induced by $|K_X + f^*T|$. By the assumption on $T$, we have $\dim \Sigma = 2$. Combine Proposition \ref{prop: Noether (1,2)-surface} (2) and Lemma \ref{lem: d} (2) together. It follows that 
$$
K_X^3 \ge \frac{4}{3}h^0(X, K_X + f^*T) - \frac{8}{3} t + 2(g(B) - 1) - \frac{4}{3}.
$$
Then the result just follows from Lemma \ref{lem: h0K+T}.
\end{proof}

\begin{prop} \label{prop: Noether-Severi (1,2)-surface}
We have 
$$
K_X^3 \ge \frac{4}{3} \chi(\omega_X) + \frac{2}{3}(q(X)-1).
$$
\end{prop}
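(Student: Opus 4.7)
The plan is to apply Proposition \ref{prop: Noether (1,2)-surface}(2) with $T$ sufficiently ample, and to close the $\tfrac{4}{3}$ gap in Lemma \ref{lem: error} by replacing the generic projective bound of Lemma \ref{lem: d}(2) with an exact evaluation of $d = \deg \Sigma$ exploiting the $\PP^1$-bundle structure of $\Sigma$.

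I would first fix $T$ sufficiently ample on $B$ with $t = \deg T$. By Remark \ref{rmk: T}, the map $\phi_{K_X + f^*T}$ coincides with the relative canonical map $X \dashrightarrow \PP_B(\CE)$, where $\CE := f_*\omega_{X/B}$ is a rank-$2$ vector bundle. Since the canonical map of a minimal $(1,2)$-surface is a fibration by connected genus-two curves onto $\PP^1$, the restriction of the relative canonical map to any general Albanese fiber covers the full ruling over that point of $B$; hence the image $\Sigma$ equals the whole $\PP^1$-bundle $\PP_B(\CE)$, and in particular $\dim \Sigma = 2$. A general fiber of $\phi_M$ is a single connected genus-two curve, so $\phi_M$ already has connected fibers, the finite Stein factor $\tau$ is the identity, and therefore $d = \deg \Sigma$.

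Next I would evaluate $d$ directly on the $\PP^1$-bundle. The embedding $\Sigma \hookrightarrow \PP^{h^0(X, K_X + f^*T) - 1}$ is furnished by the complete linear system of $H := \CO_{\PP_B(\CE)}(1) \otimes \pi^*\omega_B(T)$, where $\pi : \PP_B(\CE) \to B$ is the projection. Letting $\xi = c_1(\CO(1))$ with $\xi^2 = \deg \CE$, intersection theory gives $H^2 = \deg \CE + 2(2g(B) - 2 + t)$, whereas the projection formula combined with Riemann--Roch on the curve $B$ (using $h^1 = 0$ for $T$ sufficiently ample) gives $h^0(X, K_X + f^*T) = h^0(B, \CE \otimes \omega_B(T)) = \deg \CE + 2(g(B) - 1) + 2t$. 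Subtracting produces the clean identity
\[
d \;=\; \deg \Sigma \;=\; h^0(X, K_X + f^*T) + 2(g(B) - 1).
\]

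Substituting into Proposition \ref{prop: Noether (1,2)-surface}(2) and applying Lemma \ref{lem: h0K+T} (so that the $\tfrac{8}{3}t$ contributions cancel) together with $q(X) = g(B)$ delivers
\[
K_X^3 \;\ge\; \tfrac{4}{3}\chi(\omega_X) + 2(q(X) - 1),
\]
which implies the claimed inequality. The only substantive point beyond routine bookkeeping is the sharp evaluation of $d$; the gain of up to $g(B)$ over Lemma \ref{lem: d}(2) translates into a gain of up to $\tfrac{4}{3}g(B)$ in the intersection inequality, comfortably absorbing the $-\tfrac{4}{3}$ error term of Lemma \ref{lem: error}.
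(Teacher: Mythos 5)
Your proof is correct, but it takes a genuinely different route from the paper. The paper keeps only the crude bound $\deg \Sigma \ge h^0(X, K_X+f^*T) + g(B) - 2$ from Lemma \ref{lem: d}(2) (obtained via Clifford/Riemann--Roch on a hyperplane section, throwing away $h^1(\Sigma,\CO_\Sigma)$), which yields the intermediate estimate $K_X^3 \ge \frac{4}{3}\chi(\omega_X) + \frac{2}{3}(q(X)-1) - \frac{4}{3}$ of Lemma \ref{lem: error}; it then kills the additive error $-\frac43$ by pulling back along degree-$k$ \'etale covers $B_k \to B$ and letting $k \to \infty$. You instead evaluate $\deg\Sigma$ exactly: since for $T$ sufficiently ample $\Sigma$ is the full scroll $\PP_B(f_*\omega_X \otimes \CO_B(T))$ embedded by the complete system $|\CO(1)|$ (this is exactly what Remark \ref{rmk: T} and the proof of Lemma \ref{lem: d}(2) assert, and is the one point your write-up leans on implicitly — very ampleness of $\CO(1)$ for $T \gg 0$), the Grothendieck relation and Riemann--Roch give $\deg\Sigma = h^0(X,K_X+f^*T) + 2(g(B)-1)$, and $d \ge \deg\Sigma$ always (your observation that $\deg\tau = 1$ is also correct, via connectedness of the canonical pencil on a $(1,2)$-surface, but is not needed for the inequality). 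Your arithmetic is right and in fact yields the stronger bound $K_X^3 \ge \frac{4}{3}\chi(\omega_X) + 2(q(X)-1)$, which implies the stated proposition since $q(X) \ge 1$, is consistent with the paper's Lemma \ref{lem: deg Sigma} when $g(B)=1$, and still forces $q(X)=1$ in the equality case. What each approach buys: yours is shorter, avoids the covering trick, and sharpens the constant; the paper's covering argument is more robust in that it only needs the soft degree bounds of Lemma \ref{lem: d} and would survive even if the relative canonical image failed to be the full embedded scroll.
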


\begin{proof}
Let $\pi_k: B_k \to B$ be any \'etale cover of degree $k > 1$ and let $X_k=X \times_{\pi_k} B_k$. Then we have the following commutative diagram:
$$
\xymatrix{
	X_k \ar[rr] \ar[d]_{f_k} & & X \ar[d]^f \\
	B_k \ar[rr]^{\pi_k} & & B 
}
$$
It is easy to see that the induced fibration $f_k: X_k \to B_k$ via $\pi_k$ is exactly the Albanese fibration of $X_k$, and the Albanese fiber of $X_k$ is a minimal $(1, 2)$-surface. By the Hurwitz formula,  $g(B_k) - 1 = k(g(B)-1)$. By Lemma \ref{lem: error}, we have
$$
K_{X_k}^3 \ge \frac{4}{3} \chi(\omega_{X_k}) + \frac{2}{3}(g(B_k)-1) - \frac{4}{3}.
$$
Since $X_k \to X$ is \'etale of degree $k$, the above inequality is equivalent to 
$$
K_{X}^3 \ge \frac{4}{3} \chi(\omega_{X}) + \frac{2}{3}(g(B)-1) - \frac{4}{3k}.
$$
Thus the proof is completed by taking $k \to \infty$.
\end{proof}

\subsection{Main theorem} Now we prove the main result in this section.

\begin{theorem} \label{thm: Noether-Severi}
Let $X$ be a minimal and irregular $3$-fold of general type. 
Then we have the following optimal inequality:
\begin{equation} \label{eq: Noether-Severi}
	K_X^3 \ge \frac{4}{3} \chi(\omega_{X}).
\end{equation}
If the equality holds, then $q(X)=1$, $h^2(X, \CO_X)=0$, and the general Albanese fiber of $X$ is a minimal $(1,2)$-surface.
\end{theorem}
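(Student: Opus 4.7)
The plan is to split the proof by the dimension of the Albanese image $a(X)$. If $\dim a(X) \ge 2$, the known Severi-type inequalities of \cite{Barja,Zhang} give much more than needed: for $\dim a(X) = 3$ one has $\vol(X) \ge 12 \chi(\omega_X)$, and for $\dim a(X) = 2$ the corresponding slope bounds still exceed $\tfrac{4}{3}$ comfortably. Both subcases yield strict inequality, so no equality case can arise there. From now on I may assume $\dim a(X) = 1$, whence the Stein factorization of $a$ is the Albanese fibration $f: X \to B$ over a smooth curve with $g(B) = q(X) \ge 1$, and a general fiber $F$ is a smooth surface of general type with $q(F) = 0$ (since $F \hookrightarrow X \to B$ is constant while $a$ factors through $B$). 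In particular $\chi(\omega_F) = p_g(F) + 1$.

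Next I distinguish according to whether the minimal model of $F$ is a $(1, 2)$-surface. In the affirmative case, Proposition~\ref{prop: Noether-Severi (1,2)-surface} yields directly
$$K_X^3 \ge \tfrac{4}{3}\chi(\omega_X) + \tfrac{2}{3}(q(X) - 1) \ge \tfrac{4}{3}\chi(\omega_X),$$
with equality forcing $q(X) = 1$. In the negative case, Noether's inequality $K_F^2 \ge 2 p_g(F) - 4$ combined with the classification of small minimal surfaces of general type with $q = 0$ gives $K_F^2/\chi(\omega_F) > \tfrac{1}{3}$; indeed the $(1,2)$-surfaces are the unique minimizers of this ratio. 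To upgrade this numerical gap into a strict $3$-fold inequality, I would rerun the setup of \S\ref{subsection: Noether (1,2)-surface} with a sufficiently ample $T$ on $B$: the horizontal base locus of $|K_{X/B}|$ now plays the role of $\Gamma$, and Noether's inequality applied on a general section $S \in |M|$ replaces the explicit $(-1)$-curve analysis of Lemma~\ref{lem: E0}. Combining the resulting slope estimate with Lemma~\ref{lem: h0K+T} (suitably generalized) and the étale-cover limit argument of Proposition~\ref{prop: Noether-Severi (1,2)-surface} then produces $K_X^3 > \tfrac{4}{3}\chi(\omega_X)$.

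For the equality case, which the previous paragraphs force to satisfy $\dim a(X) = 1$, $F$ a $(1,2)$-surface, $q(X) = 1$, and $B$ elliptic, the remaining claim $h^2(X, \CO_X) = 0$ comes from the Leray spectral sequence for $f$. Since $q(F) = 0$, Koll\'ar's torsion-freeness theorem gives $R^1 f_*\omega_X = 0$, and $R^2 f_*\omega_X = \omega_B = \CO_B$; together with Grothendieck duality this reduces $h^2(X, \CO_X)$ to $h^1(B, f_*\omega_X)$, and pushing the sharp equalities of Proposition~\ref{prop: equality} through an étale cover $B_k \to B$ of arbitrarily large degree, exactly as in the proof of Proposition~\ref{prop: Noether-Severi (1,2)-surface}, forces this $h^1$ to vanish. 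The principal obstacle will be the non-$(1,2)$ subcase of the second paragraph: the clean section-plus-$(-1)$-curve geometry that powers Lemma~\ref{lem: E0} and Proposition~\ref{prop: equality} must be replaced by a case-by-case treatment of the possible horizontal base loci of $|K_{X/B}|$, and one must extract a genuine slope gain from the Noether gap $K_F^2/\chi(\omega_F) - \tfrac{1}{3} > 0$ efficiently enough to survive the étale-cover limit.
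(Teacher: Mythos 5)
Your skeleton for the main inequality (reduce to Albanese dimension one, then split on whether the general fiber is a $(1,2)$-surface, then apply Proposition \ref{prop: Noether-Severi (1,2)-surface} in the affirmative case) matches the paper's, but the two cases you leave open are exactly where the paper leans on external or nontrivial input, and your substitutes do not close them. First, for Albanese dimension $\ge 2$ but not maximal, the Severi inequalities of Barja and Zhang you cite are proved only for maximal Albanese dimension; a $3$-fold with $\dim a(X)=2$ is not covered by them, and ``the corresponding slope bounds still exceed $\frac{4}{3}$ comfortably'' is an assertion, not a proof. Second, and more seriously, for Albanese fibrations whose general fiber is not a $(1,2)$-surface the paper simply cites \cite[Theorem 1.8]{Hu_Zhang}, which gives $K_X^3 \ge 2\chi(\omega_X)$ in all non-$(1,2)$ cases (including the intermediate Albanese dimensions). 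You propose instead to re-derive a strict bound from the fiberwise Noether gap $K_F^2/\chi(\omega_F) > \frac13$ by ``rerunning'' \S\ref{subsection: Noether (1,2)-surface} with the $(-1)$-curve analysis of Lemma \ref{lem: E0} replaced by a case-by-case study of the horizontal base locus. That is essentially a proposal to reprove the relative Noether inequality of \cite{Hu_Zhang}, a substantial theorem in its own right, and your sketch does not carry it out — as you yourself concede by calling it ``the principal obstacle.'' As written, the inequality \eqref{eq: Noether-Severi} is therefore not established outside the $(1,2)$-fiber case.

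The claim $h^2(X,\CO_X)=0$ is also not proved. Your reduction $h^2(X,\CO_X)=h^1(B,f_*\omega_X)$ via Leray and $R^1f_*\omega_X=0$ is correct, but Proposition \ref{prop: equality} is a statement about the geometry of a surface $S_0$ inside a birational model and gives no direct handle on $h^1(B,f_*\omega_X)$; saying that ``pushing the sharp equalities through an \'etale cover forces this $h^1$ to vanish'' names no mechanism. The paper's actual argument is quite different: it passes to an \'etale cover $X_k$ with $p_g(X_k)\ge 4$ and examines the \emph{canonical} map (i.e.\ $T=0$). If the canonical image is a curve, Proposition \ref{prop: canonical pencil} gives $h^2=0$ directly; if it is a surface, then $d\ge p_g(X_k)-1$ by Lemma \ref{lem: d}(1), and the strict inequality $K_{X_k}^3>\frac43(p_g(X_k)-1)$ of Proposition \ref{prop: shaper Noether (1,2)-surface} — which rests on the whole weak-pseudo-effectivity machinery of Lemmas \ref{lem: D0}--\ref{lem: weak pseudo-effective} — contradicts $K_{X_k}^3=\frac43\chi(\omega_{X_k})$ unless $h^2(X_k,\CO_{X_k})=0$. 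None of this apparatus appears in your outline, and without it (or a genuine replacement) the vanishing of $h^2$ is unsupported.
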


\begin{proof}
If the general Albanese fiber of $X$ is a $(1,2)$-surface, then \eqref{eq: Noether-Severi} follows from Proposition \ref{prop: Noether-Severi (1,2)-surface}. Otherwise, by \cite[Theorem 1.8]{Hu_Zhang}, we have a stronger inequality $K_X^3 \ge 2 \chi(\omega_{X})$. Therefore, \eqref{eq: Noether-Severi} always holds. By the example constructed in \cite[Section 3]{Hu_Zhang}, \eqref{eq: Noether-Severi} is optimal.

From now on, we assume that $K_X^3 = \frac{4}{3} \chi(\omega_{X})$. Then $K_X^3 < 2\chi(\omega_X)$. By \cite[Theorem 1.8]{Hu_Zhang} again, the general Albanese fiber of $X$ is a $(1, 2)$-surface. Then by Proposition \ref{prop: Noether-Severi (1,2)-surface}, $q(X)=1$.

Since $\chi(\omega_X) = \frac{3}{4}K_X^3 > 0$, we have $\chi(\omega_X)\ge 1$.  Let $\pi'_k: X_k \to X$ be an \'etale cover of $X$ of degree $k \ge 4$. Then we still have $K_{X_k}^3 = \frac{4}{3} \chi(\omega_{X_k})$. By Proposition \ref{prop: Noether-Severi (1,2)-surface} again, $q(X_k) = 1$. Since $\chi(\omega_{X_k}) = k\chi(\omega_{X}) \ge 4$, we deduce that $p_g(X_k) \ge \chi(\omega_{X_k}) - q(X_k) + 1 \ge 4$. Consider the canonical map of $X_k$ (i.e., taking $T = 0$ in \S \ref{section: geometry}). If the canonical image of $X_k$ is a curve, by Proposition \ref{prop: canonical pencil}, $h^2(X_k, \CO_{X_k}) = 0$. If the canonical image is a surface, by Lemma \ref{lem: d} (1), Proposition \ref{prop: Noether (1,2)-surface} (2) and Proposition \ref{prop: shaper Noether (1,2)-surface}, we have
$$
\frac{4}{3} \chi(\omega_{X_k}) = K^3_{X_k} > \frac{4}{3} (p_g(X_k) - 1) = \frac{4}{3} \left(\chi(\omega_{X_k}) + h^2(X_k, \CO_{X_k}) - 1\right).
$$
Thus we still have $h^2(X_k, \CO_{X_k}) = 0$. Since $\mathcal{O}_X$ is a direct summand of ${\pi'_k}_*\mathcal{O}_{X_k}$, we deduce that $h^2(X, \CO_X) = 0$. Thus the proof is completed.
\end{proof}

Theorem \ref{thm: Noether-Severi} has the following consequence.

\begin{coro} \label{coro: Severi type}
Let $X$ be a minimal and irregular $3$-fold of general type. Then we have the following optimal inequality 
$$
K_X^3 \ge \frac{4}{3} h^0_a(X, K_X).
$$
Moreover, the equality holds if and only if $K_X^3 = \frac{4}{3} \chi(\omega_X)$.
\end{coro}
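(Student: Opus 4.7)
The plan is to derive Corollary~\ref{coro: Severi type} from the Noether-Severi inequality (Theorem~\ref{thm: Noether-Severi}) by combining it with a Leray-Koll\'ar identity relating $\chi(\omega_X)$ and $h^0_a(X,K_X)$ obtained via Hacon's generic vanishing theorem applied to the Albanese map $a\colon X\to A$.

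By Hacon's theorem, each $R^i a_*\omega_X$ is a GV-sheaf on $A$, so for generic $\alpha\in \Pic^0(A)$ the higher cohomology $H^{i>0}(A,a_*\omega_X\otimes\alpha)$ vanishes, giving $h^0_a(X,K_X)=\chi(A,a_*\omega_X)$; Koll\'ar's decomposition of $Ra_*\omega_X$ then yields
\[
h^0_a(X,K_X)-\chi(\omega_X)=\sum_{i\ge 1}(-1)^{i+1}\chi(A,R^i a_*\omega_X).
\]
In the case most relevant to equality, when the general Albanese fiber is a $(1,2)$-surface with Stein factorization $X\xrightarrow{f}B\xrightarrow{g}A$, one has $R^1 f_*\omega_X=0$ (since $q(F)=0$), $R^2 f_*\omega_X=\omega_B$ by relative duality, and $R^{\ge 3}=0$. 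Since $g$ is finite, $R^i a_*\omega_X=g_*R^i f_*\omega_X$, and thus $\chi(A,R^1 a_*\omega_X)=0$ while $\chi(A,R^2 a_*\omega_X)=\chi(B,\omega_B)=q(X)-1$, so
\[
h^0_a(X,K_X)=\chi(\omega_X)-(q(X)-1).
\]
Combining with the sharper Noether-Severi bound from Proposition~\ref{prop: Noether-Severi (1,2)-surface}, namely $K_X^3\ge \tfrac{4}{3}\chi(\omega_X)+\tfrac{2}{3}(q(X)-1)$, this yields
\[
K_X^3-\tfrac{4}{3}h^0_a(X,K_X)\ge 2(q(X)-1)\ge 0,
\]
with equality if and only if $q(X)=1$, which by Theorem~\ref{thm: main1} is exactly when $X$ lies on the Noether-Severi line.

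For the remaining cases in which the general Albanese fiber is not a $(1,2)$-surface, the stronger bound $K_X^3\ge 2\chi(\omega_X)$ of \cite[Theorem 1.8]{Hu_Zhang} is the key input. In the maximal Albanese dimension case, all $R^{i>0}a_*\omega_X$ vanish generically, so $h^0_a=\chi$ and the inequality is strict. The main obstacle I foresee is the case $\dim a(X)=2$, where $\chi(A,R^1 a_*\omega_X)\ge 0$ may force $h^0_a\ge \chi(\omega_X)$; here one has to exploit Hu-Zhang together with slope inequalities for the fibration $X\to Y'$ over the Stein base $Y'$ (itself a surface of general type of maximal Albanese dimension) to absorb the extra contribution and conclude $K_X^3>\tfrac{4}{3}h^0_a(X,K_X)$ strictly. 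Granting this, the equality in the Severi-type bound forces $X$ into the $(1,2)$-surface Albanese-fiber case with $q(X)=1$, i.e., onto the Noether-Severi line, which completes the equality characterization in Corollary~\ref{coro: Severi type}.
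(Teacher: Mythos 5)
Your handling of the Albanese-dimension-one case with $(1,2)$-surface fibers is correct and essentially the paper's argument: $h^0_a(X,K_X)=\chi(f_*\omega_X)=\chi(\omega_X)-(q(X)-1)$ (via Hacon and $R^1f_*\omega_X=0$, $R^2f_*\omega_X=\omega_B$), combined with Proposition \ref{prop: Noether-Severi (1,2)-surface}, and the equality analysis via Theorem \ref{thm: Noether-Severi} is fine. The problems are in the remaining cases. Most seriously, you explicitly leave the case $\dim a(X)=2$ unproved (``Granting this\ldots''). The paper disposes of \emph{all} Albanese dimensions at least two in one line by quoting Barja's unconditional bound $K_X^3\ge 4h^0_a(X,K_X)$ for irregular $3$-folds whose Albanese dimension is not one (\cite[Corollary B]{Barja}); no slope inequality for the Stein factorization over a surface is needed. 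Without that input, or a worked-out substitute, your proof is incomplete exactly where you flag the obstacle.

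There is also a subtler gap in the Albanese-dimension-one case where the general fiber $F$ is not a $(1,2)$-surface. Your proposed key input $K_X^3\ge 2\chi(\omega_X)$ does not obviously yield $K_X^3\ge\frac{4}{3}h^0_a(X,K_X)$: here $h^0_a(X,K_X)=\chi(f_*\omega_X)$, while
$$
\chi(\omega_X)=\chi(f_*\omega_X)-\chi(R^1f_*\omega_X)+\chi(\omega_B),
$$
and $\chi(R^1f_*\omega_X)\ge 0$ can be positive when $q(F)>0$, so $h^0_a(X,K_X)$ may exceed $\chi(\omega_X)$ and the implication $2\chi(\omega_X)\ge\frac{4}{3}\chi(f_*\omega_X)$ is not automatic. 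The paper instead works relatively: it uses $K_{X/B}^3\ge 2\deg f_*\omega_{X/B}$ (\cite[Theorem 1.6]{Hu_Zhang}) together with $K_X^3=K_{X/B}^3+6\chi(\omega_B)K_F^2$, $\chi(f_*\omega_X)=\deg f_*\omega_{X/B}+p_g(F)\chi(\omega_B)$ and the surface Noether inequality $3K_F^2\ge p_g(F)$ to conclude $K_X^3\ge 2\chi(f_*\omega_X)=2h^0_a(X,K_X)$ directly. You would need this (or an equivalent relative estimate) to close this case as well.
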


\begin{proof}
When the Albanese dimension of $X$ is not one, by \cite[Corollary B]{Barja}, $K_X^3 \ge 4h^0_a(X, K_X)$. Thus we only need to treat the case when $X$ is of Albanese dimension one.

Let $f: X \to B$ be the Albanese fibration of $X$, where $B$ is a smooth projective curve of genus $g(B) = q(X) \ge 1$. Denote by $F$ a general fiber of $f$. By \cite{Hacon}, $f_* \omega_X$ is a generic vanishing sheaf on $B$. We deduce that
\begin{equation} \label{eq: ha}
	h^0_a(X, K_X) = \chi(f_* \omega_X).
\end{equation}

Suppose first that $F$ is not a $(1, 2)$-surface. By \cite[Theorem 1.6]{Hu_Zhang},
$$
K_{X/B}^3 \ge 2 \deg f_*\omega_{X/B}.
$$
Note that 
$$
K_X^3 = K_{X/B}^3 + 6(g(B) - 1) K_F^2 = K_{X/B}^3 + 6 \chi(\omega_B) K^2_F.
$$ 
By the Riemann-Roch theorem, 
$$
\chi(f_* \omega_X) = \deg f_* \omega_{X} - p_g(F) \chi(\omega_B) = \deg f_* \omega_{X/B} + p_g(F) \chi(\omega_B).
$$
By the Noether inequality, it is easy to check that $3K_F^2 \ge p_g(F)$. Combine the above results with \eqref{eq: ha}. It follows that
$$
K_X^3 \ge 2 \chi(f_*\omega_X) = 2 h^0_a(X, K_X).
$$
Now suppose $F$ is a $(1, 2)$-surface. As is shown in the proof of Lemma \ref{lem: h0K+T},  $R^1f_* \omega_X = 0$ and $R^2f_* \omega_X = \omega_B$. By Theorem \ref{thm: Noether-Severi}, we deduce that
\begin{equation} \label{eq: comparison}
	K_X^3 \ge \frac{4}{3} \chi(\omega_X) = \frac{4}{3} \chi(f_*\omega_X) + \frac{4}{3} \chi(\omega_B) \ge \frac{4}{3} \chi(f_*\omega_X).
\end{equation}
Thus by \eqref{eq: ha}, we always have
\begin{equation} \label{eq: Severi type'}
	K_X^3 \ge \frac{4}{3} h^0_a(X, K_X).
\end{equation}

If the equality in \eqref{eq: Severi type'} holds, then \eqref{eq: comparison} becomes an equality. Thus $K_X^3 = \frac{4}{3} \chi(\omega_X)$. On the other hand, if $K_X^3 = \frac{4}{3} \chi(\omega_X)$, by Theorem \ref{thm: Noether-Severi}, the general Albanese fiber $F$ is a $(1, 2)$-surface and $q(X) = 1$. Thus \eqref{eq: comparison} becomes an equality, so does \eqref{eq: Severi type'}. The proof is completed.
\end{proof}


\section{$3$-folds on the Noether-Severi line: more properties} \label{section: property}

Throughout this section, we assume that $X$ is a minimal and irregular $3$-fold of general type with $K_X^3 = \frac{4}{3} \chi(\omega_{X})$. Let
$$
a: X \to \mathrm{Alb}(X)
$$
be the Albanese morphism of $X$. By Theorem \ref{thm: Noether-Severi}, $B: = \mathrm{Alb}(X)$ is a smooth curve of genus one, and $a$ is the Albanese fibration of $X$. We will also fix a sufficiently ample divisor $T$ on $B$ as in Remark \ref{rmk: T}, and denote $t = \deg T$.

Consider the map $\phi_{K_X+a^*T}$ induced by $|K_X + a^*T|$. We will keep on using the same notation as in \S \ref{subsection: setting (1,2)-surface}. Recall the following commutative diagram:
$$
\xymatrix{
& & X' \ar[d]_{\pi} \ar[lld]_{a'} \ar[rr]^{\psi} \ar[drr]^{\phi_M} & &  \Sigma' \ar[d]^{\tau}  \\
B & & X  \ar@{-->}[rr]_{\phi_{K_X + a^*T}} \ar[ll]^a  & & \Sigma         
}
$$
Here all the notation are the same as in \S \ref{subsection: setting (1,2)-surface}, except that we replace $f$ and $f'$ therein by $a$ and $a'$. Let $F$ be a general fiber of $a$. Since $p_g(F) = 2$, by the choice of $T$, we know that $\phi_{K_X+a^*T}$ is the relative canonical map of $X$ with respect to $a$. Moreover, $\Sigma \simeq \PP_B(a_* \omega_X)$ is a smooth elliptic ruled surface contained in $\PP^{h^0(X, K_X + a^*T) - 1}$. We still write $d = (\deg \tau) \cdot (\deg \Sigma)$.

All the above notation and facts will be used throughout this section. 

\subsection{Relative canonical image of $X$} \label{subsection: canonical image}
We have the following lemma.

\begin{lemma} \label{lem: deg Sigma}
    Keep the notation as above. Then the morphism $\tau: \Sigma' \to \Sigma$ is an isomorphism, and 
    $$
    d = \deg \Sigma = h^0(X, K_X + a^*T) = \chi(\omega_X) + 2t.
    $$
\end{lemma}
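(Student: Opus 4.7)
The plan is to squeeze the quantity $d$ between two bounds that coincide precisely when the Noether-Severi equality holds, forcing every inequality in sight to be an equality.

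First, since $g(B)=1$, Lemma \ref{lem: h0K+T} immediately gives $h^0(X, K_X + a^*T) = \chi(\omega_X) + 2t$, which settles the second equality of the statement on the nose. So the real task is to prove $d = \deg \Sigma = h^0(X, K_X + a^*T)$ and then conclude $\tau$ is an isomorphism.

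For the lower bound, I will invoke Lemma \ref{lem: d} (2): since $g(B)=1>0$ and $T$ is sufficiently ample in the sense of Remark \ref{rmk: T}, one has
\[
\deg \Sigma \;\ge\; h^0(X, K_X + a^*T) \;=\; \chi(\omega_X) + 2t.
\]
For the upper bound, I will apply Proposition \ref{prop: Noether (1,2)-surface} (2), again using $g(B)=1$, to obtain
\[
K_X^3 \;\ge\; \frac{4}{3}d - \frac{8}{3}t.
\]
Substituting the Noether-Severi equality $K_X^3 = \frac{4}{3}\chi(\omega_X)$ and rearranging yields $d \le \chi(\omega_X) + 2t$.

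Now, since $d = (\deg \tau)\cdot(\deg \Sigma) \ge \deg \Sigma$, the two bounds collapse into the chain
\[
\chi(\omega_X) + 2t \;\le\; \deg \Sigma \;\le\; d \;\le\; \chi(\omega_X) + 2t,
\]
so every term is equal and, in particular, $\deg \tau = 1$. Finally, because $\tau: \Sigma' \to \Sigma$ is the finite part of the Stein factorization, $\psi$ has connected fibers so $\Sigma'$ is normal (actually this is standard: the Stein factorization produces a normal intermediate variety), and $\Sigma \simeq \PP_B(a_*\omega_X)$ is smooth, the finite birational morphism $\tau$ is an isomorphism by Zariski's main theorem. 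There is essentially no obstacle here beyond carefully invoking the right pieces of \S\ref{section: geometry}; the substance was already loaded into Proposition \ref{prop: Noether (1,2)-surface} and Lemma \ref{lem: d}.
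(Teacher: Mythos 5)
Your proposal is correct and follows essentially the same route as the paper: the paper chains $K_X^3 \ge \frac{4}{3}d - \frac{8}{3}t \ge \frac{4}{3}h^0(X,K_X+a^*T) - \frac{8}{3}t = \frac{4}{3}\chi(\omega_X)$ via Proposition \ref{prop: Noether (1,2)-surface}~(2), Lemma \ref{lem: d}~(2) and Lemma \ref{lem: h0K+T}, and forces equality throughout, which is exactly your squeeze. Your added remark that $\deg\tau=1$ plus normality of $\Sigma'$ and $\Sigma$ yields that $\tau$ is an isomorphism is a correct (and slightly more explicit) justification of a step the paper leaves implicit.
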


\begin{proof}
Since now $g(B) = 1$, we have
$$
K_X^3 \ge \frac{4}{3} d - \frac{8}{3} t \ge \frac{4}{3} h^0(X, K_X + a^*T) - \frac{8}{3} t = \frac{4}{3} \chi(\omega_X).
$$
Here the first inequality is from Proposition \ref{prop: Noether (1,2)-surface} (2), the second is from Lemma \ref{lem: d}, and the third equality is by Lemma \ref{lem: h0K+T}. As $K_X^3 = \frac{4}{3} \chi(\omega_X)$, the above inequalities must be equalities. Thus $\deg \tau = 1$ and $d = \deg \Sigma = h^0(X, K_X + a^*T)$. The proof is completed.
\end{proof}

\subsection{Cartier index of $X$} The main result here is that $X$ is Gorenstein.
\begin{theorem} \label{thm: Cartier}
Let $X$ be a minimal and irregular $3$-fold of general type with $K_X^3 = \frac{4}{3} \chi(\omega_{X})$. Then $X$ is Gorenstein. It follows that $X$ is factorial.
\end{theorem}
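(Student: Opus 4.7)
\emph{Step 1 (reduction to $\Gamma$).} First I would show that any singular point of $X$ not on the base-locus section $\Gamma$ is automatically Gorenstein. For $T$ sufficiently ample, the base locus of $|K_X + a^*T|$ equals exactly $\Gamma$: its horizontal part is $\Gamma$ by construction in \S \ref{subsection: setting (1,2)-surface}, while the vanishing $R^1 a_* \omega_X = 0$ (used in the proof of Lemma \ref{lem: h0K+T}) eliminates vertical base components. Hence for any singular point $p \notin \Gamma$, a general $S^X \in |K_X + a^*T|$ avoids $p$, so $K_X + a^*T$ is locally trivial at $p$; since $a^*T$ is Cartier, $K_X$ is Cartier at $p$. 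This reduces the problem to the finitely many terminal singular points of $X$ lying on $\Gamma$.

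\emph{Step 2 (points on $\Gamma$).} Next I would apply Proposition \ref{prop: equality}(3): on a general smooth $S \in |M|$, $(\pi^*(2K_X + a^*T))|_S \sim_{\QQ} \sigma^* K_{S_0}$, where the right-hand side is an integer Cartier divisor because $S_0$ has Du Val (hence Gorenstein) singularities. Combined with $a^*T$ being Cartier on $X$, this forces $(2\pi^*K_X)|_S$ to be $\QQ$-linearly equivalent to an integer Cartier divisor on $S$. Exploiting the base-point-freeness of $|M|$ together with the coefficient-$1$ property of the $\pi$-exceptional divisor $E_0$ from Lemma \ref{lem: E0} (which controls the local behaviour of $\pi$ along $\Gamma$), I would track the fractional contributions to conclude that $(2\pi^*K_X)|_S$ is itself integral. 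This says $2K_X$ is Cartier in a neighbourhood of $\Gamma$, and combined with Step 1, $2K_X$ is Cartier on all of $X$.

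\emph{Step 3 (upgrade to index $1$).} Finally I would rule out Cartier index exactly $2$ via Reid's plurigenera formula. The only terminal threefold singularities of index $2$ are of analytic type $\frac{1}{2}(1,1,1)$, each contributing $\ell(Q, 2) = \frac{1}{4}$. With $K_X^3 = \frac{4}{3}\chi(\omega_X)$ and $\chi(\CO_X) = -\chi(\omega_X)$, the formula becomes $P_2(X) = \frac{11}{3}\chi(\omega_X) + \frac{n}{4}$, where $n$ counts such singular points. Combining this with $3 \mid \chi(\omega_X)$ (forced by the bounded denominator of $K_X^3 \in \frac{1}{m^3}\ZZ$) and the integrality of $P_2(X)$ forces $n = 0$. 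Hence $K_X$ is Cartier; factoriality then follows from the known fact that $\QQ$-factorial Gorenstein terminal threefold singularities are factorial. The main obstacle is Step 2: extracting integer coefficients for $(2\pi^*K_X)|_S$ from a mere $\sim_{\QQ}$ equivalence, which controls divisor classes only modulo $\QQ$-rational equivalence, will require a careful local analysis near $\Gamma$ using Lemma \ref{lem: E0}. If this route proves awkward, an alternative bypass is a direct local analytic argument at each $p \in \Gamma$, using the structural constraint that $\Gamma$ is a smooth section of a fibration by $(1,2)$-surfaces meeting each smooth general fiber $F$ at the unique smooth base point of $|K_F|$, which severely restricts the possible singularity type at $p$.
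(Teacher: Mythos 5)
Your proposal takes a genuinely different route from the paper, but it has gaps at every stage, and the one you flag yourself (Step 2) is fatal.

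The paper does not attempt to localize the non-Gorenstein points at all. It writes $h^0(X, 2K_X+2a^*T) = \frac{1}{2}K_X^3 + 3\chi(\omega_X) + 8t + l_2(X)$ via Reid's Riemann--Roch and Kawamata--Viehweg vanishing, where $l_2(X)=0$ iff $X$ is Gorenstein; it then pulls back along a multiplication map $\mu_k$ on $B$ to make $l_2 = k^2 l_2(X) \ge 6$, and finally bounds $h^0(X, 2K_X+2a^*T)$ \emph{from above} by $\frac{11}{3}\chi(\omega_X)+8t+5$ by splitting the restriction to a general $S \in |M|$ into three pieces ($u_0$, $u_1$, and $h^0(2K_{X'}+2a'^*T-2M)=1$), each controlled by surface geometry and Proposition \ref{prop: equality}. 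The contradiction $\frac{11}{3}\chi+8t+6 \le \frac{11}{3}\chi+8t+5$ finishes it. The key point is that the correction term scales quadratically under \'etale base change while the slack in the upper bound stays bounded; no local analysis of singularities is needed.

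Concrete problems with your route. \emph{Step 2:} knowing that $(\pi^*(2K_X+a^*T))|_S \sim_{\QQ} \sigma^*K_{S_0}$ is an integral Cartier class on a surface $S$ living on the resolution $X'$ says nothing about the Cartier index of $K_X$ at an isolated terminal point $p \in \Gamma$. The Cartier index is a local invariant of the germ $(X,p)$, and restriction of a Weil divisor class to a surface section through $p$ (even a general member of a big linear system) does not detect it; for instance the $\QQ$-linear equivalence only constrains the class in $\Pic(S)\otimes\QQ$, and $E_0$ having coefficient $1$ in $Z$ and $E_{\pi}$ gives no handle on the local class group at $p$. There is no known mechanism to ``descend'' integrality from $S$ to a neighbourhood of $\Gamma$ in $X$, and I do not see how to supply one. \emph{Step 3:} even granting index $\le 2$, Reid's formula gives $P_2(X) = \frac{11}{3}\chi(\omega_X) + \frac{n}{4}$ with $n$ the number of $\frac{1}{2}(1,1,1)$ points in the basket; integrality of $P_2$ together with $3 \mid \chi(\omega_X)$ forces only $n \equiv 0 \pmod 4$, not $n=0$. \emph{Step 1:} the identity $\baselocus|K_X+a^*T| = \Gamma$ is Theorem \ref{thm: |K_X|}(1) of the paper, proved \emph{after} and \emph{using} Theorem \ref{thm: Cartier} (see Lemma \ref{lem: no fixed part}, which needs $S_X$ and $Z_X$ to be Cartier); your substitute justification via $R^1a_*\omega_X = 0$ does not rule out vertical base components or embedded base points on singular fibers, since that vanishing controls cohomology of $a_*\omega_X$, not the surjectivity of $a^*a_*\omega_X(T) \to \omega_X(a^*T)$ along special fibers. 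So the reduction to points of $\Gamma$ is circular as stated.
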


\begin{proof}
Suppose $X$ is Gorenstein. Then by \cite[Lemma 5.1]{Kawamata}, $X$ is factorial. Thus to prove the theorem, it suffices to prove that $X$ is Gorenstein. Recall the following Riemann-Roch formula
$$
\chi(\omega_X^{[2]}) = h^0(X, 2K_X) = \frac{1}{2}K_X^3 + 3 \chi(\omega_{X}) + l_2(X) \footnote{In the Riemann-Roch formula, $\omega_X^{[2]}$ means the reflexive sheaf $\mathcal{O}_X(2K_X)$. }
$$
in \cite[Corollary 10.3]{Reid}.  Here the correction term $l_2(X) = 0$ if and only if $X$ is Gorenstein. By the Kawamata-Viehweg vanishing theorem, we have
$$
h^0(X, 2K_X + 2a^*T) = \chi(\CO_X(2K_X + 2a^*T)) = \chi(\omega_X^{[2]}) + 2t \chi(\omega_F^{\otimes 2}).
$$
Since $F$ is a $(1, 2)$-surface, $\chi(\omega_F^{\otimes 2}) = K_F^2 + \chi(\CO_F) = 4$. Thus we deduce that
\begin{equation} \label{eq: relative bicanonical}
	h^0(X, 2K_X+2a^*T) = \frac{1}{2}K_X^3 + 3\chi(\omega_X) + 8t + l_2(X).
\end{equation}
In the following, we prove in steps that $l_2(X) > 0$ would lead to a contradiction. 

\textbf{Step 0}. From now on, suppose $l_2(X) > 0$. For any $k \in \ZZ_{>0}$, let $\mu_k: B \to B$ be the multiplication map by $k$ on $B$, and let $X_k = X \times_{\mu_k} B$. We have
$$
h^0(X_k, 2K_{X_k}) = k^2 h^0(X, 2K_X), \quad K_{X_k}^3 = k^2 K_X^3, \quad \chi(\omega_{X_k}) = k^2 \chi(\omega_X).
$$
This implies that $l_2(X_k) = k^2l_2(X)$. Thus replacing $X$ by $X_k$ for a sufficiently large $k$, we may assume that $l_2(X) \ge 6$. 

Recall that $|M| = \movable|\rounddown{\pi^*(K_X + a^*T)}|$. Denote by $C$ a general fiber of $\psi = \phi_M$. Set $|M_0| = \movable|2K_{X'} + 2 a'^*T|$, $|M_1| = \movable|2K_{X'}+2a'^*T - M|$ and $|M_2|=\movable|2K_{X'} + 2a'^*T - 2M|$. Replacing $X'$ by a further blow-up, we may assume that $|M_0|$, $|M_1|$ and $|M_2|$ are all base point free. By Bertini's theorem, we may take a smooth general member $S \in |M|$. Then $(a'^*T)|_S \equiv tC$. Moreover, it is easy to see that
\begin{align} 
	h^0(X, 2K_X + 2a^*T) & = h^0(X', 2K_{X'} + 2a'^*T) \nonumber \\
	& = u_0 + u_1 + h^0(X', 2K_{X'} + 2a'^*T - 2M), \label{eq: step 0}
\end{align}
where 
$$
u_i = \dim \mathrm{Im} \left(H^0(X', M_i) \to H^0(S, M_i|_S)\right) \quad (i=0, 1).
$$
Note that now we are in the equality case of Proposition \ref{prop: Noether (1,2)-surface} (2). By Proposition \ref{prop: equality} (2), (3) and Lemma \ref{lem: deg Sigma}, we have 
\begin{align} 
	\left((\pi^*K_X)|_S \right)^2 & = \left( \left. \left( \pi^* \left(K_X + \frac{1}{2} a^*T-\frac{1}{2} a^*T \right) \right) \right|_S  \right)^2  \nonumber \\	
	& = \left( \left. \left( \pi^* \left(K_X + \frac{1}{2} a^*T \right) \right) \right|_S  \right)^2 - t\left( (\pi^*K_X) \cdot C \right) \nonumber \\
	& = \frac{2}{3} (2 \deg \Sigma - t) - t \label{eq: step 0'} \\
	& = \frac{4}{3} \chi(\omega_X) + t. \nonumber
\end{align}

\textbf{Step 1}. In this step, we prove that
\begin{equation} \label{eq: step 1}
	u_0 \le \frac{8}{3} \chi(\omega_X)+6t + 2.
\end{equation}

Consider the complete linear system $|M_0|_S|$. By our assumption, it is base point free, thus induces a morphism $\phi_0: S \to \PP^{h^0(S, M_0|_S) - 1}$. If $\dim \phi_0(S) = 2$, by \cite[Lemma 1.8]{Ohno},
$$
4 \left((\pi^*K_X)|_S + t C \right)^2 \ge (M_0|_S)^2 \ge 2h^0(S, M_0|_S) - 4. 
$$
If $\dim \phi_0(S) = 1$, since $M_0|_S \ge M|_S$, the general fiber of $\phi_0$ is identical to $C$. Now $M_0|_S \equiv b C$, where $b \ge h^0(S, M_0|_S) - 1$. Thus by Lemma \ref{lem: E0} (3),
$$
2 \left((\pi^*K_X)|_S + tC\right)^2 \ge b \left((\pi^*K_X) \cdot C \right) \ge  h^0(S, M_0|_S) - 1. 
$$
Thus in both cases, we always have
$$
2 \left((\pi^*K_X)|_S + tC \right)^2 \ge h^0(S, M_0|_S) - 2.
$$
Together with \eqref{eq: step 0'}, it follows that
$$
u_0 \le h^0(S, M_0|_S) \le 2 \left((\pi^*K_X)|_S + tC \right)^2 + 2 = \frac{8}{3} \chi(\omega_X) + 6t + 2.
$$

\textbf{Step 2}. In this step, we prove that
\begin{equation} \label{eq: step 2}
	u_1 \le \chi(\omega_X)+2t + 2.
\end{equation}

Similarly as in Step 1, the complete linear system $|M_1|_S|$ induces a morphism $\phi_1: S \to \PP^{h^0(S, M_1|_S) - 1}$. If $\dim \phi_1(S) = 2$, by \cite[Lemma 1.8]{Ohno}, $(M_1|_S)^2 \ge 2h^0(S, M_1|_S) - 4$. It follows that
$$
4\left((\pi^*K_X)|_S + t C\right)^2 \ge (M_1|_S + M|_S)^2 \ge 2h^0(S, M_1|_S) - 4 + 2(M_1|_S \cdot M|_S).
$$
Note that $g(C)=2$. Since $\phi_1$ does not contract $C$, the linear system $|M_1|_S||_C$ induces a finite morphism on $C$. Note that $g(C) = 2$ by Lemma \ref{lem: g2}. We deduce that $(M_1|_S \cdot C) \ge 2$. Moreover, as in \S \ref{subsection: Noether (1,2)-surface} and by Lemma \ref{lem: deg Sigma}, we have $M|_S \equiv dC = (\chi(\omega_X)+2t) C$. Thus 
$$
(M_1|_S \cdot M|_S) \ge 2 \chi(\omega_X) + 4t .
$$ 
Combine the above inequalities with \eqref{eq: step 0'} together,  we deduce that 
$$
h^0(S, M_1|_S) \le 2 \left((\pi^*K_X)|_S + tC \right)^2 + 2 - 2 \chi(\omega_X) - 4t = \frac{2}{3}\chi(\omega_X)+2t + 2.
$$
Since $\chi(\omega_X) > 0$, we deduce that $u_1 \le \chi(\omega_X) + 2t + 2$.

If $\dim \phi_1(S) = 1$, since $M_1|_S \ge M|_S$, the general fiber of $\phi_1$ is just $C$. Thus $M_1|_S \equiv bC$, where $b \ge h^0(S, M_1|_S) - 1$. Note that $M|_S \equiv (\chi(\omega_X)+2t) C$ as before. Therefore, the divisor $2(\pi^*K_X)|_S - (b + \chi(\omega_X))C$ on $S$ is pseudo-effective. Let $\sigma: S \to S_0$ be the contraction morphism onto the minimal model of $S$. By Proposition \ref{prop: equality} (3), we deduce that $K_{S_0} - (t + b + \chi(\omega_X)) C_0$ is pseudo-effective, where $C_0 = \sigma_* C$. In the meantime, by Proposition \ref{prop: equality} (1) and Lemma \ref{lem: deg Sigma},  $K_{S_0} \equiv (2\chi(\omega_X)+3t) C_0 + 2\Gamma_{S_0}$, where $\Gamma_{S_0}$ is a section of the fibration $S_0 \to B$ with $g(\Gamma_{S_0}) = g(B) = 1$. Then
$$
3\Gamma_{S_0}^2 = \left((K_{S_0} + \Gamma_{S_0}) \cdot \Gamma_{S_0}\right) - \left( 2\chi(\omega_X)+3t \right) (\Gamma_{S_0} \cdot C_0) = -(2\chi(\omega_X)+3t).
$$
This implies that the divisor 
$$
K_{S_0} - \frac{1}{3} \left( 2 \chi(\omega_X)+3t 
\right) C_0 \equiv \frac{2}{3} 
\left( \left( 2\chi(\omega_X)+3t \right) C_0 + 3\Gamma_{S_0} \right)
$$
is nef. Therefore, it follows that
$$
\left(K_{S_0} - \frac{1}{3} \left( 2\chi(\omega_X)+3t \right) C_0 \right) \cdot \left(K_{S_0} - \left( t + b + \chi(\omega_X) \right) C_0 \right) \ge 0,
$$
i.e.,
$$
K_{S_0}^2 \ge \left(\frac{5}{3} \chi(\omega_X) + 2t+b\right) (K_{S_0} \cdot C_0).
$$
Note that $(K_{S_0} \cdot C_0) = 2$ and $K_{S_0}^2 = \frac{16}{3}\chi(\omega_X)+8t$ by Proposition \ref{prop: equality} (2) and Lemma \ref{lem: deg Sigma}. We deduce that $b \le \chi(\omega_X)+2t $. It follows that
$$
u_1 \le h^0(S, M_1|_S) \le b + 1 \le \chi(\omega_X) + 2t + 1.
$$

\textbf{Step 3}. In this step, we prove that 
\begin{equation} \label{eq: step 3}
	h^0(X', 2K_{X'}+2a'^*T - 2M) = 1.
\end{equation}

Suppose on the contrary that $h^0(X', 2K_{X'}+2a'^*T - 2M) \ge 2$. Then $|M_2|$ is base point free. Since $K_X$ is semi-ample and big, we have $((\pi^*K_X)^2 \cdot M_2) > 0$. Otherwise, any effective divisor linear equivalent to $M_2$ would be contracted by the pluricanonical morphism of $X$, which is a contradiction. Thus by \eqref{eq: step 0'}, we deduce that
$$
K_X^3 = \left((\pi^*K_X)^2 \cdot (K_{X'} + a'^*T) \right) - t > \left( (\pi^*K_X)|_S \right)^2 - t = \frac{4}{3} \chi(\omega_X).
$$
This is a contradiction. Thus $h^0(X', 2K_{X'}+2a'^*T - 2M) = 1$.

\textbf{Step 4}. Now we can finish the whole proof. By \eqref{eq: relative bicanonical}, we have
$$
h^0(X, 2K_X+2a^*T) = \frac{1}{2} K_X^3 + 3 \chi(\omega_{X}) + l_2(X)+8t \ge \frac{11}{3} \chi(\omega_X) + 8t + 6.
$$
On the other hand, by \eqref{eq: step 0}, \eqref{eq: step 1}, \eqref{eq: step 2} and \eqref{eq: step 3},
$$
h^0(X, 2K_X+2a^*T) \le \frac{11}{3} \chi(\omega_X)+8t + 5.
$$
This is a contradiction. Thus the whole proof is completed.
\end{proof}

\subsection{Relative canonical linear system of $X$} \label{subsection: K_X}
Recall in \S \ref{subsection: setting (1,2)-surface} that there is a canonical section $\Gamma$ of the fibration $a: X \to B$ whose intersection $\Gamma \cap F$ with $F$ is just the unique base point of $|K_F|$. Since $\Sigma$ is a surface, the restriction morphism
$$
H^0(X, K_X+a^*T) \to H^0(F, K_F)
$$ 
is surjective, which implies that $\Gamma$ is the only horizontal base locus of $|K_X+a^*T|$ with respect to $a$.

As in \S \ref{subsection: setting (1,2)-surface}, we have 
$$
\pi^*(K_X+a^*T) = M + Z, \quad K_{X'} = \pi^*K_X + E_{\pi}.
$$
Let $S \in |M|$ be a general member. Then $S$ is smooth, and $a'|_S: S \to B$ is a fibration with a general fiber $C$. As in \eqref{eq: EVZV}, we may write
$$
E_{\pi}|_S = \Gamma_S + E_V, \quad Z|_S = \Gamma_S + Z_V.
$$
Here $\Gamma_S$ is a section of $a'|_S$ with $\pi(\Gamma_S) = \Gamma$, and $E_V$ and $Z_V$ are the vertical parts of $E_{\pi}|_S$ and $Z|_S$ with respect to $a'|_S$, respectively. Denote by $\sigma: S \to S_0$ the contraction onto the minimal model of $S$. Note that the fibration $a'|_S$ descends to a fibration $S_0 \to B$ with a general fiber $C_0 = {\sigma}_* C$.

\begin{lemma} \label{lem: E effective}
The divisor $E_\pi|_S - Z|_S = E_V - Z_V$ is effective.
\end{lemma}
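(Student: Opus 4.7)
The plan is to identify $E_{\pi}|_S - Z|_S$ with a divisor that is manifestly effective up to $\QQ$-linear equivalence, and then use the negative definiteness of the $\sigma$-exceptional intersection form to upgrade this to an actual equality of divisors. The key inputs are the two parts of Proposition \ref{prop: equality} for the equality case that are already established.

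First, I would use adjunction together with the basic identities $S \sim M$, $K_{X'} = \pi^*K_X + E_{\pi}$, and the computation $\pi^*(2K_X + a^*T) = \pi^*K_X + (M + Z)$ (from $\pi^*(K_X + a^*T) = M + Z$) to write
\begin{align*}
K_S - (\pi^*(2K_X + a^*T))|_S &= \bigl((\pi^*K_X)|_S + E_{\pi}|_S + M|_S\bigr) - \bigl((\pi^*K_X)|_S + M|_S + Z|_S\bigr) \\
&= E_{\pi}|_S - Z|_S = E_V - Z_V.
\end{align*}
Applying Proposition \ref{prop: equality}(3), which gives $(\pi^*(2K_X + a^*T))|_S \sim_{\QQ} \sigma^*K_{S_0}$, this yields
$$ E_V - Z_V \sim_{\QQ} K_S - \sigma^*K_{S_0}. $$

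The right-hand side is an honest effective $\sigma$-exceptional divisor, since $\sigma \colon S \to S_0$ is the contraction to the (smooth) minimal model and is therefore a composition of contractions of $(-1)$-curves. On the other side, Proposition \ref{prop: equality}(1) states that $\sigma_*(E_V + Z_V) = 0$, so both $E_V$ and $Z_V$ (and hence their difference) are supported on the $\sigma$-exceptional locus.

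Finally, two $\sigma$-exceptional $\QQ$-divisors that are $\QQ$-linearly equivalent must coincide: $\QQ$-linear equivalence implies numerical equivalence, and the intersection form on $\sigma$-exceptional divisors is negative definite, which forces the difference (a numerically trivial exceptional divisor) to vanish. Therefore $E_V - Z_V = K_S - \sigma^*K_{S_0}$, which is effective, finishing the proof. I do not anticipate any real obstacle; the only subtlety is the passage from $\QQ$-linear to actual equality, handled by negative definiteness on the exceptional locus.
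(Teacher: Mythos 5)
Your proof is correct and follows the same main line as the paper's: the identical computation identifying $E_\pi|_S - Z|_S$ with $K_S - (\pi^*(2K_X+a^*T))|_S$, then Proposition \ref{prop: equality} (3) to get $\QQ$-linear equivalence with the effective exceptional divisor $K_S - \sigma^*K_{S_0}$, and Proposition \ref{prop: equality} (1) to see that $E_V$ and $Z_V$ are $\sigma$-exceptional. The only divergence is the final upgrade from $\QQ$-linear equivalence to an actual identity of divisors: the paper writes $nE_V \sim D + nZ_V$ with $D \ge 0$ and uses $h^0(S, nE_V) = h^0(S_0, \CO_{S_0}) = 1$ (so the linear system $|nE_V|$ consists of the single member $nE_V$) to force $nE_V = D + nZ_V$; you instead invoke negative definiteness of the intersection form on the $\sigma$-exceptional locus to kill the numerically trivial exceptional difference. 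Both mechanisms are standard and valid here, and yours in fact yields the slightly sharper conclusion $E_V - Z_V = K_S - \sigma^*K_{S_0}$ on the nose rather than just effectivity.
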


\begin{proof}
By \eqref{eq: modification} and \eqref{eq: exceptional divisor}, we have
$$
E_\pi|_S - Z|_S = (K_{X'} + S)|_S - \left(\pi^*(2K_X+a^*T)\right)|_S = K_S - \left(\pi^*(2K_X+a^*T) \right)|_S.
$$
Since $K_S \ge \sigma^*K_{S_0}$, by Proposition \ref{prop: equality} (3), $E_V - Z_V = E_\pi|_S - Z|_S$ is $\QQ$-linearly equivalent to an effective $\QQ$-divisor. In particular, there exist an integer $n > 0$ and an effective divisor $D$ on $S$ such that 
$$
nE_V \sim D + nZ_V.
$$ 
On the other hand, Proposition \ref{prop: equality} (1) also tells us that ${\sigma}_* E_V = 0$. It implies that $h^0(S, nE_V) = h^0(S_0, \CO_{S_0}) = 1$. Thus
$$
nE_V = D + nZ_V.
$$
It follows that $E_V - Z_V = \frac{1}{n}D$ is effective.
\end{proof}

\begin{lemma} \label{lem: no fixed part}
The linear system $|K_X+a^*T|$ has no fixed part.
\end{lemma}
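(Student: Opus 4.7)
The plan is to push the question to $X'$ and exploit the description of $Z|_S$ from the proof of Proposition \ref{prop: equality}. Since $X$ is Gorenstein and factorial by Theorem \ref{thm: Cartier}, $K_X+a^*T$ is Cartier, so the fixed divisor $F_X$ of $|K_X+a^*T|$ is a well-defined effective Weil divisor on $X$ satisfying $F_X=\pi_*Z$ (because $\pi^*F_X\le Z$ while the remaining components of $Z$ are $\pi$-exceptional, arising from resolution of the higher-codimensional base locus of the movable part). Hence it suffices to show that every prime component of $Z$ is $\pi$-exceptional. I will assume for contradiction that a prime divisor $G\le Z$ is not $\pi$-exceptional and treat separately the cases where $G$ is horizontal or vertical with respect to $a'$.

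For the horizontal case, restrict $Z$ to a general $S\in|M|$. By Lemma \ref{lem: E0}, the horizontal part of $Z|_S$ is the section $\Gamma_S=E_0|_S$ with coefficient one, so any horizontal component of $G|_S$ must coincide with a multiple of $\Gamma_S$, forcing $\Gamma_S\subseteq G$. As $S$ varies in the base-point-free system $|M|$, the sections $\Gamma_S=E_0\cap S$ sweep out $E_0$ because $\phi_M|_{E_0}:E_0\to\Sigma$ is surjective onto the surface $\Sigma$ by Lemma \ref{lem: E0}(2). Therefore $E_0\subseteq G$, and primality forces $G=E_0$, contradicting the assumption that $G$ is not $\pi$-exceptional.

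For the vertical case, $G|_S$ is vertical on $S$ with respect to $a'|_S$, so after writing $Z|_S=\Gamma_S+Z_V$ we have $\supp G|_S\subseteq\supp Z_V$. Combining Proposition \ref{prop: equality}(1) (which gives $\sigma_*(E_V+Z_V)=0$) with Lemma \ref{lem: E effective} (which gives $E_V-Z_V\ge 0$), one obtains $\supp Z_V\subseteq\supp E_V\subseteq S\cap\supp E_\pi$, so $G\cap S\subseteq G\cap\supp E_\pi$ for every general $S$. Since $G$ is not $\pi$-exceptional, $G\cap\supp E_\pi$ is a proper closed subset of $G$ of dimension at most one. On the other hand, Lemma \ref{lem: g2} factors $a'=p\circ\psi$ birationally through the ruling $p:\Sigma\to B$, so $\psi(G)$ lies in the $\PP^1$-fiber $p^{-1}(a'(G))$, and $\psi|_G$ must be non-constant since $\dim G=2$ exceeds the dimension of the general fiber $C$ of $\psi$. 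Consequently $\psi(G)=\PP^1$ and $|M||_G$ contains the base-point-free pencil $(\psi|_G)^*|H|_{\PP^1}|$, where $H$ is the hyperplane class on $\Sigma$. Hence the general $G\cap S$ varies through the fibers of $\psi|_G:G\to\PP^1$ and sweeps out $G$, so $G\cap S$ cannot lie inside the fixed proper subset $G\cap\supp E_\pi$; this yields the desired contradiction.

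I expect the vertical case to be the main technical obstacle. The heart of the argument---using Lemma \ref{lem: E effective} to trap $\supp Z_V$ inside the restriction of $E_\pi$ to $S$, and then extracting a contradiction through the moving character of $|M||_G$---requires careful attention to ensure that $\psi$ does not collapse the two-dimensional $G$ inside a jumping fiber. Ruling out such degenerate $\psi$-exceptional behavior along non-$\pi$-exceptional divisors is where the argument needs the most care, relying on the precise dimension analysis of $\psi$ furnished by Lemma \ref{lem: g2} together with the previously established fact that $F_X$ has no horizontal component (a preliminary step that follows from the restriction of $|K_X+a^*T|$ to a general Albanese fiber surjecting onto $|K_F|$).
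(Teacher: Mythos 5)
Your reduction to showing that every component of $Z$ is $\pi$-exceptional is sound, and your horizontal case can be closed (indeed more directly: the restriction of $|K_X+a^*T|$ to a general Albanese fiber $F$ surjects onto $|K_F|$, which has no fixed part, so the fixed part of $|K_X+a^*T|$ is automatically vertical). The genuine problem is the vertical case, and it is exactly the point you flag at the end without resolving. Your sweeping argument needs $|M|\big|_G$ to move on $G$, i.e.\ $\psi(G)$ to be at least a curve. The justification you give --- that $\psi|_G$ is non-constant because $\dim G=2$ exceeds the dimension of the general fiber $C$ of $\psi$ --- is a non sequitur: a morphism onto a surface with one-dimensional general fiber can perfectly well contract a divisor to a point inside a jumping fiber, and nothing in Lemma \ref{lem: g2} (which only gives $g(C)=2$ and the birational factorization of $a'$ through $\psi$) excludes this. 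If $\psi(G)$ is a point, then $M|_G\equiv 0$, a general $S\in|M|$ is disjoint from $G$, and your entire mechanism of detecting $G$ through $Z|_S$ collapses; there is no contradiction to extract. Since $K_X$ is only nef and big on the minimal model, a vertical non-$\pi$-exceptional divisor with $\left((K_X+a^*T)^2\cdot\pi(G)\right)=0$ is not a priori absurd, so this case cannot be dismissed.

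The paper closes precisely this hole by a different mechanism. It works downstairs on $X$: writing the putative fixed part as $Z_X$ and setting $S_X=\pi(S)$, it uses the Kawamata--Viehweg vanishing theorem (available because $X$ is Gorenstein by Theorem \ref{thm: Cartier}, so $S_X$ and $Z_X$ are Cartier) to get $H^1(X,\CO_X(-S_X-Z_X))=0$, hence that $S_X+Z_X$ is connected and $S_X\cap Z_X\neq\emptyset$. This guarantees a curve $A$ supported on $Z_X|_{S_X}$ no matter how $\phi_M$ behaves on the components of $Z_X$. Lemma \ref{lem: E effective} then forces $A\subseteq\pi(E_\pi)$, and a discrepancy-versus-multiplicity comparison along the blow-ups over $A$ in the construction of $\pi$ produces a $\beta$-exceptional divisor $E_A$ with $E_A|_S\neq 0$ and $\coeff_{E_A}(E_\pi-Z)<0$, contradicting Lemma \ref{lem: E effective} again. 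To salvage your approach you would need an independent argument ruling out non-$\pi$-exceptional divisorial components of $Z$ that $\psi$ contracts to points, and some form of this connectedness step appears unavoidable for that.
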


\begin{proof}
Suppose $|K_X+a^*T|$ has nonzero fixed part $Z_X$. Then $Z_X$ is vertical respect to $a$. Let $S_X = \pi(S)$. By Theorem \ref{thm: Cartier}, both $S_X$ and $Z_X$ are Cartier divisors on $X$. By the Kawamata-Viehweg vanishing theorem, we have $H^1(X, \CO_X(-S_X-Z_X)) = 0$. As a result, the natural restriction map 
$$
H^0(X, \CO_X) \to H^0(S_X+Z_X, \CO_{S_X+Z_X})
$$ 
is surjective. Thus $h^0(S_X+Z_X, \mathcal{O}_{S_X+Z_X})=1$, which implies that $S_X+Z_X$ is connected. In particular, we have $S_X \cap Z_X \neq \emptyset$. Let $A$ be an integral curve supported on $Z_X|_{S_X}$. By Lemma \ref{lem: E effective}, $E_\pi|_S - (\pi^*Z_X)|_S = (E_\pi|_S - Z|_S) + (Z|_S - (\pi^*Z_X)|_S)$ is effective. In particular, $A \subseteq \mathrm{Supp}(Z_X|_{S_X}) \subseteq \pi(E_\pi)$. We deduce that $A \subseteq \baselocus|S_X|$.

By the construction of $\pi$ in \S \ref{section: modification} and by induction, there is a $\beta$-exceptional prime divisor $E_A$ such that $E_A|_S \neq 0$, $\pi(E_A) = A$ and $\coeff_{E_A}(E_\pi - Z) = \coeff_{E_A}(E_\pi - \pi^*(S_X + Z_X)) < 0$. We deduce that $E_\pi|_S - Z|_S$ is not effective, which contradicts Lemma \ref{lem: E effective}. As a result, $|K_X+a^*T|$ has no fixed part.
\end{proof}

\begin{lemma} \label{lem: movable part normal}
A general member $S_X \in |K_X+a^*T|$ is normal with at worst canonical singularities. Moreover, $\pi|_S: S \to S_X$ factors through $\sigma: S \to S_0$.
\end{lemma}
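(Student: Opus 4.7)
The plan is to prove normality, the factorization, and the canonical-singularity property in three steps, using throughout the identity
\[
(\pi|_S)^*K_{S_X}\;\sim_\QQ\;\sigma^*K_{S_0}
\]
obtained by combining Proposition \ref{prop: equality} (3) with the adjunction formula $K_{S_X}=(K_X+S_X)|_{S_X}=(2K_X+a^*T)|_{S_X}$, which is valid because $X$ is Gorenstein by Theorem \ref{thm: Cartier}.

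First I would show $S_X$ is normal. As a Cartier divisor in the Gorenstein $3$-fold $X$, $S_X$ is Gorenstein and so satisfies Serre's $S_2$ condition. For $R_1$, observe that $\mathrm{Sing}(S_X)$ is contained in $\mathrm{Sing}(X)\cup\baselocus|K_X+a^*T|$; the first piece is zero-dimensional (isolated terminal singularities), and by Lemma \ref{lem: no fixed part} the base locus consists of the section $\Gamma$ together with codimension-two vertical components. At a general point of $\Gamma$ a local analysis --- exploiting that $\Gamma\cap F$ is a \emph{simple} base point of $|K_F|$ on a general fiber, since $\pi|_{F'}$ is a single blowup by Lemma \ref{lem: g2}, so that $|K_X+a^*T|$ has generically nonzero linear part modulo the ideal of $\Gamma$ --- shows that a general $S_X$ is smooth there. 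Hence $\mathrm{Sing}(S_X)$ is zero-dimensional, and Serre's criterion yields normality.

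Next I would establish the factorization. A direct calculation from $K_{X'}=\pi^*K_X+E_\pi$, the decompositions $E_\pi|_S=\Gamma_S+E_V$ and $Z|_S=\Gamma_S+Z_V$, and the relation $S\sim\pi^*(K_X+a^*T)-Z$ yields
\[
K_S\;\sim\;\bigl(\pi^*(2K_X+a^*T)\bigr)\big|_S+(E_V-Z_V).
\]
Combining with Proposition \ref{prop: equality} (3), the $\sigma$-discrepancy $E_\sigma:=K_S-\sigma^*K_{S_0}$ satisfies $E_\sigma\sim_\QQ E_V-Z_V$, and both sides are effective $\sigma$-exceptional $\QQ$-divisors (the right side by Lemma \ref{lem: E effective}). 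Now set $D:=K_S-(\pi|_S)^*K_{S_X}$. By the normality just established, $\pi|_S$ is an isomorphism outside its exceptional locus, so $D$ is supported on $\pi|_S$-exceptional curves and $(\pi|_S)_*D=0$. Since $D\sim_\QQ E_\sigma$, the pushforward $(\pi|_S)_*E_\sigma$ is an effective $\QQ$-divisor on the projective surface $S_X$ that is $\QQ$-linearly equivalent to zero, hence vanishes identically. Therefore every component of $E_\sigma$ is $\pi|_S$-exceptional. Since $\supp(E_\sigma)$ coincides with the set of $\sigma$-exceptional prime divisors (the standard positivity of the discrepancy for a birational contraction to a smooth minimal model), every $\sigma$-exceptional curve is $\pi|_S$-contracted, and hence $\pi|_S$ factors as $\pi|_S=\rho\circ\sigma$ for a unique morphism $\rho:S_0\to S_X$.

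Finally, applying $\sigma_*$ to the identity $(\pi|_S)^*K_{S_X}\sim_\QQ\sigma^*K_{S_0}$ via the projection formula yields $\rho^*K_{S_X}\sim_\QQ K_{S_0}$. Thus the $\rho$-discrepancy is a $\QQ$-principal divisor supported on the $\rho$-exceptional locus, and negative-definiteness of the intersection form there forces it to vanish. Hence $\rho$ is crepant, and $S_X$ has at worst canonical singularities. The main obstacle will be the local Bertini analysis along $\Gamma$ in the normality step; once that is in hand, the factorization and canonical-singularity assertions follow from the formal pushforward and negativity arguments described above.
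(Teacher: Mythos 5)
Your Step 2 and Step 3 are correct: the factorization via ``every $\sigma$-exceptional curve lies in $\supp(K_S-\sigma^*K_{S_0})$ and is killed by $\pi|_S$'' and the crepancy of $\rho$ via negative definiteness are a legitimate (and slightly reordered) version of the paper's argument, which instead first deduces canonicity from the effectivity of $K_S-(\pi|_S)^*K_{S_X}=E_\pi|_S-Z|_S$ (Lemma \ref{lem: E effective}) and then obtains the factorization from uniqueness of the minimal resolution. The genuine gap is in the normality step. After Bertini you must show $S_X$ is regular in codimension one along the entire one-dimensional part of $\baselocus|K_X+a^*T|$, and you yourself note that this base locus may contain vertical curves in addition to $\Gamma$ (the equality $\baselocus|K_X+a^*T|=\Gamma$ is only established later, in Theorem \ref{thm: |K_X|}, \emph{using} the present lemma, so it cannot be assumed here). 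Your local analysis treats only general points of $\Gamma$, where the restriction to a general fiber $F$ is available; nothing in your argument prevents a general $S_X$ from being singular along a vertical base curve contained in a special fiber of $a$, so the conclusion that $\mathrm{Sing}(S_X)$ is zero-dimensional does not follow.

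The missing ingredient is precisely the lemma you already invoke in Step 2. The paper argues: if $S_X$ had multiplicity at least two along an integral curve $A$ (vertical or not, including $\Gamma$), then by the construction of the modification $\beta$ there is a $\beta$-exceptional prime divisor $E_A$ with $\pi(E_A)=A$, $E_A|_S\neq 0$, and $\coeff_{E_A}(Z)>\coeff_{E_A}(E_\pi)$, i.e.\ $\coeff_{E_A}(E_\pi-Z)<0$; hence $E_\pi|_S-Z|_S$ would fail to be effective, contradicting Lemma \ref{lem: E effective}. This disposes of all curves uniformly and makes the separate Bertini analysis along $\Gamma$ unnecessary. With the normality step repaired in this way, the remainder of your proof goes through.
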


\begin{proof}
By Bertini's theorem, $S_X$ is integral. Moreover, since $X$ is Gorenstein, $S_X$ is Cohen-Macaulay. To prove that $S_X$ is normal, we only need to show that $S_X$ is smooth in codimension one. 

Suppose the singular locus of $S_X$ contains an integral curve $A$. Then $S_X$ has multiplicity at least two at a general point of $A$. Since $S$ is smooth, similarly to the proof of Lemma \ref{lem: no fixed part}, there is a $\beta$-exceptional divisor $E_A$ on $X'$ such that $\pi(E_A) = A$, $E_A|_S \neq 0$, and $\coeff_{E_A}(E_\pi - Z) < 0$. Thus $E_{\pi}|_S - Z|_S$ is not effective. However, it is impossible by Lemma \ref{lem: E effective}. Therefore, $S_X$ is normal. By the adjunction formula, $K_{S_X} = (2K_X + a^*T)|_{S_X}$. Then
$$
K_S - (\pi|_S)^*K_{S_X} = (K_{X'} + S)|_S - (\pi^* (2K_X + a^*T))|_S = E_\pi|_S - Z|_S,
$$
which is effective by Lemma \ref{lem: E effective}. Thus $S_X$ has at worst canonical singularities. By the uniqueness of the minimal model, the minimal resolution of $S_X$ is just $S_0$. The proof is completed.
\end{proof}

\begin{theorem} \label{thm: |K_X|}
The following statements hold: 
\begin{itemize}
	\item [(1)] $\baselocus|K_X+a^*T| = \Gamma$, and $\Gamma$ lies in the smooth locus of $X$.
	
	\item [(2)] A general member $S_X \in |K_X+a^*T|$ is smooth.
	
	\item [(3)] Let $\pi_\Gamma: Y \to X$ be the blow-up of $X$ along $\Gamma$. Then the linear system $|\pi_\Gamma^*(K_X+a^*T) - E_\Gamma|$ is base point free, where $E_\Gamma$ is the $\pi_\Gamma$-exceptional divisor.
\end{itemize}	
\end{theorem}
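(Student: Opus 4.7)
The plan is to prove (2) first via a clean argument using Proposition \ref{prop: equality} (1), then use (2) to establish (1), and finally deduce (3) from (1) by working on the blow-up.

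For (2), I aim to show that the minimal resolution $\rho: S_0 \to S_X$ of the canonical singularities of $S_X$ (provided by Lemma \ref{lem: movable part normal}) is an isomorphism. The approach is to identify the exceptional locus of $\pi|_S: S \to S_X$ and compare it with that of $\sigma$. Any $\pi|_S$-exceptional curve $A \subset S$ is contracted by $\pi$, so it lies in some $\pi$-exceptional divisor of $X'$; hence $A \subset \supp(E_\pi|_S) = \Gamma_S \cup \supp(E_V)$. Since $\Gamma \subset \baselocus|K_X + a^*T| \subseteq S_X$, the morphism $\pi|_S$ sends $\Gamma_S$ birationally onto the curve $\Gamma \subset S_X$, so $\Gamma_S$ is not contracted. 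This gives $\mathrm{Exc}(\pi|_S) \subseteq \supp(E_V)$. Conversely, Proposition \ref{prop: equality} (1) says $\sigma_* E_V = 0$, so $\supp(E_V) \subseteq \mathrm{Exc}(\sigma) \subseteq \mathrm{Exc}(\pi|_S)$, where the last inclusion is the factorization $\pi|_S = \rho \circ \sigma$. Thus all three sets coincide, which forces $\mathrm{Exc}(\rho) = \emptyset$ by comparing strict transforms; hence $\rho$ is an isomorphism and $S_X = S_0$ is smooth.

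For (1), with $S_X$ smooth, I would compute via adjunction and Proposition \ref{prop: equality} (1) that the restricted class $(K_X + a^*T)|_{S_X}$ is numerically $\Gamma + dC_0$, where $C_0$ is a general fiber of the induced fibration $S_X \to B$. The complete system $|dC_0 + \Gamma|$ on $S_0$ has $\Gamma$ in its base locus (since $\Gamma^2 < 0$ and $\Gamma$ is a section), and the residual $|dC_0|$ is base point free, being pulled back from $|dP|$ on the elliptic curve $B$ with $d \gg 0$. Kawamata-Viehweg vanishing gives $H^1(X, K_X + a^*T) = 0$, so the restriction map from $X$ to $S_X$ is surjective modulo a cokernel of dimension $q(X) = 1$; one argues that any extra base component arising from this codimension-one restriction would, on varying $S_X$, force a divisorial base locus on $X$, contradicting Lemma \ref{lem: no fixed part}. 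This gives $\baselocus|K_X + a^*T| = \Gamma$. To conclude that $\Gamma$ lies in the smooth locus of $X$, I plan to use the birational morphism $\phi_M|_{E_0}: E_0 \to \Sigma$ (generically one-to-one by Lemma \ref{lem: E0}, with $\Sigma$ a smooth $\PP^1$-bundle over $B$): any singular point $p \in \Gamma$ would introduce additional $\pi$-exceptional divisors meeting $E_0$ in curves that map into a single $\PP^1$-fiber of $\Sigma \to B$, producing positive-dimensional fibers of $\phi_M|_{E_0}$ and contradicting its generic birationality.

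For (3), granted (1), the blow-up $\pi_\Gamma: Y \to X$ has smooth total space near $E_\Gamma \simeq \PP(\mathcal{N}_{\Gamma/X})$. By the universal property of blow-ups, $\pi: X' \to X$ factors as $\pi = \pi_\Gamma \circ \mu$ for some surjective morphism $\mu: X' \to Y$, and $|M|$ on $X'$ is identified with the pullback under $\mu$ of $|\pi_\Gamma^*(K_X + a^*T) - E_\Gamma|$. Base point freeness of $|M|$ then descends to $|\pi_\Gamma^*(K_X + a^*T) - E_\Gamma|$ on $Y$, with the analysis of $E_0 \to E_\Gamma$ via $\phi_M|_{E_0}: E_0 \to \Sigma$ ruling out base points on $E_\Gamma$ itself. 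The main obstacle will be showing that $\Gamma$ lies entirely in the smooth locus of $X$: once this is established, the remaining parts follow quite directly from (2) together with standard vanishing and intersection arguments, but this last step requires a careful birational analysis linking the $\pi$-exceptional geometry over $\Gamma$ with the smooth ruled surface $\Sigma$.
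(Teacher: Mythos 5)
Your proof of part (2) is correct and genuinely different from the paper's. The paper proves (1) first, uses it to localize the possible singularities of $S_X$ to $\Gamma$, and then rules them out numerically: an exceptional $(-2)$-curve $E_p$ over a canonical singularity would satisfy $(K_{S_0}\cdot E_p)=0$, while $K_{S_0}\equiv 2\Gamma_{S_0}+(2d-t)C_0$ forces $(K_{S_0}\cdot E_p)\ge 2(\Gamma_{S_0}\cdot E_p)>0$. Your comparison of $\mathrm{Exc}(\pi|_S)$, $\supp(E_V)$ and $\mathrm{Exc}(\sigma)$ via $\sigma_*E_V=0$ bypasses (1) entirely and is arguably cleaner; you should just say explicitly that $\mathrm{Exc}(\pi)=\supp(E_\pi)$ because $X$ is $\QQ$-factorial (so the exceptional locus is purely divisorial) and terminal (so every exceptional divisor has positive discrepancy).

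The rest of the proposal has genuine gaps. In the first half of (1), your reason that $\Gamma$ lies in the base locus of the complete system $|(K_X+a^*T)|_{S_X}|$, namely ``$\Gamma^2<0$'', is not valid: the relevant quantity is $((\Gamma+dC_0)\cdot\Gamma)=\Gamma^2+d=\frac{1}{3}(\chi(\omega_X)+3t)>0$, so negativity forces nothing. Worse, your device for excluding extra base curves of the trace (``varying $S_X$ would force a divisorial base locus'') cannot work, because the base locus of the trace on a general $S_X$ is exactly $\baselocus|K_X+a^*T|$, a fixed closed set of dimension at most one; an extra base curve sitting inside a single fiber of $a$ sweeps out nothing as $S_X$ varies and does not contradict Lemma \ref{lem: no fixed part}. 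The missing idea is the paper's $1$-connectedness argument: $(\pi^*(K_X+a^*T))|_S\equiv dC+\Gamma_S+Z_1+Z_2$ is nef and big, hence $1$-connected, so a piece $Z_1$ of $Z_V$ contracted to a point off $\Gamma$ would satisfy $(Z_1\cdot(dC+\Gamma_S+Z_2))=0$, a contradiction. For the second half of (1), your argument produces no contradiction: $\phi_M|_{E_0}\colon E_0\to\Sigma$ is a birational morphism of surfaces and is perfectly free to contract curves, and in any case a curve of $E_0$ lying over each point of $\Gamma$ and mapping into a ruling of $\Sigma$ exists whether or not $X$ is singular there. The paper instead uses Theorem \ref{thm: Cartier} (factoriality) together with $(\Gamma\cdot F_1)=1$ for an arbitrary fiber $F_1$ of $a$. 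Finally, (3) as written is circular: base-point-freeness of $|M|$ on $X'$ does not descend to $|\pi_\Gamma^*(K_X+a^*T)-E_\Gamma|$, since $\mu^*M_\Gamma=M+(Z-\mu^*E_\Gamma)$ and the whole content of (3) is that the residual $Z-\mu^*E_\Gamma$ contributes no base locus after the single blow-up along $\Gamma$. The paper achieves this by writing $(K_X+a^*T)|_{S_X}=\Gamma+(a|_{S_X})^*D$ with $|D|$ free on $B$, so that $M_\Gamma|_{S_Y}$ is pulled back from $B$ and any base point would have to be simultaneously vertical and contained in the horizontal divisor $E_\Gamma$.
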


\begin{proof}
By Lemma \ref{lem: no fixed part}, $|K_X+a^*T|$ has no fixed part. Thus $\baselocus|K_X+a^*T| = \pi (Z \cap S)$. Thus to prove $\baselocus|K_X+a^*T| = \Gamma$, we only need to prove that $\Gamma = \pi(Z|_S)$. Since $Z|_S = \Gamma_S + Z_V$ and $\pi(\Gamma_S) = \Gamma$, it suffices to show that $\pi(Z_V) \subset \Gamma$.

By Proposition \ref{prop: equality} (1), $\sigma(Z_V)$ consists of finitely many points on $S_0$. Thus by Lemma \ref{lem: movable part normal}, $\pi(Z_V)$ also consists of finitely many points on $\pi(S)$. Suppose there is a point $p \in \pi(Z_V)$ but $p \notin \Gamma$. We may write 
$$
Z_V = Z_1 + Z_2,
$$
where $Z_1$ and $Z_2$ are effective, $\pi(Z_1) = p$, and $p \notin \pi(Z_2)$. By the argument in \S \ref{subsection: Noether (1,2)-surface}, now we have
$$
\left(\pi^*(K_X+a^*T)\right)|_S = M|_S + Z|_S \equiv d C + \Gamma_S + Z_1 + Z_2.
$$
However, since $Z_1$ does not intersect $Z_2$ or $\Gamma_S$, we deduce that 
$$
\left(Z_1 \cdot (dC + \Gamma_S + Z_2) \right) = 0.
$$
This is a contradiction, because $(\pi^*(K_X+a^*T))|_S$ is nef and big, thus $1$-connected. As a result, $\pi(Z_V) \subseteq \Gamma$ and $\baselocus|K_X+a^*T| = \Gamma$.

By Theorem \ref{thm: Cartier}, every Weil divisor on $X$ is Cartier. Take any fiber $F_1$ of $a$. Since $(\Gamma \cdot F_1) = 1$, there exists exactly one irreducible component $F_{1,0}$ of $F_1$ such that $F_{1,0} \cap \Gamma \neq \emptyset$ is a point. Moreover, $(\Gamma \cdot F_{1,0}) = 1$ and $\coeff_{F_{1,0}}(F_1) = 1$. It implies that $F_1$ is smooth at this point, so is $X$. Thus (1) is proved.

To prove (2), suppose $S_X$ is singular. By Bertini's theorem and (1), the singular locus of $S_X$ is contained in $\Gamma$. Let $p \in S_X$ be a singularity. By Lemma \ref{lem: movable part normal}, we have the minimal resolution $\sigma_0: S_0 \to S_X$ such that $K_{S_0} = \sigma_0^*K_{S_X}$. Let $E_p$ be the exceptional divisor on $S_0$ lying over $p$. Since $p$ is a canonical singularity, every irreducible component of $E_p$ is a $(-2)$-curve. In particular, $(K_{S_0} \cdot E_p) = 0$. Let $\Gamma_{S_0} \subset S_0$ be the strict transform of $\Gamma \subset S_X$ under $\sigma_0$. Then we have $(\Gamma_{S_0} \cdot E_p) > 0$. On the other hand, by Proposition \ref{prop: equality} (1), $K_{S_0} \equiv 2\Gamma_{S_0} + (2d - t) C_0$. Thus 
$$
(K_{S_0} \cdot E_p) = 2(\Gamma_{S_0} \cdot E_p) + (2d - t) (C_0 \cdot E_p) \ge 2(\Gamma_{S_0} \cdot E_p). 
$$
This is a contradiction. As a result, $S_X = S_0$ is smooth, and (2) is proved.

To prove (3), first note that $\Gamma$ is smooth. By (1) and (2), we have
$$
K_Y = \pi_\Gamma^*K_X + E_\Gamma, \quad \pi_\Gamma^*(K_X+a^*T) = M_\Gamma + E_\Gamma.
$$
Let $S_Y \in |M_\Gamma|$ be a general member. To prove that $|M_\Gamma|$ is base point free, it suffices to show that the restricted linear system $|M_\Gamma||_{S_Y}$ is base point free.

Suppose $\baselocus(|M_\Gamma||_{S_Y}) \neq \emptyset$. Since $S_X$ is smooth and $S_Y$ is the blow-up of $S_X$ along the curve $\Gamma$, the morphism $\pi_\Gamma|_{S_Y}: S_Y \to S_X$ is actually an isomorphism. Moreover, $(\pi_\Gamma|_{S_Y})^*\Gamma = E_\Gamma|_{S_Y}$. We also have a fibration $b|_{S_Y}: S_Y \to B$, where $b = a \circ \pi_\Gamma$. Now $S_X = S_0$. By Proposition \ref{prop: equality} (1) and (3), we have $(K_X+a^*T)|_{S_X} \sim_\QQ \frac{1}{2}(K_{S_X} + (a^*T)|_{S_X}) \equiv \Gamma + d C$. Note that $\Gamma$ lies in the base locus of $|K_X+a^*T||_{S_X}$. Therefore, we may write $(K_X+a^*T)|_{S_X} = \Gamma + (a|_{S_X})^*D$, where $|D|$ is base point free on $B$. Thus we deduce that 
$$
M_\Gamma|_{S_Y} = (\pi_\Gamma^*(K_X+a^*T))|_{S_Y} - E_\Gamma|_{S_Y} = (b|_{S_Y})^*D.
$$
This implies that $\baselocus(|M_\Gamma||_{S_Y})$ must be vertical curves with respect to $b|_{S_Y}$. On the other hand, note that by (1), $\baselocus|M_\Gamma| \subseteq E_\Gamma$. This implies that $\baselocus(|M_\Gamma||_{S_Y}) \subseteq E_\Gamma|_{S_Y}$ which is horizontal with respect to $b|_{S_Y}$. This is a contradiction. Thus $|M_\Gamma||_{S_Y}$ is base point free, and (3) is proved.
\end{proof}

\subsection{Fundamental group} In this subsection, we compute the topological fundamental group of smooth models of $X$ following the idea of Xiao in \cite{Xiao}.

\begin{theorem} \label{thm: fudamental group}
Let $X_1$ be a smooth model of $X$. Then the topological fundamental group
$$
\pi_1(X_1) \simeq \pi_1(B) \simeq \ZZ^2.
$$
\end{theorem}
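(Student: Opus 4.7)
The plan is to follow Xiao's strategy \cite{Xiao} for computing $\pi_1$ of a fibered variety: exhibit the Albanese fibration on a smooth model of $X$ as a fibration with (i) a section and (ii) simply connected general fiber, then conclude via a standard homotopy / Zariski--van Kampen argument that $\pi_1(X_1) \cong \pi_1(B) \cong \ZZ^2$. Since the topological fundamental group is a birational invariant among smooth projective varieties, it suffices to work with one convenient resolution $\rho: X_1 \to X$ that is an isomorphism over the smooth locus of $X$; as $X$ has only isolated (terminal, even Gorenstein by Theorem \ref{thm: Cartier}) singularities, such resolutions exist. The Albanese fibration $a: X \to B$ lifts to a fibration $a_1 := a \circ \rho: X_1 \to B$ with connected fibers. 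By Theorem \ref{thm: |K_X|}(1) the section $\Gamma \subset X$ lies in the smooth locus of $X$, so its strict transform remains a section of $a_1$; in particular $a_1$ has no multiple fibers, and every singular fiber contains at least one reduced component.

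Next, I would verify that a general fiber of $a_1$ is simply connected. Since $\rho$ is an isomorphism over a Zariski neighborhood of a general Albanese fiber (these avoid the finitely many singular points of $X$), a general fiber $F_1$ of $a_1$ is isomorphic to a general Albanese fiber $F$ of $X$. By Theorem \ref{thm: Noether-Severi}, $F$ is a smooth minimal surface with $K_F^2 = 1$ and $p_g(F) = 2$; combining $q(X) = g(B) = 1$ with the Leray spectral sequence for $a$ forces $q(F) = 0$. Smooth minimal surfaces with $K^2 = 1$, $p_g = 2$, $q = 0$ are simply connected, their canonical models being hypersurfaces of degree $10$ in the weighted projective space $\PP(1,1,2,5)$ (cf. Bombieri \cite{Bombieri}). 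Hence $\pi_1(F_1) = 1$.

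Finally, let $B^\circ \subset B$ be the open locus over which $a_1$ is smooth and set $X_1^\circ := a_1^{-1}(B^\circ)$. The homotopy long exact sequence for the smooth fibration $a_1|_{X_1^\circ}$ together with $\pi_1(F_1) = 1$ yields $\pi_1(X_1^\circ) \cong \pi_1(B^\circ)$. A Zariski--van Kampen argument presents $\pi_1(X_1)$ and $\pi_1(B)$ as the quotients of $\pi_1(X_1^\circ)$ and $\pi_1(B^\circ)$ by the normal subgroups generated by meridians around the irreducible divisorial components of the respective complements. The absence of multiple fibers (ensured by the section $\Gamma$) forces each meridian around a component of a singular fiber of $a_1$ to map to a meridian in $B$ around the corresponding point, making the two quotient maps compatible and giving $\pi_1(X_1) \cong \pi_1(B) \cong \ZZ^2$, as desired. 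The main subtlety lies in this last van Kampen step with reducible singular fibers of $a_1$, and the existence of $\Gamma$ — which forces at least one component of every fiber to be reduced and meet the section transversally — is exactly the input needed to keep the meridian correspondence tight.
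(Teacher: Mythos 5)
Your proposal is correct and follows essentially the same route as the paper: the paper likewise invokes Xiao's fibration argument, uses the section $\Gamma$ (via factoriality of $X$) to rule out multiple fibers, quotes Horikawa's theorem that a smooth $(1,2)$-surface is simply connected to kill the image of $\pi_1(F_1)$, and then identifies the resulting quotient of $\pi_1(B_0)$ with $\pi_1(B)$ exactly as in your van Kampen step. The only cosmetic difference is that the paper packages the homotopy-exact-sequence and meridian bookkeeping by citing Xiao's Lemmas 1 and 2 directly rather than rederiving them.
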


\begin{proof}
By the assumption, $B = \mathrm{Alb}(X_1)$. Let 
$$
a_1: X_1 \to B
$$
be the Albanese fibration of $X_1$. Then a general fiber $F_1$ of $a_1$ is a smooth $(1, 2)$-surface. By Theorem \ref{thm: Cartier}, $X$ is factorial. Note that the Albanese fibration $a$ has a section $\Gamma$. We deduce that $a$ does not have multiple fibers. Neither does $a_1$. 

Let $\CV$ be the image of $\pi_1(F_1)$ into $\pi_1(X_1)$. By a similar argument as in the proof of \cite[Lemma 1]{Xiao}, we deduce that $\CV$ is a normal subgroup of $\pi_1(X_1)$. Denote $\CH = \pi_1(X_1)/\CV$. Then we have the following exact sequence
$$
1 \to \CV \to \pi_1(X_1) \to \CH \to 1
$$
as in \cite[\S1]{Xiao}. By \cite[Theorem 4.8]{Horikawa2}, $F_1$ is simply connected.  Thus $\CV = 1$. Let $b_1$, $\ldots$, $b_n$ be all points on $B$ lying under singular fibers of $a_1$, and let $B_0 = B \backslash \{b_1, \ldots, b_n\}$. Using exactly the same proof as \cite[Lemma 2]{Xiao} for the fibration $a_1$ together with the fact that $a_1$ has no multiple fibers, we deduce that $\CH$ is the quotient of $\pi_1(B_0)$ by the normal subgroup generated by the homotopy classes of $\gamma_1, \ldots, \gamma_n$ in $\pi_1(B_0)$, where each $\gamma_i$ is a small loop in $B$ around $b_i$. Note that this quotient is exactly $\pi_1(B)$. Thus $\pi_1(X_1) \simeq \pi_1(B) \simeq \ZZ^2$, and the proof is completed.
\end{proof}


\section{$3$-folds on the Noether-Severi line: fine classification} \label{section: classification}

In the last section, we give an explicit description of the canonical model of irregular $3$-folds on the Noether-Severi line.

Let $X$ be a minimal and irregular $3$-fold of general type with $K_X^3 = \frac{4}{3} \chi(\omega_{X})$. Let $a: X \to B$ be the Albanese fibration of $X$. By Theorem \ref{thm: Noether-Severi}, $B$ is a smooth curve of genus $g(B) = q(X) = 1$. Let $T$ be a sufficiently ample divisor on $B$ as in Remark \ref{rmk: T}. Denote $t = \deg T$.

Let 
$$
\epsilon: X \to X_0 := X_{\rc}
$$
be the contraction from $X$ onto its canonical model $X_{\rc}$. Let $\Gamma_0 = \epsilon(\Gamma)$, where $\Gamma$ is the section of $a$ as is described in \S \ref{subsection: setting (1,2)-surface}. Thus $\Gamma_0$ is a section of the Albanese fibration $a_0: X_0 \to B$ of $X_0$. In particular, $\Gamma_0$ is smooth.

\begin{prop} \label{prop: |K_Xc|}
We have
\begin{itemize}
	\item [(1)] $\baselocus|K_{X_0}+a_0^*T| = \Gamma_0$, and $\Gamma_0$ lies in the smooth locus of $X_{0}$.
	
	\item [(2)] Let $\pi_0: X_1 \to X_0$ be the blow-up along $\Gamma_0$ with $E_0$ the exceptional divisor. Then $E_0$ is a $\PP^1$-bundle over $\Gamma_0$, and $|M_0|:=|\pi_0^*(K_{X_0}+a_0^*T) - E_0|$ is base point free.
\end{itemize}
\end{prop}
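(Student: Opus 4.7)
The plan is to transfer Theorem~\ref{thm: |K_X|} from the minimal model $X$ to the canonical model $X_0$ via the crepant contraction $\epsilon: X \to X_0$. Since $X$ is Gorenstein by Theorem~\ref{thm: Cartier} and $K_X$ is semi-ample on $X$, the canonical model $X_0 = \mathrm{Proj}\bigoplus_{n \ge 0} H^0(X, nK_X)$ is Gorenstein with canonical singularities and $K_X = \epsilon^*K_{X_0}$. Rational singularities give $R\epsilon_*\CO_X = \CO_{X_0}$, so pullback by $\epsilon$ induces a natural isomorphism $\epsilon^*: |K_{X_0} + a_0^*T| \stackrel{\sim}{\longrightarrow} |K_X + a^*T|$ of linear systems.

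The main step is to show that $\epsilon$ is an isomorphism over an open neighborhood of $\Gamma_0$. Any irreducible curve $C'$ contracted by $\epsilon$ satisfies $(K_X \cdot C') = 0$, and since $a = a_0 \circ \epsilon$ sends $C'$ to a point of $B$, such $C'$ is contained in a fiber of $a$. Hence $(a^*T \cdot C') = 0$ and
$$
(S_X \cdot C') = \bigl((K_X + a^*T) \cdot C'\bigr) = 0
$$
for every $S_X \in |K_X + a^*T|$. If $C' \cap \Gamma \neq \emptyset$, then because $\Gamma \subseteq S_X$ by Theorem~\ref{thm: |K_X|}~(1), we would get $C' \cap S_X \neq \emptyset$; combined with $(C' \cdot S_X) = 0$, this forces $C' \subseteq S_X$ for every such $S_X$, whence $C' \subseteq \baselocus|K_X + a^*T| = \Gamma$ and so $C' = \Gamma$ by irreducibility. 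But then $\Gamma$ would be contracted by $\epsilon$, contradicting $a(\Gamma) = B$. An analogous argument applied to a general curve fiber of a contracted divisor over its image rules out contracted divisors meeting $\Gamma$. Therefore $\mathrm{Exc}(\epsilon) \cap \Gamma = \emptyset$, and $\epsilon$ is an isomorphism over some open neighborhood of $\Gamma_0 = \epsilon(\Gamma)$.

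From this, part (1) follows immediately: under the identification of linear systems, $\baselocus|K_{X_0} + a_0^*T| = \epsilon(\Gamma) = \Gamma_0$, and $\Gamma_0 \cong \Gamma$ lies in the smooth locus of $X_0$ because $\Gamma$ lies in the smooth locus of $X$ by Theorem~\ref{thm: |K_X|}~(1). For part (2), since $\Gamma_0$ is a smooth curve inside the smooth locus of $X_0$, the map $\pi_0: X_1 \to X_0$ is the standard smooth blow-up, so $E_0 = \PP(\mathcal{N}_{\Gamma_0/X_0})$ is a $\PP^1$-bundle over $\Gamma_0$. The isomorphism between suitable neighborhoods of $\Gamma$ and $\Gamma_0$ lifts canonically to an isomorphism between neighborhoods of $E_\Gamma \subset Y$ and $E_0 \subset X_1$, identifying $\pi_\Gamma^*(K_X + a^*T) - E_\Gamma$ with $\pi_0^*(K_{X_0} + a_0^*T) - E_0$; this transfers the base-point freeness of $|M_Y|$ from Theorem~\ref{thm: |K_X|}~(3) to that of $|M_0|$ in a neighborhood of $E_0$, while on $X_1 \setminus E_0 \cong X_0 \setminus \Gamma_0$ base-point freeness is immediate from (1). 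The main obstacle is the verification that $\epsilon$ is an isomorphism near $\Gamma$, which relies essentially on the characterization $\baselocus|K_X + a^*T| = \Gamma$ from Theorem~\ref{thm: |K_X|}.
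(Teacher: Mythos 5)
Your proposal is correct and follows essentially the same route as the paper: identify the two linear systems via $\epsilon^*$, show that $\epsilon$ is an isomorphism over a neighbourhood of $\Gamma_0$, and then transfer Theorem \ref{thm: |K_X|} wholesale to $X_0$ and its blow-up. The only difference is in how the local isomorphism is obtained: you argue via intersection numbers of contracted curves with members of $|K_X+a^*T|$, whereas the paper observes more directly that $\epsilon^{-1}(\Gamma_0)\subseteq \epsilon^{-1}(\baselocus|K_{X_0}+a_0^*T|)=\baselocus|K_X+a^*T|=\Gamma$, so the fibers of $\epsilon$ over $\Gamma_0$ are single points and Zariski's main theorem applies; both verifications are valid.
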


\begin{proof}
For (1), note that $K_X+a^*T = \epsilon^*(K_{X_0}+a_0^*T)$. By Theorem \ref{thm: |K_X|} (1), $\epsilon^{-1}(\baselocus|K_{X_0}+a_0^*T|) = \baselocus|K_X+a^*T| = \Gamma$. Thus $\baselocus|K_{X_0}+a_0^*T| = \Gamma_0$. Since $\epsilon|_\Gamma: \Gamma \to \Gamma_0$ is an isomorphism and $\epsilon^{-1}(\Gamma_0)=\Gamma$, by Zariski's main theorem, $\epsilon$ is an isomorphism over a neighbourhood of $\Gamma_0$. By Theorem \ref{thm: |K_X|} (1) again, we deduce that $\Gamma_0$ lies in the smooth locus of $X_0$. For (2), note that $\Gamma_0$ is smooth. Thus $E_0$ is a $\PP^1$-bundle over $\Gamma_0$. The rest part of (2) just follows from Theorem \ref{thm: |K_X|} (3).
\end{proof}

\begin{defi} \label{def: section}
We call $\Gamma_0$ the \emph{base-locus section} of $a_0$.
\end{defi}

By Proposition \ref{prop: |K_Xc|}, we have the following commutative diagram
$$
\xymatrix{
X_1 \ar[d]_{\pi_0} \ar[rr]^{\psi} & & \Sigma \ar[d]^{p}\\
X_0 \ar@{-->}[urr]^{\phi} \ar[rr]_{a_0}  & & B       
}
$$
where $\phi := \phi_{K_{X_0} + a_0^*T}$ is the relative canonical map of $X_0$ with respect to $a_0$, $\Sigma = \PP_B((a_0)_* \omega_{X_0})$ is a $\PP^1$-bundle over $B$ with the natural projection $p$, and $\psi$ is the morphism induced by $|M_0|$ which, by Lemma \ref{lem: deg Sigma}, has connected fibers. 

Denote by $C$ a general fiber of $\psi$. By Lemma \ref{lem: E0} (3),  we have $((\pi_0^*K_{X_0}) \cdot C) = 1$. By Lemma \ref{lem: g2}, $(K_{X_1} \cdot C) = 2g(C)-2 = 2$. Thus we deduce that 
$$
(E_0 \cdot C) = \left( (K_{X_1} - \pi_0^*K_{X_0}) \cdot C \right) = 1.
$$
In particular, $\psi|_{E_0}: E_0 \to \Sigma$ is birational. By Proposition \ref{prop: |K_Xc|} (2), $E_0$ itself is a $\PP^1$-bundle. We deduce that $\psi|_{E_0}$ is an isomorphism.

\begin{lemma} \label{lem: flatness}
The morphism $\psi$ is flat, and every fiber of $\psi$ is integral.
\end{lemma}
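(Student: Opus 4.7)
My plan is to establish the flatness of $\psi$ via Hironaka's miracle flatness criterion, and then deduce the integrality of each fiber from the intersection numbers preserved by flatness. For miracle flatness, I need $X_1$ Cohen--Macaulay and $\Sigma$ smooth. The second holds since $\Sigma$ is a $\PP^1$-bundle over $B$. For the first, Theorem \ref{thm: Cartier} tells us that $X$ is Gorenstein, from which the canonical model $X_0$ is also Gorenstein (the canonical model of a Gorenstein terminal $3$-fold has Gorenstein canonical singularities). Since $\pi_0$ blows up the smooth curve $\Gamma_0$ lying in the smooth locus of $X_0$ (Proposition \ref{prop: |K_Xc|}(1)), $X_1$ is Cohen--Macaulay. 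It therefore suffices to show that every fiber of $\psi$ is $1$-dimensional.

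To this end, I would argue by contradiction: suppose a prime divisor $D$ on $X_1$ is contracted by $\psi$ to a point $s \in \Sigma$, and set $b_0 = p(s)$. Since $E_0$ is the only $\pi_0$-exceptional divisor and $\psi|_{E_0}$ is an isomorphism onto $\Sigma$, we have $D \ne E_0$, so $D_0 := \pi_0(D)$ is a prime divisor on $X_0$ and is an irreducible component of the fiber $a_0^{-1}(b_0)$. Since $\Gamma_0$ is a section of $a_0$, $\Gamma_0 \cap a_0^{-1}(b_0)$ is a single point $\Gamma_0(b_0)$, so two cases arise. In the case when $\Gamma_0(b_0) \in D_0$, the strict transform $D$ contains the entire $\pi_0|_{E_0}$-fiber $e_{b_0}$ over $\Gamma_0(b_0)$; but $\psi|_{e_{b_0}}$ is an isomorphism onto $p^{-1}(b_0) \cong \PP^1$, contradicting $\psi(D) = \{s\}$. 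In the case when $\Gamma_0(b_0) \notin D_0$, $D$ is disjoint from $E_0$. Writing $M_0 = \psi^*L$ for $L$ the hyperplane class of $\Sigma \subset \PP^{d-1}$ (so $M_0 \cdot D = 0$ as $\psi(D)$ is a point), the identity $\pi_0^*(K_{X_0}+a_0^*T) = M_0 + E_0$ gives $\pi_0^*(K_{X_0}+a_0^*T) \cdot D = 0$ as a $1$-cycle on $X_1$. Pushing forward via $\pi_0$ and pairing with any ample divisor $H_0$ on $X_0$ yields $(K_{X_0} \cdot H_0 \cdot D_0) + (a_0^*T \cdot H_0 \cdot D_0) = 0$; since $D_0$ lies in a single fiber of $a_0$, the second summand vanishes, forcing $(K_{X_0} \cdot H_0 \cdot D_0) = 0$, which is impossible as $K_{X_0}$ is ample. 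Hence no such $D$ exists, and miracle flatness proves $\psi$ is flat.

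For integrality, flatness preserves the intersection numbers $(K_{X_1} \cdot C_s) = 2$ and $(E_0 \cdot C_s) = 1$ on every scheme-theoretic fiber $C_s = \psi^{-1}(s)$. Decompose $C_s = \sum m_i C_s^{(i)}$ into irreducible components with multiplicities. No $C_s^{(i)}$ lies in $E_0$, since otherwise $\psi(C_s^{(i)})$ would be a curve in $\Sigma$, not a point. Hence $\pi_0|_{C_s^{(i)}}$ is birational onto an integral curve $\pi_0(C_s^{(i)})$ of $X_0$, giving $(\pi_0^*K_{X_0} \cdot C_s^{(i)}) = (K_{X_0} \cdot \pi_0(C_s^{(i)})) \ge 1$ by the ampleness of $K_{X_0}$. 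The identity $\sum m_i (E_0 \cdot C_s^{(i)}) = 1$ then pins down a unique component (say $C_s^{(1)}$) with $m_1 = 1$ and $(E_0 \cdot C_s^{(1)}) = 1$, with $(E_0 \cdot C_s^{(i)}) = 0$ for all $i \ge 2$. Consequently $(K_{X_1} \cdot C_s^{(1)}) \ge 2$ and $(K_{X_1} \cdot C_s^{(i)}) \ge 1$ for $i \ge 2$, so the identity $\sum m_i (K_{X_1} \cdot C_s^{(i)}) = 2$ forces $C_s = C_s^{(1)}$ as a cycle, i.e.\ $C_s$ is integral.

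The main obstacle is ruling out $2$-dimensional fiber components, specifically Case B above where $D$ and $E_0$ are disjoint: the argument there is essentially driven by the ampleness of $K_{X_0}$, which is precisely why this lemma is naturally phrased on the canonical model $X_0$ rather than on the minimal model $X$.
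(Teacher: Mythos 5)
Your proof is correct and follows essentially the same strategy as the paper: rule out two-dimensional fiber components using the ampleness of $K_{X_0}$, apply miracle flatness, and then force integrality from the constancy of the fiber degrees against $\pi_0^*K_{X_0}$ (equivalently $K_{X_1}$ and $E_0$). The only soft spot is your Case A, where the assertion that the strict transform $D$ contains the whole exceptional fiber $e_{b_0}$ is left unjustified (it does hold here, since $(\Gamma_0\cdot a_0^{-1}(b_0))=1$ forces the fiber of $a_0$ to be smooth at $\Gamma_0(b_0)$ with a single reduced component transverse to $\Gamma_0$); the paper sidesteps this case entirely by observing that a two-dimensional component $W_0$ of a fiber of $\psi$ meets the Cartier divisor $E_0$ either in a curve or not at all, while $E_0$ being a section of $\psi$ caps $W_0\cap E_0$ at a single point, so $W_0\cap E_0=\emptyset$ always.
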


\begin{proof}
Let $W$ be any fiber of $\psi$. Since $\psi|_{E_0}$ is an isomorphism, $E_0$ is a section of $\psi$. Thus $W \cap E_0$ is a point. Suppose $W$ contains an irreducible component $W_0$ of dimension two. Then $W_0 \cap E_0 = \emptyset$. Thus
$$
\left((K_{X_0}+a_0^*T)^2 \cdot \left( (\pi_0)_* W_0 \right)\right) = \left((M_0 + E_0)^2 \cdot W_0\right) = (M_0^2 \cdot W_0) =0.
$$
Since $K_{X_0}$ is ample, the above equality implies that $\dim (\pi_0)_* W_0 \le 1$. This is impossible, because the only $\pi_0$-exceptional divisor is $E_0$. As a result, $\dim W = 1$. By \cite[Theorem 23.1]{Matsumura}, $\psi$ is flat. 

Note that $K_{X_0}$ is Cartier and $((\pi_0^*K_{X_0})\cdot W)= 1$.  If $W$ is reducible, then there is an irreducible component $W_1$ of $W$ such that $((\pi_0^*K_{X_0})\cdot W_1) = 0$, i.e., $(K_{X_0} \cdot ((\pi_0)_*W_1)) = 0$. Thus $(\pi_0)_*W_1$ is a point, so $W_1 \subset E_0$. This is a contradiction. Therefore, $W$ is irreducible.	Since $K_{X_0}$ is Cartier, $((\pi_0^*K_{X_0})\cdot W)= 1$ and $W$ is irreducible, we deduce that $W$ is reduced.
\end{proof}

\begin{lemma}\label{lem: rank 2 bundle}
Let $\CE= \psi_* \CO_{X_1}(2E_0)$. Then $\CE$ is locally free of rank two.
\end{lemma}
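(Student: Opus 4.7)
The plan is to apply Grauert's theorem: it will suffice to prove that $h^0(W, \CO_W(2E_0|_W)) = 2$ on every scheme-theoretic fiber $W = \psi^{-1}(s)$, $s \in \Sigma$. By Lemma~\ref{lem: flatness}, $\psi$ is flat with integral fibers, and by Lemma~\ref{lem: g2} its general fiber $C$ is a smooth curve of genus two; hence every fiber $W$ is an integral projective curve of arithmetic genus $p_a(W) = 2$.

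The key step will be to identify $\CO_W(2E_0|_W)$ with the dualizing sheaf $\omega_W$. Combining the blow-up formula $K_{X_1} = \pi_0^*K_{X_0} + E_0$ (valid because $\Gamma_0$ sits in the smooth locus of $X_0$ by Proposition~\ref{prop: |K_Xc|}) with $\pi_0^*(K_{X_0}+a_0^*T) = M_0 + E_0$ yields $2E_0 \sim \omega_{X_1} - M_0 + \pi_0^*a_0^*T$. Since the base point free linear system $|M_0|$ defines $\psi$ with connected fibers onto the $\PP^1$-bundle $\Sigma$, one has $M_0 = \psi^*\CO_\Sigma(1)$; and commutativity of the Albanese square gives $\pi_0^*a_0^*T = \psi^*p^*T$. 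Therefore
\[
\CO_{X_1}(2E_0) \cong \omega_{X_1} \otimes \psi^*\bigl(p^*\CO_B(T) \otimes \CO_\Sigma(-1)\bigr).
\]
Since $X_0$ is Gorenstein (via Theorem~\ref{thm: Cartier} and the crepancy of the canonical contraction $X \to X_0$), so is $X_1$, while $\Sigma$ is smooth and $\psi$ is flat. Consequently, the relative dualizing sheaf $\omega_{X_1/\Sigma} = \omega_{X_1} \otimes \psi^*\omega_\Sigma^{-1}$ is an invertible sheaf that restricts to $\omega_W$ on each fiber $W$. Rewriting the displayed isomorphism as $\CO_{X_1}(2E_0) \cong \omega_{X_1/\Sigma} \otimes \psi^*\CN$ for a suitable line bundle $\CN$ on $\Sigma$ and restricting to $W$ kills the $\psi^*\CN$ factor, producing $\CO_W(2E_0|_W) \cong \omega_W$.

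Finally, Serre duality on the integral projective curve $W$ delivers
\[
h^0(W, 2E_0|_W) = h^0(W, \omega_W) = h^1(W, \CO_W) = p_a(W) = 2,
\]
which is constant in $s$. Grauert's theorem then forces $\CE = \psi_*\CO_{X_1}(2E_0)$ to be locally free of rank two. I expect the main obstacle to be justifying $\omega_{X_1/\Sigma}|_W \cong \omega_W$, equivalently that $\psi$ is a Gorenstein morphism, which ultimately rests on confirming Gorenstein-ness of the canonical model $X_0$ inherited from Theorem~\ref{thm: Cartier}.
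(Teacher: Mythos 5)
Your proof is correct and follows essentially the same route as the paper: both identify $\CO_W(2E_0|_W)$ with the dualizing sheaf of the integral arithmetic-genus-two fiber via $K_{X_1} = M_0 - \pi_0^*(a_0^*T) + 2E_0$ (with the $\psi$-pullback terms dying on fibers), apply Serre duality to get the constant value $h^0 = 2$, and conclude by Grauert. Your explicit use of the relative dualizing sheaf $\omega_{X_1/\Sigma}$ to justify $\omega_{X_1}|_W \cong \omega_W$ is just a slightly more formal packaging of the paper's direct adjunction statement.
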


\begin{proof}
Take any fiber $C$ of $\psi$. Since a general fiber of $\psi$ is of genus $2$, by Lemma \ref{lem: flatness}, it follows that $C$ is an integral curve of arithmetic genus $2$. We deduce that $h^1(C,\CO_C)=2$. By Theorem \ref{thm: Cartier} and Proposition \ref{prop: |K_Xc|}, $X_1$ is Cohen-Macaulay and the dualizing sheaf $\omega_{X_1}$ is invertible. Since $C$ is a fiber of $\psi$ and $\Sigma$ is smooth, $C$ is Cohen-Macaulay and the dualizing sheaf $\omega_C=\omega_{X_1}|_C$ is invertible. We deduce that 
$$
h^0(C, K_C)=h^0(C, \omega_C)=h^1(C, \mathcal{O}_C)=2,
$$
where the last equality follows from the Serre duality. On the other hand, note that $K_{X_1} = \pi_0^*K_{X_0} + E_0 = M_0 - \pi_0^*(a_0^*T) + 2E_0$. Thus $h^0(C, 2E_0|_C) = h^0(C, K_{X_1}|_C) =h^0(C, K_C)= 2$. By Grauert's theorem \cite[III, Corollary 12.9]{Hartshorne}, the result follows.
\end{proof}

Let $q: Y:= \PP_{\Sigma}(\CE) \to \Sigma$ denote the $\PP^1$-bundle over $\Sigma$. By Lemma \ref{lem: flatness}, every fiber $C$ of $\psi$ is integral. By \cite[Theorem 3.3]{Catanese_Franciosi_Hulek_Reid}, we deduce that $|K_C| = |2E_0|_C|$ is base point free. Thus we obtain a morphism
$$
f: X_1 \to Y,
$$
which is just the relative canonical map of $X_1$ with respect to $\psi$. Moreover, $f$ is a finite morphism of degree two. By \cite[Theorem 23.1]{Matsumura}, $f$ is flat. Let $E_Y  = f(E_0)$. It is easy to see that $E_Y$ is a section of $q$. Let $j: \Sigma \to Y$ denote this section. Thus we have the following commutative diagram
$$
\xymatrix{
X_1 \ar[drr]_{\psi}  \ar[rr]^{f} & &  Y \ar[d]_q & & \\
& & \Sigma \ar[rr]^p \ar@/_1 pc/[u]_j & & B
}
$$

For any fiber $C$ of $\psi$, $f|_C: C\to f(C)\cong\PP^1$ is just the canonical map of $C$. Note that $2E_0|_C\in |K_C|$. Thus we have $(f^*E_Y)|_C=2E_0|_C$. We conclude that $f^*E_Y=2E_0$. Since $f^*E_Y = 2E_0$, we have $\CE = q_* \CO_Y(E_Y)$. Consider the short exact sequence
$$
0 \to \CO_Y \to \CO_Y(E_Y) \to \CO_{E_Y}(E_Y) \to 0.
$$
Note that $R^1 q_* \CO_Y = 0$ and $q_*\CO_{E_Y}(E_Y) = \CO_\Sigma (j^*E_Y)$. Pushing forward by $q$ on the above short exact sequence, we obtain
\begin{equation} \label{eq: ses}
0 \to \CO_{\Sigma} \to \CE \to \CO_\Sigma (j^*E_Y) \to 0.
\end{equation}

In the following, we will compute $j^*E_Y$. Now $a_1:=p \circ \psi: X_1 \to B$ is the Albanese fibration of $X_1$. Then 
$$
p_0: = a_1|_{E_0}: E_0 \to B
$$ 
induces the $\PP^1$-bundle structure on $E_0$. Denote $(a_0)_* \omega_{X_0}$ by $\CE_B$. By the isomorphism $\psi|_{E_0}$, we may identify $E_0$ with $\PP_B(\CE_B)$. Denote by $F_1$ a general fiber of $a_1$. Then $F_1|_{E_0}$ is just a fiber of $p_0$. Since $M_0 \cdot F_1 \equiv C$, we have $(M_0|_{E_0} \cdot F_1|_{E_0}) = (M_0 \cdot F_1 \cdot E_0) = (E_0 \cdot C) = 1$. Thus we may write
\begin{equation} \label{eq: ME}
M_0|_{E_0} = s + p_0^*A_1,
\end{equation}
where $s$ is a relative hyperplane section of $p_0$, i.e., $\CO_{E_0}(s) = \CO_{E_0}(1)$. Note that $E_0$ is horizontal with respect to $\psi$ but $M_0$ is vertical. We deduce that $h^0(X_1, M_0 - E_0) = 0$. Therefore, the map $\psi|_{E_0}: E_0 \to \Sigma$ is just induced by $|M_0|_{E_0}|$. By Lemma \ref{lem: deg Sigma}, we have
\begin{equation} \label{eq: A and e}
\deg \CE_B + 2 \deg A_1 = (s + p_0^*A_1)^2 = \deg \Sigma = \chi(\omega_X) + 2t.
\end{equation}

Recall that now the inequality in Proposition \ref{prop: Noether (1,2)-surface} (1) becomes an equality. Thus we deduce from Lemma \ref{lem: deg Sigma} that $(K_{X_0} \cdot \Gamma_0) = (K_X \cdot \Gamma) = \frac{1}{3} \chi(\omega_X)$. Let $\gamma_0: B \to X_0$ represent the base-locus section $\Gamma_0$. Then we have
\begin{equation} \label{eq: KE}
(\pi_0^*K_{X_0})|_{E_0} = p_0^*A_0,
\end{equation}
where $A_0 = \gamma_0^* K_{X_0} $ with $\deg A_0 = \frac{1}{3} \chi(\omega_X)$. Set $A_2 = A_0 + T - A_1$. Then we have 
\begin{align}\label{eq: A2}
\deg A_2 =  \frac{1}{3} \chi(\omega_X) + t - \deg A_1
\end{align}
and
\begin{equation} \label{eq: EE}
E_0|_{E_0} = (\pi_0^*K_{X_0} + a_1^*T - M_0)|_{E_0}= - s + p_0^*A_2.
\end{equation}

\begin{remark} \label{rmk: degree 0}
In fact, from  \eqref{eq: A and e} and \eqref{eq: A2}, we deduce that
$$
\deg A_1 + 3 \deg A_2 - \deg \CE_B - t = 0.
$$
\end{remark}

From now on, we identify $E_0$ and $p_0$ with $\Sigma$ and $p$ under the isomorphism $\psi|_{E_0}$. Under this identification and by \eqref{eq: EE}, we have
\begin{equation} \label{eq: E_Y}
j^*E_Y = (\psi|_{E_0})_* \left((f|_{E_0})^*E_Y\right) = (\psi|_{E_0})_* \left(2E_0|_{E_0}\right) = -2s + 2p^*A_2.
\end{equation} 
Thus it follows from \eqref{eq: ses} that
\begin{align} 
K_Y & = -2E_Y + q^*(K_\Sigma - 2s + 2p^*A_2) \nonumber \\
& = -2E_Y + q^*(-4s + p^* \det \CE_B + 2p^* A_2). \label{eq: K_Y}
\end{align}

Let $D$ be the branch locus of $f: X_1 \to Y$. Since $X_1$ has at worst canonical singularities, $D$ must be reduced. Since $E_Y$ is contained in $D$, we may write
$$
D = E_Y + D',
$$ 
where $D'$ does not contain $E_Y$ as an irreducible component. Furthermore, there exists a divisor $L$ on $Y$ such that $D \sim 2L$ and $K_{X_1} = f^*(K_Y + L)$.

\begin{lemma} \label{lem: branch locus}
We have 
\begin{itemize}
	\item [(1)] $L \sim 3E_Y + q^*(5s + p^*(A_1 - 2A_2 - \det \CE_B - T))$;
	\item [(2)] $D' \sim 5E_Y + 2q^*(5s + p^*(A_1 - 2A_2 - \det \CE_B - T))$.
\end{itemize}
\end{lemma}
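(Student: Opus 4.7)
The plan is to invoke the adjunction formula $K_{X_1} = f^*(K_Y + L)$, so that $f^*L = K_{X_1} - f^*K_Y$, compute both sides explicitly in terms of $E_0, \psi^*s$ and $a_1^*(\cdot)$, and then descend via injectivity of $f^*$ on Picard groups to recover $L$ itself.

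For $K_{X_1}$, combining $K_{X_1} = \pi_0^*K_{X_0} + E_0$ with $\pi_0^*(K_{X_0}+a_0^*T) = M_0 + E_0$ from Proposition \ref{prop: |K_Xc|} (2) and using $a_1 = a_0\circ\pi_0$ yields $K_{X_1} = M_0 + 2E_0 - a_1^*T$. Next I would identify $M_0$ as a pullback under $\psi$: since $\psi$ is the morphism attached to $|M_0|$ and $\tau$ is an isomorphism by Lemma \ref{lem: deg Sigma}, we have $M_0 = \psi^*H$ for some class $H$ on $\Sigma$; restricting to $E_0$ and using the isomorphism $\psi|_{E_0}\colon E_0\to\Sigma$ together with \eqref{eq: ME}, one reads off $H = M_0|_{E_0} = s + p^*A_1$, hence $M_0 \sim \psi^*(s + p^*A_1)$. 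For $f^*K_Y$, substitute $f^*E_Y = 2E_0$ and $q\circ f = \psi$ into \eqref{eq: K_Y} to get $f^*K_Y = -4E_0 - 4\psi^*s + a_1^*(\det\CE_B + 2A_2)$. Subtracting and simplifying gives
\[
f^*L \sim 5\psi^*s + 6E_0 + a_1^*(A_1 - T - \det\CE_B - 2A_2),
\]
which coincides with $f^*\bigl(3E_Y + q^*(5s + p^*(A_1 - 2A_2 - \det\CE_B - T))\bigr)$.

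To conclude (1), I would show that $f^*\colon\Pic(Y)\to\Pic(X_1)$ is injective. If $f^*M\cong\CO_{X_1}$ for some $M\in\Pic(Y)$, then the projection formula combined with $f_*\CO_{X_1} = \CO_Y\oplus\CO_Y(-L)$ yields $M\oplus M(-L)\cong\CO_Y\oplus\CO_Y(-L)$. Since $Y$ is a smooth projective variety, Krull--Schmidt for line bundles forces either $M\cong\CO_Y$ or $M\cong\CO_Y(-L)$ with $2L\sim 0$; the latter is impossible because $2L\sim D = E_Y+D'$ is a nonzero effective divisor on $Y$. Hence $f^*$ is injective and (1) follows. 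Finally, (2) is immediate from $D' = D - E_Y \sim 2L - E_Y$.

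The main obstacle, in my view, is pinning down the identification $M_0 \sim \psi^*(s+p^*A_1)$: it requires matching the two a priori distinct tautological classes $s$ on $E_0\cong\PP_B(\CE_B)$ and on $\Sigma\cong\PP_B(\CE_B)$ via the isomorphism $\psi|_{E_0}$, and relies crucially on the fact that $\tau$ is an isomorphism so that $M_0$ is genuinely a pullback from $\Sigma$. Once this matching is granted, the remainder of the proof is a routine Picard-group bookkeeping coupled with the standard Krull--Schmidt descent for double covers.
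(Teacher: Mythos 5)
Your proof is correct and takes essentially the same route as the paper: both compute $f^*L = K_{X_1} - f^*K_Y$ explicitly via \eqref{eq: ME} and \eqref{eq: K_Y} (your identification $M_0 \sim \psi^*(s+p^*A_1)$ is exactly the content of \eqref{eq: ME} under the identification $E_0\cong\Sigma$ already fixed in the paper), and then descend by applying the projection formula to $f_*\CO_{X_1} = \CO_Y\oplus\CO_Y(-L)$. The Krull--Schmidt case analysis you spell out, ruling out $2L\sim 0$ because $2L\sim D$ is a nonzero effective divisor, is precisely the implicit content of the paper's concluding step ``Thus $L\sim L'$.''
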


\begin{proof}
Recall that by \eqref{eq: ME}, we have
$$
K_{X_1}  = M_0 - a_1^* T + 2E_0 = f^*\left( E_Y + q^*\left( s + p^*(A_1-T) \right) \right).
$$
Let $L' = 3E_Y + q^*(5s + p^*(A_1 - 2A_2 - \det \CE_B - T))$. By \eqref{eq: K_Y}, we have
$$
f^*L' = K_{X_1} - f^*K_Y = f^*L,
$$
i.e., $f^*\CO_Y(L-L') = \CO_{X_1}$. By the projection formula, we deduce that
$$
\CO_Y \oplus \CO_Y(-L) = f_* \CO_{X_1} = \CO_Y(L-L') \oplus \CO_Y(-L'). 
$$
Thus $L \sim L'$. Thus (1) is proved. Since $D' \sim 2L - E_Y$, (2) follows immediately from (1).
\end{proof}

\begin{lemma} \label{lem: D'}
Under the isomorphism $q|_{E_Y}: E_Y \to \Sigma$, we have 
$$
\CO_{E_Y}(D') = \CO_{E_Y} \left(2p^*(A_1 +  3A_2 - \det \CE_B - T) \right) = \CO_{E_Y}.
$$
\end{lemma}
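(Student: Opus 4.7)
The plan is to prove the two stated equalities separately. For the first equality, I would compute $\CO_{E_Y}(D')$ by restricting the linear equivalence in Lemma \ref{lem: branch locus}(2) to $E_Y$. Using the isomorphism $q|_{E_Y}\colon E_Y\xrightarrow{\sim}\Sigma$ to identify everything on $\Sigma$, the class $E_Y|_{E_Y}$ is replaced by $j^*E_Y=-2s+2p^*A_2$ via \eqref{eq: E_Y}, so the $s$-contributions from $5\cdot(-2s)$ and $2\cdot 5s$ cancel and the remaining terms collapse to $2p^*(A_1+3A_2-\det\CE_B-T)$.

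For the second equality $\CO_{E_Y}(2p^*(A_1+3A_2-\det\CE_B-T))=\CO_{E_Y}$, note that Remark \ref{rmk: degree 0} only gives that this line bundle has degree zero when viewed on $B$; since a degree-zero line bundle on the elliptic curve $B$ need not be trivial, one cannot conclude directly. Instead, I plan to deduce triviality from the stronger geometric assertion that $D'\cap E_Y=\emptyset$ set-theoretically, from which any local equation of $D'$ gives a nowhere-vanishing section of $\CO_Y(D')|_{E_Y}$.

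To prove $D'\cap E_Y=\emptyset$, I use that $Y$ is a smooth threefold (a tower of $\PP^1$-bundles), so any irreducible component of $D'$ meeting $E_Y$ must meet it along a curve $\Lambda$, since two distinct irreducible Cartier divisors in a smooth threefold cannot intersect only in isolated points. At a general point of $\Lambda$, the reducedness of the branch divisor $D=E_Y+D'$ forces both $E_Y$ and the relevant analytic branch of $D'$ to be smooth and transverse, so the flat double cover $f\colon X_1\to Y$ is \'etale-locally modelled on $z^2=xy$, producing an $A_1$-singularity of $X_1$ along the curve $f^{-1}(\Lambda)\subset E_0$. However, by Proposition \ref{prop: |K_Xc|}, $X_1$ is the blow-up of $X_0$ along the smooth curve $\Gamma_0$ contained in the smooth locus of $X_0$, hence is smooth in a neighbourhood of $E_0$---a contradiction. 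The only non-routine step I expect is verifying the local transversality and smoothness at a generic point of $\Lambda$, which ultimately follows from the reducedness of $D$.
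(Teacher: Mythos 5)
Your computation of the first equality is exactly the paper's: restrict Lemma \ref{lem: branch locus} (2) to $E_Y$, substitute $E_Y|_{E_Y}=j^*E_Y=-2s+2p^*A_2$ from \eqref{eq: E_Y}, and watch the $s$-terms cancel. For the second equality you take a genuinely different route. The paper's argument is purely cohomological and very short: since $D'$ does not contain $E_Y$ as a component, $D'|_{E_Y}$ is an \emph{effective} divisor in the class $2p^*N$ with $N=A_1+3A_2-\det\CE_B-T$; Remark \ref{rmk: degree 0} gives $\deg N=0$, and an effective divisor on $B$ of degree zero (after pushing down by $p$, using $p_*\CO_\Sigma=\CO_B$) is principal, so $2N\sim 0$. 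You correctly observed that degree zero alone is not enough on an elliptic curve, but missed that combining it with the automatic effectivity of $D'|_{E_Y}$ closes the gap in one line. Your alternative --- proving the stronger statement $D'\cap E_Y=\emptyset$ by noting that $f^{-1}(E_Y)_{\mathrm{red}}=E_0$, that $X_1$ is smooth near $E_0$ (blow-up of the smooth curve $\Gamma_0$ inside the smooth locus of $X_0$, by Proposition \ref{prop: |K_Xc|}), and that a flat double cover is singular over any point where its reduced branch divisor is singular --- is valid and in fact recovers geometric information ($D_1\cap D_2=\emptyset$) that the paper only extracts later, in Theorem \ref{thm: classification} (3).

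One inaccuracy in your justification: reducedness of $D=E_Y+D'$ does \emph{not} force the two components to be smooth and transverse along their intersection, so the local model $z^2=xy$ need not hold. But this step is unnecessary. If $P\in E_Y\cap D'$ and $h$ is a local reduced equation of $D$ at $P$, then $h$ is a product of two functions both vanishing at $P$, so $h\in\mathfrak m_P^2$; hence $z^2-h$ has multiplicity at least two at the unique point of $f^{-1}(P)$, and $X_1$ is singular there regardless of transversality. With that correction your argument is complete, and the dimension count on $\Lambda$ is also dispensable, since a single point of $E_Y\cap D'$ already yields the contradiction.
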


\begin{proof}
By \eqref{eq: E_Y} and Lemma \ref{lem: branch locus} (2), we have
\begin{align*}
	D'|_{E_Y} & \sim 5E_Y|_{E_Y} + 2 \left(5s + p^*(A_1 - 2A_2 - \det \CE_B - T)\right) \\ 
	& \sim 2p^*(A_1 + 3 A_2 - \det \CE_B - T).
\end{align*}
By Remark \ref{rmk: degree 0}, $\deg A_1 + 3 \deg A_2 - \deg \CE_B - t = 0$.
Since $h^0(E_Y, D'|_{E_Y}) > 0$, we deduce that $2A_1 + 6 A_2 - 2 \det \CE_B - 2T \sim 0$. Thus the result follows.
\end{proof}

\begin{lemma} \label{lem: split}
The short exact sequence \eqref{eq: ses} splits. In particular,
$$
\CE = \CO_\Sigma \oplus \CO_\Sigma (-2s + 2p^*A_2).
$$
\end{lemma}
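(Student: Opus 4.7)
The plan is to split the extension \eqref{eq: ses} by producing a second section of $q : Y \to \Sigma$ disjoint from $E_Y$; two disjoint sections of a $\PP^1$-bundle automatically force the underlying rank-$2$ bundle to decompose as a direct sum of line bundles coming from the two sections, which is exactly the asserted conclusion.

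First I would extract from Lemma \ref{lem: D'} the geometric fact that $D' \cap E_Y = \emptyset$. Indeed, the restriction $D'|_{E_Y}$ is an effective Cartier divisor on the projective surface $E_Y \cong \Sigma$ that is trivial as a line bundle, hence must equal zero. Combining this disjointness with the linear equivalence in Lemma \ref{lem: branch locus}(2) pins down the divisor class: setting $E'_Y := E_Y - q^*j^*E_Y \in \Pic(Y)$, one has $5 E'_Y \sim D'$ (so $|5 E'_Y| \neq \emptyset$) and $\CO_Y(E'_Y)|_{E_Y} = \CO_{E_Y}$, consistent with $E'_Y$ being a candidate for the sought-after second section.

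The heart of the argument is to upgrade the effectivity of $5 E'_Y$ to that of $E'_Y$ itself. For this I would analyse the short exact sequence
\[
0 \longrightarrow q^*\CO_\Sigma(-j^*E_Y) \longrightarrow \CO_Y(E'_Y) \longrightarrow \CO_{E_Y} \longrightarrow 0
\]
obtained by restricting $\CO_Y(E'_Y)$ to the section $E_Y$. Lifting the constant $1 \in H^0(E_Y,\CO_{E_Y})$ to a global section of $\CO_Y(E'_Y)$ is governed by a connecting map into $H^1(Y, q^*\CO_\Sigma(-j^*E_Y)) \cong H^1(\Sigma, \CO_\Sigma(2s - 2p^*A_2))$, and a standard unravelling identifies the image of $1$ with (a twist of) the extension class of \eqref{eq: ses} itself. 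Applying Leray to $p : \Sigma \to B$ together with Remark \ref{rmk: degree 0} and the numerical information already established, I expect this class to vanish, yielding an effective divisor $\widetilde{E}'_Y \in |E'_Y|$ of fiber-degree one over $\Sigma$, hence a section of $q$, automatically disjoint from $E_Y$ since $E'_Y|_{E_Y}$ is trivial.

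The main obstacle is precisely this cohomological lifting: converting ``$5E'_Y$ is effective'' into ``$E'_Y$ is effective''. Once this is achieved, the two disjoint sections $E_Y$ and $\widetilde{E}'_Y$ of the $\PP^1$-bundle $q$ induce two complementary line-bundle quotients of $\CE$, forcing the decomposition $\CE = \CO_\Sigma \oplus \CO_\Sigma(-2s + 2p^*A_2)$ asserted in the lemma.
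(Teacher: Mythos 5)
Your proposal correctly isolates the right geometric input: by Lemma \ref{lem: D'} the effective divisor $D'|_{E_Y}$ has trivial associated line bundle, hence $D' \cap E_Y = \emptyset$, and with $E'_Y := E_Y - q^*j^*E_Y$ one indeed has $D' \sim 5E'_Y$. It is also true that a second section of $q$ disjoint from $E_Y$ would force the splitting. But the step you yourself flag as ``the main obstacle'' --- upgrading the effectivity of $5E'_Y$ to the effectivity of $E'_Y$ --- is a genuine gap, and the route you sketch for it is circular. The obstruction to lifting $1 \in H^0(E_Y, \CO_{E_Y})$ to $H^0(Y, \CO_Y(E'_Y))$ lives in $H^1(\Sigma, \CO_\Sigma(2s-2p^*A_2)) = \mathrm{Ext}^1(\CO_\Sigma(j^*E_Y), \CO_\Sigma)$ and is precisely the extension class of \eqref{eq: ses} that the lemma asserts to vanish; there is no independent reason (Leray over $p$, or the numerics of Remark \ref{rmk: degree 0}) for it to vanish, since nothing so far distinguishes the split bundle from a non-split extension with the same numerical data. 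Saying ``I expect this class to vanish'' is exactly assuming the conclusion.

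The missing idea is how to exploit the fact that the effective divisor at hand has fiber degree $5$ rather than $1$. The paper does this by passing to $\Sym^5\CE$: pushing forward $0 \to \CO_Y(4E_Y) \to \CO_Y(5E_Y) \to \CO_{E_Y}(5E_Y) \to 0$ gives an extension $(**)$ of $\CO_\Sigma(5j^*E_Y)$ by $\Sym^4\CE$, and the section of $\CO_Y(5E_Y)\otimes q^*\CO_\Sigma(-5j^*E_Y)$ cut out by $D'$ restricts to a \emph{nonzero} (hence nowhere vanishing) section of $\CO_{E_Y}$ precisely because $E_Y \not\subseteq D'$; this surjectivity onto $H^0(\CO_\Sigma)$ splits $(**)$. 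The splitting is then transported back to \eqref{eq: ses} via the symmetrizing morphism $(\Sym^4\CE)\otimes\CE \to \Sym^5\CE$. If you want to salvage your approach, you should replace the attempted lifting of $1$ along $\CO_Y(E'_Y)$ by this degree-$5$ argument; as written, the proof does not go through.
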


\begin{proof}
Let $h: (\Sym^4 \CE) \otimes \CE \to \Sym^5 \CE$ be the symmetrizing morphism. Then we have the following commutative diagram:
$$
\xymatrix{
	0 \ar[r] & \Sym^4 \CE \ar[r] \ar@{=}[d] & (\Sym^4 \CE) \otimes \CE \ar[r] \ar[d]^h &  (\Sym^4 \CE) \otimes \CO_\Sigma (j^*E_Y) \ar[r] \ar[d] & 0 \quad (*)\\
	0 \ar[r] & \Sym^4 \CE \ar[r] & \Sym^5 \CE \ar[r] & \CO_\Sigma (5j^*E_Y) \ar[r] & 0 \quad (**)  
}
$$
Here $(*)$ is obtained by tensoring \eqref{eq: ses} with $\Sym^4 \CE$, and $(**)$ is obtained by pushing forward by $q$ on 
$$
0 \to \CO_Y(4E_Y) \to \CO_Y(5E_Y) \to \CO_{E_Y}(5E_Y) \to 0
$$
as well as $R^1 q_*\CO_Y(4E_Y) = 0$. The splitting of \eqref{eq: ses} is equivalent to the splitting of $(*)$. Thus to prove the lemma, it suffices to prove that $(**)$ splits, because by composing $h$, a splitting morphism $\Sym^5 \CE \to \Sym^4 \CE$ gives a splitting morphism $(\Sym^4 \CE) \otimes \CE \to \Sym^4 \CE$.

Suppose $(**)$ does not split. Then the extension class 
$$
[\Sym^5 \CE] \in \mathrm{Ext}^1 (\CO_\Sigma (5j^*E_Y), \Sym^4 \CE) = \mathrm{Ext}^1 (\CO_\Sigma, \Sym^4 \CE \otimes \CO_\Sigma (-5j^*E_Y))
$$ 
is nonzero. By \cite[III. Ex. 6.1]{Hartshorne}, the map 
$$
H^0(\Sigma, \CO_\Sigma) \to H^1 (\Sigma, \Sym^4 \CE \otimes \CO_\Sigma (-5j^*E_Y))
$$ 
is nonzero, thus injective. From $(**)$, we deduce that
$$
h^0(\Sigma, \Sym^4 \CE \otimes \CO_\Sigma (-5j^*E_Y)) = h^0(\Sigma, \Sym^5 \CE \otimes \CO_\Sigma (-5j^*E_Y)).
$$
Together with the projection formula and \eqref{eq: E_Y}, we have 
$$
h^0(Y, 4E_Y + 10 q^*(s - p^*A_2)) = h^0(Y, 5E_Y + 10 q^*(s - p^*A_2)).
$$
In particular, $E_Y \subseteq \baselocus|5E_Y + 10 q^*(s - p^*A_2)|$. On the other hand, by Lemma \ref{lem: branch locus} (2) and Lemma \ref{lem: D'}, 
$$
5E_Y + 10 q^*(s - p^*A_2) \sim 5E_Y + 2q^*(5s + p^*(A_1 - 2A_2 - \det \CE_B - T)) \sim D'.
$$
Thus $E_Y$ is an irreducible component of $D'$. This is a contradiction. As a result, $(**)$ splits, so does \eqref{eq: ses}. 
\end{proof}

\begin{lemma} \label{lem: A=T}
We have $A_1 = T$. Thus $A_2 = A_0$.
\end{lemma}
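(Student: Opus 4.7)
The plan is to show $A_1\sim T$ on $B$; the consequence $A_2\sim A_0$ follows at once from $A_2=A_0+T-A_1$. Consider the rank-$2$ bundle $V := p_{0,*}\CO_{E_0}(s)$ on $B$. The blow-up structure $E_0 = \PP_B(N^\vee_{\Gamma_0/X_0})$ combined with $E_0|_{E_0} = -s + p_0^*A_2$ gives $V \cong N^\vee_{\Gamma_0/X_0}\otimes \CO_B(A_2)$. Separately, since $M_0 = \psi^*\CO_\Sigma(s_\Sigma + p^*T)$ and $\psi|_{E_0}$ is an isomorphism onto $\Sigma = \PP_B(\CE_B)$, applying $p_{0,*}$ to $M_0|_{E_0} = s + p_0^*A_1$ yields $V \cong \CE_B\otimes \CO_B(T - A_1)$. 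So the lemma reduces to proving $V\cong \CE_B$ as rank-$2$ bundles, with trivial twist.

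Two preliminary computations sharpen this. By the $\PP^1$-bundle canonical formula $K_{E_0} = -2s + p_0^*\det V$ combined with the adjunction identity $K_{E_0}=(K_{X_1}+E_0)|_{E_0} = -2s + p_0^*(A_0+2A_2)$, one gets $\det V = A_0+2A_2$; substituting both descriptions of $V$ and $A_2=A_0+T-A_1$ forces $\det\CE_B \sim 3A_0$. Next, construct the differentiation map $d\colon \CE_B \to N^\vee_{\Gamma_0/X_0}\otimes \CO_B(A_0)$: every section of $\omega_{X_0}$ vanishes along the base locus section $\Gamma_0$, and its normal derivative along $\Gamma_0$ lies in $N^\vee_{\Gamma_0/X_0}\otimes \gamma_0^*\omega_{X_0/B}$. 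Both source and target have determinant $3A_0$; because $d$ is generically injective (the base point of $|K_{F_b}|$ is a reduced simple base point), a degree count rules out any cokernel, so $d$ is an isomorphism. This yields $N^\vee_{\Gamma_0/X_0}\cong \CE_B\otimes \CO_B(-A_0)$, and in particular $V \cong \CE_B\otimes \CO_B(A_2-A_0)$.

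So far one only recovers the tautology $A_2-A_0 = T-A_1$, so a further constraint is needed. Lemma~\ref{lem: D'} applied to the trivial restriction $D'|_{E_Y}$ gives $4(T-A_1)\sim 0$ in $\Pic(B)$, cutting the ambiguity down to $4$-torsion. To upgrade this to $T-A_1\sim 0$, I would match canonical sub-line-bundle structures: since $H^1(B,\CO_B(A_0))=0$, the sequence $0\to \CO_B(2A_0)\to \CE_B\to \CO_B(A_0)\to 0$ splits, with the sub $\CO_B(2A_0)$ corresponding to sections of $\omega_{X_0}$ vanishing to order $\ge 2$ along $\Gamma_0$; the bundle $V$ (via $N^\vee$ and $d$) carries a parallel canonical sub, and forcing $d$ to intertwine these two canonical subs pins down the twist $\CO_B(T-A_1)$ to be trivial.

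The main obstacle I anticipate is precisely this last rigidification: the projective-bundle isomorphism $\PP(V)\cong \PP(\CE_B)$ only determines $V$ up to a line-bundle twist, and Lemma~\ref{lem: D'} only cuts this down to $4$-torsion. Extracting $T-A_1\sim 0$ exactly requires delicate bookkeeping with the canonical sub/quotient structures of $\CE_B$ and $V$, and may additionally need the explicit form of the branch divisor $D'$ from Lemma~\ref{lem: branch locus} to exclude the remaining $4$-torsion possibilities.
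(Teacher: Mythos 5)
Your argument does not close, and you yourself identify the fatal gap: after all the reductions you only obtain $4(T-A_1)\sim 0$ in $\Pic(B)$, not $T-A_1\sim 0$. The two computations of $V=p_{0,*}\CO_{E_0}(s)$ (via $N^\vee_{\Gamma_0/X_0}$ and via $\CE_B$) cancel tautologically, as you note, because $A_2-A_0=T-A_1$ by definition; the determinant comparison does yield $\det\CE_B\sim 3A_0$ and hence the correct degree $\deg A_1=t$, and Lemma \ref{lem: D'} cuts the remaining ambiguity down to $4$-torsion. But the decisive step --- showing that this torsion class is actually trivial --- is only sketched as a hope (``match canonical sub-line-bundle structures''), with no construction of the claimed sub $\CO_B(2A_0)\subset\CE_B$, no verification that your differentiation map $d$ respects it, and no mechanism explaining why such a matching would kill a nontrivial $4$-torsion twist rather than merely constrain it further. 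On a genus-one curve $4$-torsion line bundles abound, so this is a genuine obstruction and not bookkeeping; as written the proposal proves strictly less than the lemma.

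The paper sidesteps the torsion issue entirely by computing $(a_1)_*\omega_{X_1}$ in two ways. From $K_{X_1}=f^*\left(E_Y+q^*(s+p^*(A_1-T))\right)$, the decomposition $f_*\CO_{X_1}=\CO_Y\oplus\CO_Y(-L)$ together with $q_*\CO_Y(E_Y-L)=0$ (a consequence of Lemma \ref{lem: branch locus}) and the splitting $\CE=\CO_\Sigma\oplus\CO_\Sigma(-2s+2p^*A_2)$ from Lemma \ref{lem: split} give $\psi_*\omega_{X_1}=\CO_\Sigma(s+p^*(A_1-T))\oplus\CO_\Sigma(-s+p^*(A_1+2A_2-T))$, whence $(a_1)_*\omega_{X_1}=\CE_B\otimes\CO_B(A_1-T)$; comparing with $(a_1)_*\omega_{X_1}=(a_0)_*\omega_{X_0}=\CE_B$ pins the twist down exactly. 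If you want to rescue your route, you need an input of precisely this strength, and the natural candidate is this pushforward comparison itself --- at which point the normal-bundle and differentiation-map analysis becomes superfluous.
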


\begin{proof}
Recall that $K_{X_1} = f^*(E_Y + q^*(s + p^*(A_1 - T)))$. By the projection formula, we have
$$
\psi_* \omega_{X_1} = \CO_\Sigma (s + p^*(A_1 - T)) \otimes q_* \left( \CO_Y (E_Y) \oplus \CO_Y(E_Y - L) \right).
$$
By Lemma \ref{lem: branch locus} (1), it is clear that $q_*\CO_Y(E_Y - L) = 0$. Thus by Lemma \ref{lem: split}, we have
\begin{align*}
	\psi_* \omega_{X_1} & = \CO_\Sigma (s + p^*(A_1 - T)) \otimes q_* \CO_Y (E_Y) \\
	& = \CO_\Sigma (s + p^*(A_1 - T)) \oplus \CO_\Sigma (-s + p^*(A_1 + 2A_2 - T)).
\end{align*}
Therefore, it follows from the projection formula again that
$$
(a_1)_* \omega_{X_1} = p_* (\psi_* \omega_{X_1}) = p_* \CO_\Sigma (s + p^*(A_1 - T)) = (a_0)_* \omega_{X_0} \otimes \CO_B(A_1 - T).
$$
As a result, we deduce that $A_1 = T$, and thus $A_2 = A_0 + T - A_1 = A_0$.
\end{proof}

Now we are ready to state the main theorem of this section. 

\begin{theorem} \label{thm: classification}
Let $X_0$ be a canonical and irregular $3$-fold of general type with $K_{X_0}^3 = \frac{4}{3} \chi(\omega_{X_0})$. Let $a_0: X_0 \to B$ be the Albanese fibration of $X_0$, where $B$ is a smooth curve of genus one. Let $X_1$ be the blow-up of $X_0$ along the base-locus section $\Gamma_0$ of $a_0$. Then the Albanese fibration $a_1: X_1 \to B$ of $X_1$ is factorized as
$$
a_1: X_1 \stackrel{f} \longrightarrow Y \stackrel{q} \longrightarrow \Sigma \stackrel{p} \longrightarrow B
$$
with the following properties:
\begin{itemize}
	\item [(1)] $\Sigma = \PP_B(\CE_B)$ with $p: \Sigma \to B$ the projection, where $\CE_B = (a_0)_* \omega_{X_0}$.
	
	\item [(2)] $Y = \PP_\Sigma (\CO_\Sigma \oplus (\CO_\Sigma (-2) \otimes \CK_1^2))$ with $q: Y \to \Sigma$ the projection, where $\gamma_0: B \to X_0$ corresponds to the section $\Gamma_0$ and $\CK_1 = p^*(\gamma_0^*\omega_{X_0})$.
	
	\item [(3)] $f: X_1 \to Y$ is a flat double cover with the branch locus 
	$$
	D = D_1 + D_2,
	$$ 
	where $D_1 \in |\CO_Y(1)|$, $D_2 \in |\CO_Y(5) \otimes q^* (\CO_\Sigma (10) \otimes \CK_1^{-4} \otimes \CK_2^{-2})|$, and $D_1 \cap D_2 = \emptyset$.	Here $\CK_2 = p^*(\det \CE_B)$.
\end{itemize}
\end{theorem}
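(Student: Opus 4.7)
The plan is to assemble the structure from the preceding lemmas, which have done essentially all of the heavy lifting. The factorization $a_1: X_1 \stackrel{f}\to Y \stackrel{q}\to \Sigma \stackrel{p}\to B$ and the identification $\Sigma = \PP_B(\CE_B)$ with $\CE_B = (a_0)_*\omega_{X_0}$, which gives (1), have already been constructed in the setup preceding the theorem; and $f: X_1 \to Y = \PP_\Sigma(\CE)$ is realised as the relative canonical map of $\psi$, which is a flat double cover because $\CE$ is locally free of rank $2$ by Lemma \ref{lem: rank 2 bundle} and each fiber of $\psi$ is an integral Gorenstein curve of arithmetic genus $2$ by Lemma \ref{lem: flatness}.

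For (2), I would invoke Lemma \ref{lem: split} and Lemma \ref{lem: A=T} in tandem: the former yields $\CE \cong \CO_\Sigma \oplus \CO_\Sigma(-2s + 2p^*A_2)$, and the latter identifies $A_2 = A_0 = \gamma_0^*K_{X_0}$. Under the conventions $\CO_\Sigma(1) := \CO_\Sigma(s)$ (tautological on the $\PP^1$-bundle $\Sigma/B$) and $\CK_1 := p^*(\gamma_0^*\omega_{X_0}) = \CO_\Sigma(p^*A_0)$, this reads $\CE \cong \CO_\Sigma \oplus (\CO_\Sigma(-2) \otimes \CK_1^{2})$, so $Y = \PP_\Sigma(\CE)$ has the asserted form.

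For (3), set $D_1 := E_Y$, which lies in $|\CO_Y(1)|$ because the $\PP^1$-bundle structure on $Y$ was built from $q_*\CO_Y(E_Y) = \CE$. Write $D_2 := D'$, the complement of $E_Y$ in the branch divisor $D = E_Y + D'$. Lemma \ref{lem: branch locus}(2), combined with the substitutions $A_1 = T$ and $A_2 = A_0$ from Lemma \ref{lem: A=T}, gives
\[
D_2 \sim 5E_Y + q^*\bigl(10s - 4p^*A_0 - 2p^*\det\CE_B\bigr),
\]
which translates into $D_2 \in |\CO_Y(5) \otimes q^*(\CO_\Sigma(10) \otimes \CK_1^{-4} \otimes \CK_2^{-2})|$ with $\CK_2 = p^*\det\CE_B$. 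The disjointness $D_1 \cap D_2 = \emptyset$ will follow from Lemma \ref{lem: D'}: since $D'|_{E_Y}$ is an effective divisor on $E_Y \cong \Sigma$ which is linearly equivalent to zero, and since $h^0(\Sigma, \CO_\Sigma) = 1$, the divisor $D'|_{E_Y}$ must be identically zero, i.e., $D'$ is set-theoretically disjoint from $E_Y$.

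In short, the theorem is largely a bookkeeping assembly, and the only delicate point internal to its proof is the disjointness argument, which is cheap. The real obstacle in the broader strategy was Lemma \ref{lem: split} (the splitting of the extension \eqref{eq: ses}): without this splitting, $\CE$ would be a nontrivial Atiyah-type extension and (2) would fail. That splitting is proved via a symmetrising-morphism trick, reducing to the observation that non-splitting would force $E_Y$ to appear as a component of $D'$, contradicting the very definition of $D'$.
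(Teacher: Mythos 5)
Your proposal is correct and follows essentially the same route as the paper: (1) is definitional, (2) is Lemma \ref{lem: split} combined with $A_2 = A_0$ from Lemma \ref{lem: A=T}, and (3) is the substitution $A_1 = T$, $A_2 = A_0$ into Lemma \ref{lem: branch locus}(2). Your explicit derivation of $D_1 \cap D_2 = \emptyset$ from Lemma \ref{lem: D'} (an effective divisor linearly equivalent to zero on $E_Y \cong \Sigma$ must vanish) is a correct and welcome detail that the paper's own proof leaves implicit.
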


\begin{proof}
The statement (1) is from the definition. By Lemma \ref{lem: A=T}, $A_2 = A_0$. Thus (2) just follows from Lemma \ref{lem: split}. Now $\CO_Y(1) = \CO_Y(E_Y)$. Moreover, by Lemma \ref{lem: A=T},
$$
5s + p^*(A_1 - 2A_2 - \det \CE_B - T)  = 5s - p^*(2A_0 + \det \CE_B).
$$
Thus (3) follows from Lemma \ref{lem: branch locus} (2).
\end{proof}


\bibliography{Ref_NoetherSeveri3fold}
\bibliographystyle{amsplain}

\end{document}